\def\aset{{\Lambda}}
\def\BV{\mathrm{BV}}
\def\C{{\mathbb{C}}}
\def\Cc{{\mathcal{C}}}
\def\curve{{\gamma}}
\def\d{\,{\mathrm{d}}}
\def\diam{{\mathrm{diam}}}
\def\dist{{\mathrm{dist}}}
\def\dirac{{\delta}}
\def\eps{{\varepsilon}}
\def\ffi{{\varphi}}
\def\ess{\mathrm{ess\,sup}}
\def\Ff{{\mathcal{F}}}
\def\gap{{\mathrm{gap}}}
\def\Hh{{\mathcal{H}}}
\def\N{{\mathbb{N}}}
\def\R{{\mathbb{R}}}
\def\Rr{{\mathcal{R}}}
\def\sep{{\eta}}
\def\supp{{\mathrm{supp}}}
\def\traj{{\Gamma}}
\def\var{{\mathrm{var}}}
\def\Z{{\mathbb{Z}}}
\newcommand{\weakconv}{\xrightarrow{w}}
\newcommand{\norm}[1]{{\|{#1}\|}}
\newcommand{\abs}[1]{{|{#1}|}}
\newcommand{\scal}[1]{{\langle{#1}\rangle}}
\newtheorem{lemma}{Lemma}[section]
\newtheorem{proposition}[lemma]{Proposition}
\newtheorem{theorem}[lemma]{Theorem}
\newtheorem{corollary}[lemma]{Corollary}
\theoremstyle{definition}
\newtheorem{definition}[lemma]{Definition}
\theoremstyle{remark}
\newtheorem{remark}[lemma]{Remark}
\title{The Nyquist sampling rate for spiraling curves}
\author[Ph. Jaming, F. Negreira \& J. L. Romero]{Philippe Jaming, Felipe Negreira \& José Luis Romero}
\address{Univ. Bordeaux, IMB, UMR 5251, F-33400 Talence, France. CNRS, IMB, UMR 5251, F-33400 Talence, France.}
\email{philippe.jaming@math.u-bordeaux.fr}
\email{felipe.negreira@math.u-bordeaux.fr}
\address{Faculty of Mathematics, University of Vienna, Oskar-Morgenstern-Platz 1, 1090 Vienna, Austria\\and\\Acoustics Research Institute, Austrian Academy of Sciences, Wohl\-leben\-gasse 12-14, 1040 Vienna, Austria}
\email{jose.luis.romero@univie.ac.at, jlromero@kfs.oeaw.ac.at}
\begin{document}
\begin{abstract}
We consider the problem of reconstructing a compactly supported function from samples of its Fourier transform taken along a spiral. We determine the Nyquist sampling rate in terms of the density of the spiral and show that, below this rate, spirals suffer from an approximate form of aliasing. This sets a limit to the amount of undersampling that compressible signals admit when sampled along spirals. More precisely, we derive a lower bound on the condition number for the reconstruction of functions of bounded variation, and for functions that are sparse in the Haar wavelet basis. 
\end{abstract}

\keywords{Sampling Theory, nyquist rate, undersampling, approximate aliasing}
\subjclass[2010]{42C15,42C30,65T60,94A12,94A20}
\maketitle

\section{Introduction}

\subsection{The mobile sampling problem}

In this article, we consider the reconstruction of a compactly supported function from samples of its Fourier transform taken along certain curves, that we call spiraling. This problem is relevant, for example, in magnetic resonance imaging (MRI), where the anatomy and physiology of a person are captured by moving sensors.

The Fourier sampling problem is equivalent to the sampling problem for bandlimited functions - that is, functions whose Fourier transform are supported on a given compact set. The most classical setting concerns functions of one real variable with Fourier transform supported on the unit interval $[-1/2,1/2]$, and sampled on a grid $\sep\mathbb{Z}$, with $\sep>0$. The sampling rate $\sep$ determines whether every bandlimited function can be reconstructed from its samples: reconstruction fails if $\sep>1$ and succeeds if $\sep\leqslant1$ \cite{unser2000sampling}. The transition value $\sep=1$ is known as the Nyquist sampling rate, and it is the benchmark for all sampling schemes: modern sampling strategies that exploit the particular structure of a certain class of signals are praised because they achieve sub-Nyquist sampling rates.

The sampling theory for bandlimited functions extends to high dimension and irregular sampling geometries \cite{B1,DS,landau1967sampling}, and it is instrumental in the analysis of sampling schemes arising from continuous curves \cite{BW}. The key notion is the Beurling density of a set, which measures the average number of samples per unit volume.

Beurling's density, however, does not properly reflect the acquisition cost when samples are taken along continuous trajectories. In this case, a more relevant metric is the average {\it length} covered by a curve, as a proxy for scanning times \cite{MR3579696,MR3267158,singh2006active,uv2,uv1}. For example, when sampling a function bandlimited to a compact set $\Omega\subset\R^2$ along equispaced parallel lines with direction $\vec{v}\in\mathbb{S}^1$,
\begin{equation*}
L_{\vec{v},\sep}=\{t\vec{v}+\sep k\vec{v}_\perp:t\in\R, k \in \Z\},
\end{equation*}
the critical sampling rate is dictated by the separation between lines $\sep>0$, and by the measure of the maximal cross section of $\Omega$ by hyperplanes perpendicular to $\vec{v}$ \cite{uv2,uv1}. With the introduction of an adequate notion of {\it path-density}, similar results hold also for arbitrary families of parallel lines \cite{gruv15}.

The analysis of general sampling trajectories in terms of length and density is very subtle and challenging, 
and little can be said in full generality \cite{gruv15}. Nevertheless, a solution to the mobile sampling 
problem is expected to be possible for concrete parametric families of curves.

\subsection{Necessary and sufficient recovery guarantees for spiraling curves}
A first contribution of this article is to give sharp necessary and sufficient conditions for Fourier sampling for a parametric family of curves, that we call spiraling. The main examples of these curves are the Archimedes spiral
\begin{equation}
\label{spiral}A^\sep:=\{(\sep\theta\cos2\pi\theta,\sep\theta\sin2\pi\theta):\theta\geqslant0\}
\end{equation}
and the collection of concentric circles
\begin{equation}
\label{circles}O^\sep:=\{(x,y):x^2+y^2=\sep^2k^2, k\in\N\},
\end{equation}
see Figure \ref{spiralandcircle}.
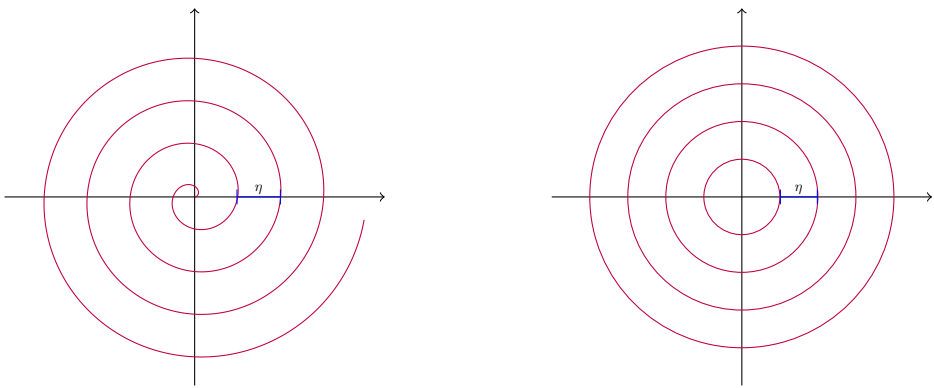
\begin{figure}[ht]
\begin{tikzpicture}[scale=0.5, every node/.style={scale=0.5}]
\draw[->](-5,0)--(5,0);
\draw[->](0,-5)--(0,5);
\draw[purple,domain=0:25,variable=\t,smooth,samples=500]plot({\t r}: {0.18*\t});
\draw[|-|,blue](1.1,0)--(2.27,0);
\node at (1.685,0.2) {$\sep$};
\end{tikzpicture}
\hspace*{5em}
\begin{tikzpicture}[scale=0.5, every node/.style={scale=0.5}]
\draw[->](-5,0)--(5,0);
\draw[->](0,-5)--(0,5);
\draw[purple](0,0)circle(1);
\draw[purple](0,0)circle(2);
\draw[purple](0,0)circle(3);
\draw[purple](0,0)circle(4);
\draw[|-|,blue](1,0)--(2,0);
\node at (1.5,0.2) {$\sep$};
\end{tikzpicture}
\caption{Archimedes spiral (left) and concentric circles (right) with separation $\sep$.}
\label{spiralandcircle}
\end{figure}

We identify the precise Nyquist rate of these curves in terms of the density parameter $\sep$. To be specific, we say that $\traj$, the image of a curve, is a {\it Fourier sampling trajectory} for $\Omega$ - or a {\it sampling trajectory} for the Paley-Wiener space $PW^2(\Omega)$ - if the following continuous sampling inequality holds:
\begin{equation}
\label{eq_cont_samp}A\norm{f}^2_2\leqslant\int_{\traj}\abs{\widehat{f}(\xi)}^2\d\Hh^1(\xi)\leqslant B\norm{f}_2^2,\qquad f\in L^2(\Omega),
\end{equation}
where $A,B>0$ are stability constants, and $\Hh^1$ is the one dimensional Hausdorff (length) measure (see \cite{MR3528398,MR1676043}). Equivalently, $\traj$ is a sampling trajectory if it contains a discrete sampling set (see Section \ref{sec_overview}).

Our first result reads as follows.

\newtheorem*{thmconcrete}{\bf Theorem A}
\begin{thmconcrete}{\em 
Let $\Omega\subset\R^2$ be a convex centered symmetric body.
\begin{enumerate}
\item If $\diam(\Omega)\sep<1$, then the Archimedes spiral $A^\sep$ and the collection of concentric circles $O^\sep$ are sampling trajectories for $PW^2(\Omega)$.

\item If $\diam(\Omega)\sep>1$, then neither the Archimedes spiral $A^\sep$ nor the collection of concentric circles $O^\sep$ are sampling trajectories for $PW^2(\Omega)$.
\end{enumerate}}
\end{thmconcrete}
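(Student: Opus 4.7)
The approach is to reduce the two-dimensional sampling problem to a family of one-dimensional sampling problems along radial lines, exploiting the fact that both $O^\sep$ and $A^\sep$ cut each line through the origin in an arithmetic progression of spacing $\sep$. For each angular direction $\theta$, the intersections of $O^\sep$ with the ray $\{r\vec{v}_\theta:r\geqslant 0\}$ form $\{\sep k\vec{v}_\theta:k\in\N\}$, while those of $A^\sep$ form the shifted progression $\{\sep(k+\theta/(2\pi))\vec{v}_\theta:k\geqslant 0\}$; combining with the antipodal direction, each line through the origin is sampled at spacing $\sep$ (up to a bounded shift in the spiral case). The key link is that, for $f\in L^2(\Omega)$, the radial slice $g_\theta(r):=\widehat{f}(r\vec{v}_\theta)$ coincides with the 1D Fourier transform of the orthogonal projection $p_\theta$ of $f$ onto $\R\vec{v}_\theta$, and, since $\Omega$ is convex and centered symmetric, $p_\theta$ is supported in $[-\diam(\Omega)/2,\diam(\Omega)/2]$.

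\textbf{Sufficiency.} When $\sep\diam(\Omega)<1$, $\supp p_\theta$ sits strictly inside $[-1/(2\sep),1/(2\sep)]$, and the 1D Shannon--Whittaker identity (with a Kadec-type perturbation bound for the $\theta$-dependent spiral shift, uniform in $\theta$) yields a frame inequality with absolute constants between $\norm{p_\theta}_{L^2(\R)}^2$ and $\sep\sum_k\abs{g_\theta(\sep(k+\theta/(2\pi)))}^2$. Combining this with Plancherel in polar coordinates, $\norm{f}_2^2=\int_0^{2\pi}\int_0^\infty\abs{g_\theta(r)}^2 r\,\d r\,\d\theta$, the arc-length factor $\sep k\,\d\theta$ on the $k$-th circle (or spiral winding, which differs from a circle only by a bounded perturbation) reconstructs the trajectory integral $\int_\traj\abs{\widehat{f}}^2\,\d\Hh^1$ up to absolute constants. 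The essential trick is that the $r$-factor in the polar measure matches the $\sep k$-factor from arc length, turning the discrete sum into a Riemann sum for the polar integrand, whose accuracy is controlled by the Paley--Wiener bandwidth of $g_\theta$.

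\textbf{Necessity.} When $\sep\diam(\Omega)>1$, I would construct a sequence $f_n\in L^2(\Omega)$ with $\norm{f_n}_2=1$ and $\int_\traj\abs{\widehat{f_n}}^2\,\d\Hh^1\to 0$. Fix $\vec{u}\in\mathbb{S}^1$ realizing $\diam(\Omega)$. The 1D Nyquist theorem supplies a nontrivial $\phi\in L^2(\R)$ with $\supp\phi$ of length at most $\diam(\Omega)$ and $\widehat{\phi}$ vanishing on the lattice $\sep\Z$. Setting $f_n(x)=\phi(\vec{u}\cdot x)\,\psi_n(\vec{u}^\perp\cdot x)$, with $\psi_n$ a bump concentrating at scale $1/n$ to keep $\supp f_n\subset\Omega$, one obtains $\widehat{f_n}(\xi)=\widehat{\phi}(\vec{u}\cdot\xi)\,\widehat{\psi_n}(\vec{u}^\perp\cdot\xi)$, localized in a vanishing-width tube around the line $\R\vec{u}$. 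At each intersection of $\traj$ with this tube, $\widehat{\phi}$ is small---vanishing on the axis $\R\vec{u}$ and controlled by Taylor expansion near it---while a counting estimate for the number of such crossings inside the tube shows that the total contribution to $\int_\traj\abs{\widehat{f_n}}^2\,\d\Hh^1$ vanishes as $n\to\infty$.

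\textbf{Main obstacle.} The delicate point for sufficiency is the interplay between the radial polar measure $r\,\d r\,\d\theta$ and the arc-length measure $\d\Hh^1$ on $\traj$: the two agree only through a Riemann-sum approximation whose error must be controlled by Paley--Wiener smoothness and whose uniformity in $\theta$ requires a Kadec-$\tfrac{1}{4}$ type argument for the $\theta$-dependent spiral shift. For necessity, the challenge is the tube-counting estimate: $\traj$ meets any neighborhood of the line $\R\vec{u}$ in infinitely many transverse crossings, and one must show that their cumulative contribution to $\int_\traj\abs{\widehat{f_n}}^2\,\d\Hh^1$ vanishes as $n\to\infty$, necessarily using the quantitative decay of $\widehat{\psi_n}$ off the axis.
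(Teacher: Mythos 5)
Your proposal departs from the paper on both halves: the paper proves (i) from the fact that $\gap(A^\sep)=\gap(O^\sep)=\sep/2$, combined with Beurling's gap-covering theorem and the discrete-versus-continuous equivalence of Theorem D, and proves (ii) by showing that translates of the trajectory converge weakly to a set of equispaced parallel lines (Lemma 4.1) and then invoking Beurling's weak-limit characterization of sampling. Both of your arguments, however, contain gaps that I do not see how to close.

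For sufficiency, the Fourier-slice reduction is fine, but the inequality you actually need is a \emph{weighted} one: writing the trajectory integral and $\norm{f}_2^2$ in polar coordinates, what must be compared is $\int_\R|g_\theta(r)|^2\,|r|\,\d r$ with $\sum_k|g_\theta(\sep k)|^2\,\sep|k|$, with the polar weight $|r|$ present on both sides. The classical Shannon/Kadec theorem controls only the unweighted quantity $\int_\R|g_\theta|^2\asymp\sep\sum_k|g_\theta(\sep k)|^2$, and the step from the unweighted to the weighted statement is where the work lies. Calling it a ``Riemann sum'' is not a substitute for a proof: the error in a quadrature estimate $|\int G-\sep\sum_k G(\sep k)|\lesssim\sep\norm{G'}_1$ is generically comparable to $\int G$ itself when $g_\theta$ has band near the critical value $1/\sep$ (so that $G:=|g_\theta|^2\,|r|$ oscillates precisely at scale $\sep$), and $r\,g_\theta(r)$ is not bandlimited, so the exact frame identity does not transfer. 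A sampling inequality is a scale-covariant frame statement, not an asymptotic quadrature bound; the weight $|r|$ breaks that covariance and has to be handled by a separate argument, which the proposal does not supply.

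For necessity, the construction has the scaling backwards. Concentrating $\psi_n$ at spatial scale $1/n$ makes $\widehat{\psi_n}$ \emph{spread} at frequency scale $n$, with $\norm{\widehat{\psi_n}}_\infty\asymp n^{-1/2}$; thus $\widehat{f_n}$ occupies a \emph{widening} tube around $\R\vec{u}$, not a vanishing-width one, and more and more of the trajectory becomes relevant. The vanishing of $\widehat{\phi}$ on $\sep\Z$ kills the integrand only at the isolated crossings of each circle (or winding) with $\R\vec{u}$. Carrying out the estimate on $O^\sep$, circle $k$ contributes roughly $\sep k\int_0^{2\pi}|\widehat{\phi}(\sep k\cos\theta)|^2\,n^{-1}|\widehat{\psi}(\sep k\sin\theta/n)|^2\,\d\theta$; for $k\lesssim n$ the $\widehat{\psi}$ factor is $\asymp n^{-1}$ over the entire circle, the angular integral of $|\widehat{\phi}(\sep k\cos\theta)|^2$ is of order $1/k$, so each such circle contributes $\asymp n^{-1}$ and the roughly $n$ circles with $k\lesssim n$ sum to $O(1)$, not $o(1)$. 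To make an aliasing construction of this type actually vanish, one must translate the bump far out into the escape cone, where the spiral is well approximated by parallel lines on the bump's support -- which is exactly what the paper does, softly via weak limits (Lemma 4.1, Proposition 4.2) for Theorem A, and quantitatively in Section 5 for Theorems B and C. Without the translation, the $n^{-1/2}$ normalization of $\widehat{\psi_n}$ is exactly cancelled by the growing number of relevant windings.
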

Part $(i)$ in Theorem A is due to Benedetto and Wu in the context of pointwise sampling \cite{BW}. Our contribution is mainly in $(ii)$.

\subsection{Compressible signals and sampling below the Nyquist rate}
Having identified the Nyquist rate of spiraling curves, we look into undersampling. Modern sampling schemes exploit the fact that many signals of interest are highly compressible, and this information is leveraged to sample below the Nyquist rate. For example, functions defined on the unit square, and obeying a variation bound
\begin{equation*}
\Ff(W):=\{f\in L^2([-1/2,1/2]^2):\var(f)\leqslant W\}
\end{equation*}
are compactly represented in a wavelet basis. Here, the resolution parameter $W$ essentially controls the 
number of active wavelet coefficients \cite{MR1738406, MR2241189}.
\footnote{See
\eqref{eq_def_var} for the definition of $\var(f)$.}

The stability of sampling schemes restricted to such signals is expressed by the inverse condition number \footnote{In standard terminology, the condition number of the sampling problem $f|\traj \mapsto f$ is related to the {\it reciprocal} of this quantity.}
\begin{equation*}
\inf\{\norm{\hat{f}-\hat{g}}_{L^2(\mu_\traj)}:f,g\in\Ff(W)\text{, }\norm{f-g}_2=\eps\},
\end{equation*}
where $\mu_\traj := \Hh^1|_{\traj}$ is the arc-measure, which measures the extent to which the distance between two signals is exhibited by their Fourier samples. Since, $\eps^{-1}\Ff(W)=\Ff(\eps^{-1} W)$, and
$\Ff(W)\subset\Ff(W)-\Ff(W)\subset\Ff(2W)$, the analysis of the condition number for small $\eps$ reduces to the large $W$ asymptotics of the {\it stability margin}:
\begin{equation*}
A(\traj,\Ff(W)):=\inf\{\norm{\hat{f}}_{L^2(\mu_\traj)}:\norm{f}_2=1\text{, }f\in \Ff(W)\}.
\end{equation*}
According to Theorem A, the critical value for the reconstruction of functions defined on the unit square with either $A^\sep$ or $O^\sep$ is $\sep={\sqrt{2}}/{2}$. We consider spirals with density slightly under the critical value and prove the following.

\newtheorem*{thmbv}{\bf Theorem B}
\begin{thmbv}{\em
Let $\sep=(1+\eps)\sqrt{2}/2$ with $\eps\in(0,1)$, and $\traj=A^\sep$ or $\traj=O^\sep$. Then for $W>0$,
\begin{equation*}
A(\traj,\Ff(W))\leqslant C(\eps W)^{-1/2}(\ln^2(\eps W)+1),
\end{equation*}
where $C>0$ is a universal constant.}
\end{thmbv}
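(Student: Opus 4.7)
The plan is to exhibit an explicit approximate-aliasing function $f_W \in \Ff(W)$ with $\norm{f_W}_2 = 1$ realizing the upper bound on the stability margin. The starting observation is that the diagonal direction of the unit square $\Omega = [-1/2, 1/2]^2$ has maximal extent $\sqrt 2 = \diam(\Omega)$, matching precisely the reciprocal of the sampling threshold $1/\sep = \sqrt 2/(1+\eps)$; the undersampling by the factor $1+\eps$ thus admits an essentially one-dimensional aliasing phenomenon along this diagonal, which I would lift to $\R^2$ by tensoring with a transverse bump.

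In rotated coordinates $s = (x_1+x_2)/\sqrt 2$ and $t = (x_1-x_2)/\sqrt 2$, with dual coordinates $\sigma,\tau$, I first construct a 1D aliasing function $\phi(s) = \phi_1(s) - \phi_1(s - 1/\sep)$, where $\phi_1(s) = c\cos(2\pi m s/\tilde\eps)\chi_{[0,\tilde\eps]}(s)$ with $\tilde\eps \asymp \eps$ chosen so both translates fit into a maximal diagonal slice of $\Omega$, and $c$ chosen so that $\norm{\phi}_{L^2} = 1$. The modulation parameter $m \lesssim W$ is used to tune the variation. By construction, $\hat\phi(\sigma) = \hat\phi_1(\sigma)(1 - e^{-2\pi i \sigma/\sep})$ vanishes on $\sep\Z$, and $\hat\phi_1$ is a sinc-type envelope of width $1/\tilde\eps$ centered at $\sigma = \pm m/\tilde\eps$. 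The 2D lift $f_W(x) := \phi(s)\tilde\psi(t)$, where $\tilde\psi$ is an $L^2$-normalized smooth bump of transverse width $T \asymp \eps$ --- the maximum compatible with the support constraint $S + T \leq \sqrt 2/2$ for the rotated rectangle to fit inside $\Omega$ --- then satisfies $\norm{f_W}_2 \sim 1$, $\var(f_W) \lesssim m \lesssim W$, and $\hat{f_W}(\sigma,\tau) = \hat\phi(\sigma)\hat{\tilde\psi}(\tau)$ inherits the zeros of $\hat\phi$ on the lines $\sigma \in \sep\Z$.

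The central estimate is the bound on $\norm{\hat{f_W}}_{L^2(\mu_\traj)}^2$. Near each intersection of $\traj$ with the $\sigma$-axis at $(\pm\sep k, 0)$ --- the scenario for $\traj = O^\sep$, and approximately so for $\traj = A^\sep$ once per revolution --- the curve bends with curvature $\sim 1/(\sep k)$, so $\sigma - \sep k \approx -\tau^2/(2\sep k)$ along the curve. A Taylor expansion of $\hat\phi$ at its zeros yields $\abs{\hat\phi(\sigma)}^2 \lesssim \tau^4 \abs{\hat\phi_1(\sep k)}^2/(\sep^4 k^2)$, valid in the regime $k \gtrsim 1/\eps^2$ where the linearization of the exponential is legitimate. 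Integrating against $\abs{\hat{\tilde\psi}(\tau)}^2$ (effectively supported on $\abs{\tau} \lesssim 1/\eps$) and summing in $k$, split into the Taylor-valid regime $k \gtrsim 1/\eps^2$, the intermediate regime $1/(\sep\eps) \lesssim k \lesssim 1/(\sep^2\eps^2)$, and the small-$k$ regime $k \lesssim 1/(\sep\eps)$ --- corresponding respectively to the curvature, transverse, and envelope-peak scales --- yields after careful bookkeeping the claimed bound, with logarithmic factors emerging naturally from the intermediate-scale summation.

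The main obstacle is to handle the Archimedes spiral $A^\sep$ uniformly with $O^\sep$. For $A^\sep$ the intersections of the spiral with the $\sigma$-axis occur at radii $\sep(8k+1)/8$ rather than $\sep k$, so the zeros of $\hat\phi$ do not align exactly with the spiral's visits. Since $\hat{f_W}$ is concentrated in a narrow angular sector of aperture $\sim 1/m$ around $\pm\vec v$, only the short arcs of the spiral inside this sector contribute substantially, and on each such arc the spiral locally approximates a concentric circle of radius $\sim \sep k$ up to a phase shift that can be absorbed into a modified aliasing function $\tilde\phi$ designed to vanish on the spiral's shifted lattice. Summing contributions from all relevant revolutions, together with the regime splitting above, produces the $\ln^2(\eps W)$ factor and completes the bound.
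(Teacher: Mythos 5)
Your approach is in the right spirit and is genuinely different in execution from the paper's. You attack $\norm{\hat f_W}_{L^2(\mu_\traj)}$ directly by parametrizing the curve near each intersection with the aliasing lines and Taylor-expanding the local geometry (curvature $\sim 1/(\sep k)$ gives $\sigma-\sep k\approx -\tau^2/(2\sep k)$). The paper packages exactly this same geometric information as a Hausdorff rate-of-convergence lemma (Lemma~\ref{rate-spiral}: the translated spiral is within $O(R^2/n)$ of $\sep\Z\times\R$ in a box of size $R$) and then runs everything through the abstract class $\Cc(\sep,\lambda,\Omega)$ of Definition~\ref{class}, Lemma~\ref{fhat} and Proposition~\ref{prop_cube}. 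Your construction is the Fourier dual of theirs: your $\hat f_W=\hat\phi_1(\sigma)(1-e^{-2\pi i\sigma/\sep})\hat{\tilde\psi}(\tau)$ plays the role of the paper's $g_0(x)=\beta\sin(\pi x_1/\sep)\phi(\beta x)$ translated far out to $y$, with your modulation $m\sim W$ corresponding to their translation $\abs{y}$, and your transverse-width constraint $T\asymp\eps$ corresponding to their $\beta=\min(1/(2\sep),\sqrt2/2-1/(2\sep))\asymp\eps$.

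However, I see three concrete gaps. First, your envelope $\phi_1=c\cos(2\pi ms/\tilde\eps)\chi_{[0,\tilde\eps]}$ has a sinc-type Fourier transform with only $O(1/\abs{\sigma})$ decay. The paper deliberately chooses $\phi$ with subexponential decay $\abs{\phi(x)}\lesssim e^{-\abs{x}^{1/2}}$, so that after restricting to a box of side $R\sim\beta^{-1}\ln^2(\cdot)$ the tail error is geometrically negligible; this is exactly where the $\ln^2$ factor comes from. With a sinc envelope you only get polynomial tail decay, and it is not clear your ``intermediate-scale summation'' reproduces the $\ln^2(\eps W)$ factor rather than a power loss — you assert this but do not execute the bookkeeping, and the regimes $k\lesssim 1/(\sep\eps)$, $1/(\sep\eps)\lesssim k\lesssim 1/(\sep^2\eps^2)$, $k\gtrsim 1/\eps^2$ carry negligible energy once the envelope is centered at $k\sim W/\eps\gg 1/\eps^2$ (for $W$ large), so the provenance of the log factor is genuinely unclear. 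You would likely need to replace the sharp indicator by a smooth bump (as the paper does) and re-derive the $\var(f_W)$ bound, at which point the construction collapses onto the paper's. Second, the Archimedes spiral intersects the diagonal axis at shifted radii, and you propose to absorb the offset into ``a modified aliasing function $\tilde\phi$ designed to vanish on the spiral's shifted lattice''; this is correct in spirit, but you never construct $\tilde\phi$, nor do you verify that it still satisfies the support constraint and the variation bound uniformly in the offset. The paper handles this uniformly through the $\theta_0$ parameter in Lemma~\ref{rate-spiral} and the invariance under rotation (Remark~\ref{rem_rot}); your treatment needs an analogous uniform statement. Third, you never verify the Bessel bound part of the argument — the paper's Proposition~\ref{JM} and condition $(iii)$ of Definition~\ref{class} are used to pass from pointwise control of $g$ on the good box to a quantitative $L^2(\mu_\traj)$ estimate; a direct geometric argument must provide a substitute to control the $L^2(\mu_\traj)$ mass on the parts of the spiral far from the aliasing-line crossings, which the sinc tails make nontrivial.
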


Theorem B thus sets a limit to the capacity of spirals to acquire all compressible signals below the Nyquist rate. Informally, it says that when undersampling by a small factor $(1-\eps)$, one can only recover functions up to resolution $W\approx\eps^{-1}$ with a stable condition number.

A variant of Theorem B can be formulated in terms of the Haar wavelet. Let $\Sigma_{N,J}$ be the class of functions on $[-1/2,1/2]^2$ with $N$ non-zero Haar coefficients, all of them taken with scale at most $J$. We have the following estimate.

\newtheorem*{thmwav}{\bf Theorem C}
\begin{thmwav}{\em
Let $W\geqslant1$, $\sep=(1+\eps)\sqrt{2}/2$ with $\eps\in(0,1)$ and $\traj=A^\sep$ or $\traj=O^\sep$. Then for $N\geqslant1$,
\begin{equation*}
A(\traj,\Sigma_{N,J})\leqslant CN^{-1/6}\eps^{-1}\ln^4(C N),
\end{equation*}
where $J=C\ln(\eps^{-1}N)$ and $C>0$ is a universal constant.}
\end{thmwav}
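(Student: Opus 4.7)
The plan is to derive Theorem C from Theorem B by sparsifying the extremal bounded-variation witness provided by Theorem B in the two-dimensional Haar basis. The non-trivial direction here is that $BV$ functions admit quantitatively controlled sparse Haar approximations, and that this approximation interacts nicely with the arc-integral $\|\hat{\cdot}\|_{L^2(\mu_\traj)}$.

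\textbf{Sparse Haar approximation of BV and Bessel bound.} First, I invoke the wavelet characterization of $BV([-1/2,1/2]^2)$ to show that the bivariate Haar coefficients $(c_\lambda)_\lambda$ of any $f\in\Ff(W)$ lie in weak-$\ell^1$ with $\|c\|_{w\ell^1}\leq CW$; hence the best $N$-term Haar approximation $f_N$ satisfies the Jackson-type inequality $\|f-f_N\|_2\leq CWN^{-1/2}$. Moreover, the $L^2$-tail of $f$ at Haar scales above $J$ is bounded by $CW\,2^{-J/2}$, so taking $J\gtrsim\ln N$ guarantees that the scale-truncated sparse approximation $\tilde f\in\Sigma_{N,J}$ still satisfies $\|f-\tilde f\|_2\leq CWN^{-1/2}$. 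Second, by an angular-averaging (Plancherel--P\'olya-type) argument, for any $h$ supported in $[-1/2,1/2]^2$ one has
\[
\|\hat h\|_{L^2(\mu_\traj)}^2\leq (C/\sep)\,\|h\|_2^2,
\]
and since here $\sep$ is of order $1$, the resulting Bessel constant is universal.

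\textbf{Transfer and optimization.} Let $W$ be a free parameter. By Theorem B there exists $f$ with $\|f\|_2=1$, $\var(f)\leq W$, and $\|\hat f\|_{L^2(\mu_\traj)}\leq C(\eps W)^{-1/2}(\ln^2(\eps W)+1)$. Let $\tilde f\in\Sigma_{N,J}$ be the sparse Haar approximation constructed above. Combining the triangle inequality in $L^2(\mu_\traj)$ with the Bessel bound applied to $f-\tilde f$ yields
\[
\|\tilde f\|_2\geq 1-CWN^{-1/2},\qquad \|\hat{\tilde f}\|_{L^2(\mu_\traj)}\leq C(\eps W)^{-1/2}\ln^2(\eps W)+CWN^{-1/2}.
\]
Balancing the two contributions by choosing $W\asymp N^{1/3}\eps^{-1/3}$ gives a ratio of order $N^{-1/6}\eps^{-1/3}\ln^2(N/\eps)$, while the scale threshold $J=C\ln(\eps^{-1}N)$ simultaneously dominates $\log_2 N$ and the logarithmic factor arising from $\var$. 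Absorbing constants and log factors and using $\eps\in(0,1)$, $N\geq 1$ yields the stated bound $CN^{-1/6}\eps^{-1}\ln^4(CN)$.

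\textbf{Main obstacle.} The delicate point is the quantitative Haar sparsification of BV functions in two dimensions---namely, the weak-$\ell^1$ estimate on Haar coefficients and the scale-tail bound $CW\,2^{-J/2}$---which rests on the wavelet characterization of $BV([-1/2,1/2]^2)$ and the interaction between the BV seminorm and the anisotropic Haar system. A secondary issue is to ensure that the extremal function produced in Theorem B can be taken real-valued (or that $\Sigma_{N,J}$ is interpreted to allow complex-valued Haar coefficients), so that its hard-thresholded Haar expansion genuinely lies in $\Sigma_{N,J}$. Tracking the logarithmic factors through the parameter balance, so that they collapse into the claimed $\ln^4(CN)$, requires care but is routine once the main estimates are in place.
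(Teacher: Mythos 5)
Your overall strategy (Haar-sparsify a bounded-variation witness, then push the error through the Bessel bound) matches the paper's, but there is a genuine gap in the sparsification step, and it explains why your version of the argument can't quite be closed from Theorem B alone.

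The problematic claim is the scale-tail estimate ``the $L^2$-tail of $f$ at Haar scales above $J$ is bounded by $CW\,2^{-J/2}$'' for $f\in \Ff(W)$ with $\norm{f}_2=1$. This is false. Take $f=h^e_{J+1,k}$ a single $L^2$-normalized bivariate Haar wavelet at scale $J+1$: then $\norm{f}_2=1$, $\var(f)\asymp 1$ (the variation of a Haar wavelet is scale-independent in two dimensions), and $\norm{f-P_Jf}_2=1$, yet your claimed bound would give $\norm{f-P_Jf}_2\lesssim 2^{-J/2}\to 0$. What a BV bound gives per scale is an $\ell^1$ estimate $\sum_{k,e}\abs{c^e_{j,k}}\lesssim\var(f)$, which yields $\norm{f-P_Jf}_1\leqslant 2^{-J}\var(f)$; to upgrade this to $L^2$ one needs an additional sup bound on the coefficients, i.e. an $L^\infty$ bound on $f$. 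The correct interpolation estimate, and the one the paper uses, is $\norm{f-P_Jf}_2\lesssim 2^{-J/2}\var(f)^{1/2}\norm{f}_\infty^{1/2}$.

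This is precisely why the paper does \emph{not} deduce Theorem C from Theorem B, but instead returns to Proposition \ref{prop_cube}, whose item $(iv)$ supplies the extra information $\norm{f}_\infty\lesssim\eps^{-1}$ for the concrete extremal function $f$. Theorem B's statement, which is what your proof consumes, exposes only the $\var(f)$ bound and therefore does not carry enough structure to control $\norm{f-P_Jf}_2$. (Your near-best $N$-term estimate $\norm{f-f_N}_2\lesssim N^{-1/2}\var(f)$ via Cohen--DeVore--Petrushev--Xu is fine; it is only the scale truncation that needs the $L^\infty$ input.) The fact that your parameter balance apparently produces the improved exponent $\eps^{-1/3}$ instead of the paper's $\eps^{-1}$ is itself a symptom: the missing $\norm{f}_\infty\lesssim\eps^{-1}$ factor is exactly what inflates $\eps^{-1/3}$ to $\eps^{-1}$ in the correct calculation.

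To repair the argument, replace the appeal to Theorem B with a direct appeal to Proposition \ref{prop_cube} (choosing $\zeta=N^{-1/6}$), record both $\var(f)\lesssim \eps^{-1}N^{1/3}\ln^4(CN^{1/3})$ and $\norm{f}_\infty\lesssim\eps^{-1}$, use the interpolation $\norm{f-P_Jf}_2\lesssim 2^{-J/2}\var(f)^{1/2}\norm{f}_\infty^{1/2}$ for the scale cut-off, combine it with the $N$-term bound, and then run your triangle-inequality/Bessel step; the rest of your outline then goes through as the paper's proof.
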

Informally, Theorem C says that when undersampling by a small factor $(1-\eps)$, one can recover at most $N\approx\eps^{-6}$ Haar coefficients with a stable condition number.

Theorem C complements related results that limit the wavelet-sparsity of discrete signals that can be sampled on unions of parallel lines \cite{MR3480103}. Let us mention that the sparsity model $\Sigma_{N,J}$ is rather crude. Modern results in sparse recovery exploit the fine multiscale structure of the wavelet coefficients of natural signals \cite{MR3382105}.

The same fundamental stability restrictions expressed by Theorems C and D also apply to \emph{any posssible discretization of the continuous sampling trajectories} -- see Theorem D below.

\subsection{Overview}\label{sec_overview}
The problem of sampling the Fourier transform of a compactly supported function is equivalent to the sampling problem for the Paley-Wiener space of bandlimited functions. We make essential use of Beurling's sampling theory. The sufficient sampling condition in Theorem A follows from Beurling's gap covering Theorem \cite{B1}, as done in \cite{BW}. The necessary condition in Theorem 
A is more challenging: little of the ample literature on necessary conditions for sampling \cite{MR2224392,MR3742438,MR3050313,MR3796637,MR2231911,MR1744572,landau1967sampling,MR0222554,MR2929058} is applicable to sampling on curves, because the new relevant metric (length) is fundamentally different from the one corresponding to pointwise sampling (cardinality) \cite{gruv15}.

Our main results show that spirals behave qualitatively like unions of parallel lines. While the analysis of sampling on parallel lines is based on periodization arguments \cite{uv2, uv1} and cross sections of the Fourier spectrum \cite{gruv15}, in order to treat spirals, we develop approximate versions of those tools. The main observation is that an adequate sequence of translates of a spiral converges to a union of parallel lines, locally in the Hausdorff metric. This allows us to apply Beurling's characterization of sampling in terms of {\it weak limits} \cite{B1,B2,MR2040080}.

In order to apply weak-limit techniques to curves, we first need to connect pointwise and continuous sampling. We provide a variation of a result from Ortega-Cerdà on {\it sampling measures} \cite{MR1676043}, and show that, under mild regularity assumptions, the continuous sampling inequality \eqref{eq_cont_samp} is equivalent to the existence of a sampling set contained in the sampling trajectory $\traj$ - see Section \ref{sec_traj} for precise definitions.
\newtheorem*{thmcontdiscrete}{\bf Theorem D}
\begin{thmcontdiscrete}{\em 
Let $\traj\subset\R^d$ be a regular trajectory, and $\Omega\subset\R^d$ bounded with positive measure. Then $\traj$ is a sampling trajectory for $PW^2(\Omega)$ if and only if there exists a discrete 
set $\aset\subset\traj$ that is a sampling set for $PW^2(\Omega)$.   }
\end{thmcontdiscrete}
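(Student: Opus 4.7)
The plan is to discretize the line integral $\int_\traj|\hat f|^2\,\d\Hh^1$ by a Riemann-type sum over a fine arclength partition of $\traj$, and to control the discretization error via Bernstein's inequality for Paley-Wiener functions. The central technical input is the upper trace inequality
\begin{equation*}
\int_\traj |\hat g|^2 \,\d\Hh^1 \leqslant C\|g\|_2^2, \qquad g\in L^2(\Omega),
\end{equation*}
which I would derive from the regularity of $\traj$ alone: combining the pointwise Plancherel-Polya bound $|\hat g(\xi)|^2 \lesssim \int_{B(\xi,1)}|\hat g|^2$ with Fubini yields $C\lesssim \sup_\eta \Hh^1(\traj\cap B(\eta,1))$, finite by regularity. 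Applying this inequality to each $x_j f\in L^2(\Omega)$ (with $\Omega$ bounded) and using the identity $\widehat{x_j f} = (2\pi i)^{-1}\partial_j\hat f$ gives the Sobolev-type control
\begin{equation*}
\int_\traj|\nabla \hat f|^2\,\d\Hh^1 \leqslant C'\|f\|_2^2, \qquad f\in L^2(\Omega),
\end{equation*}
which is the workhorse of the argument.

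For the implication \emph{continuous $\Rightarrow$ discrete}, I partition $\traj$ into consecutive arcs $\{I_k\}$ of arclength at most $\delta>0$, picking midpoints $\lambda_k \in I_k$ so that $\aset = \{\lambda_k\}$ is Euclidean-separated (possible by the regularity of $\traj$). For $\xi\in I_k$, arclength parametrization together with Cauchy-Schwarz gives $|\hat f(\xi) - \hat f(\lambda_k)|^2 \leqslant |I_k|\int_{I_k}|\nabla \hat f|^2\,\d\Hh^1$. Expanding $|\hat f(\xi)|^2 = |\hat f(\lambda_k)|^2 + 2\mathrm{Re}\,\overline{\hat f(\lambda_k)}(\hat f(\xi)-\hat f(\lambda_k)) + |\hat f(\xi)-\hat f(\lambda_k)|^2$, integrating over $I_k$, summing in $k$, and applying Cauchy-Schwarz globally together with the Sobolev bound, one reaches
\begin{equation*}
\biggl|\int_\traj|\hat f|^2\,\d\Hh^1 - \sum_k |I_k|\,|\hat f(\lambda_k)|^2\biggr| \leqslant \alpha\sum_k|I_k|\,|\hat f(\lambda_k)|^2 + C''\alpha^{-1}\delta^2\|f\|_2^2
\end{equation*}
for any $\alpha\in(0,1)$. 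Fixing $\alpha$ and then $\delta$ small, the continuous lower bound gives $\sum_k|I_k|\,|\hat f(\lambda_k)|^2 \gtrsim \|f\|_2^2$, and since $|I_k|\leqslant\delta$ this yields the discrete lower sampling bound $\sum_k|\hat f(\lambda_k)|^2 \gtrsim \delta^{-1}\|f\|_2^2$. The discrete upper bound follows from the standard Plancherel-Polya inequality applied to the separated set $\aset$.

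The reverse implication \emph{discrete $\Rightarrow$ continuous} uses the same comparison in the opposite direction: the upper trace inequality (hence the Sobolev bound) already holds by regularity, so one enlarges $\aset$ to a $\delta$-dense subset $\tilde\aset\subset\traj$, notes that $\tilde\aset$ inherits the discrete lower bound from $\aset\subset\tilde\aset$, and transfers this bound to $\int_\traj|\hat f|^2\,\d\Hh^1$ via the same discretization inequality. The main obstacle the argument must overcome is that $\traj$ may have infinite total arclength (as with the Archimedes spiral), so that a naive error estimate of order $O(\delta\cdot\Hh^1(\traj))$ would diverge; the role of applying the upper trace inequality to $x_j f$ is precisely to produce a global Sobolev bound that keeps the discretization error at $O(\delta^2\|f\|_2^2)$ independently of the total length of the trajectory.
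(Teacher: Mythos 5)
Your proof takes a genuinely different route from the paper's. The paper first establishes a more general result (Theorem \ref{extract}) about discretizing \emph{any} sampling Borel measure in $PW^p(\Omega)$, $1\leqslant p<\infty$: the modulus of continuity of $f$ is controlled via the mean-value representation $f=f\ast\varphi$ with a compactly supported $C^1$ kernel, followed by H\"older and Plancherel--P\'olya for the complex extension; Theorem D is then deduced by covering $\traj$ with Euclidean balls centered at a maximal $r$-separated subset of $\traj$, both regularity bounds \eqref{regradinf}--\eqref{regradsup} being used to identify the weighted point measure $\mu_r^*$ with the unweighted one. Your argument works directly in $L^2$ and replaces the mean-value machinery with a Sobolev-type bound $\int_\traj|\nabla\hat f|^2\,\d\Hh^1\lesssim\|f\|_2^2$, obtained by applying the trace inequality to $x_j f\in L^2(\Omega)$. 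This is a clean, self-contained device that never touches the complex extension, and it buys a more transparent $O(\delta^2\|f\|_2^2)$ bound on the Riemann-sum discretization error.

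Two steps, however, are gaps as written. First, the partition of $\traj$ into "consecutive arcs of arclength at most $\delta$" presumes a globally defined arclength parametrization; but a regular trajectory is only the image of a \emph{measurable} (possibly discontinuous) map subject to \eqref{regradinf}--\eqref{regradsup}, so it need not be connected ($O^\sep$ already comes from a discontinuous curve) nor partitionable into consecutive arcs. Second, the claim that the arc midpoints can be chosen Euclidean-separated "by the regularity of $\traj$" is not justified: regularity bounds $\sup_x\mu_\traj(B_{r_0}(x))$, but since $\Hh^1|_\traj$ lives on the \emph{set} $\traj$ (counting points that the parametrization visits several times only once), arbitrarily many parameter arcs of length $\delta$ may pass through a fixed small ball without the measure bound detecting them, so neither separation nor relative separation of the midpoints follows. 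For injective Lipschitz curves --- including $A^\sep$ and each circle of $O^\sep$ --- both objections disappear, but for the theorem at the stated generality they do not. Your gradient bound does combine well with the paper's construction: estimate $|\hat f(\xi)-\hat f(\lambda_k)|^2$ by integrating $\nabla\hat f$ along the Euclidean segment $[\lambda_k,\xi]\subset\R^d$ (which need not lie on $\traj$), control the pointwise values via $|\nabla\hat f(\eta)|^2\lesssim\int_{B_1(\eta)}|\nabla\hat f|^2$, and sum over the paper's finite-multiplicity ball covering using $\|\nabla\hat f\|_2\lesssim\|f\|_2$; this gives the same quadratic error without ever parametrizing $\traj$, at which point your trace-plus-gradient argument becomes a streamlined $p=2$ substitute for the paper's mean-value step.
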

Theorem D shows that two common formulations of the mobile sampling problem are equivalent \cite{uv2,uv1}. As a further consequence of Theorem D, the sampling relation \eqref{eq_cont_samp} also expresses the stability of a vast collection of {\it sampling schemes}, where functions are sampled on finite portions of the sampling trajectory $\traj$, and are reconstructed within a precisely described numerical accuracy \cite{MR3254605,adgaro16,MR2231856,MR2286012}, and leads to well-understood truncation errors \cite{MR2231911,MR1744572} and implementation strategies \cite{MR3209727,MR1613711,MR3398944,MR2679810}. Moreover, any sufficiently dense set $\aset\subset\traj$ is an adequate discretization - cf. Remark \ref{rem_suf_dense}.

As a second step we show that spirals suffer from {\it approximate aliasing}. Aliasing is the name given in signal processing to the artifacts produced by sampling on a lattice below the Nyquist rate. Aliasing is also the most obvious obstruction to subsampling compressible signals, and, heuristically, the success of sub-Nyquist sampling schemes relies on the fact that they avoid regular patterns \cite{MR2228740,lustig2008compressed}. We quantify the rate of converge of spirals to parallel lines and derive approximate aliasing for the curves $A^\sep$ \eqref{spiral} and $O^\sep$ \eqref{circles}. As a consequence we obtain an upper bound on the stability margin for the reconstruction of functions of bounded variation (Theorem B) or of functions that have few active Haar coefficients (Theorem C). These results underscore the need for a certain level of randomness in structured sampling \cite{boyer2017compressed} and for refined multiscale models \cite{MR3615409,MR3382105} that apply to generic signals.

This article is organized as follows: in Section 2 we review standard notions from sampling theory and provide preliminary results; in Section 3 we derive the characterization of sampling trajectories (Theorem D) and introduce spiraling curves; in Section 4 we give necessary and sufficient conditions for sampling on spiraling curves, and prove Theorem A; in Section 5 quantify the rate of convergence of spirals to collections of parallel lines and explore consequences on approximate aliasing; and in Section 6 we prove the results on sub-Nyquist sampling, Theorems B and C.

\section{Preliminaries on pointwise sampling}\label{sec_pre}
\subsection{Notation}
Throughout this paper, we will adopt the following notation: on $\R^d$, $\abs{\cdot}$ and $\scal{\cdot,\cdot}$ are the usual Euclidean norm and scalar product. For sets we will use $\diam E=\sup_{x,y\in E}\abs{x-y}$, $\dist(E,F)=\inf_{x\in E,y\in F}\abs{x-y}$. The balls are denoted by $B_r(x):=\{y\in\R^d:\abs{y-x}<r\}$.
For points on $\R^2$ we will use the notation $x=(x_1,x_2)$, and additionally, vectors on $\mathbb{S}^1$ will be written as $\vec{d}$, $\vec{l}$, and if $\vec{d}=(x_1,x_2)$ we set $\vec{d}_\perp=(-x_2,x_1)$. Clockwise rotations in $\R^2$ will be denoted by $\Rr_{2\pi\theta}$ with $\theta\in[0,1)$ and where $2\pi\theta$ is the angle of rotation. Unless otherwise stated, measures on $\mathbb{R}^d$ are assumed to take values in $[0,+\infty]$.

Throughout the paper, for $A,B\in\R$, $A\lesssim B$ means that there exists a constant $C>0$ independent from $A$ and $B$ such that $A\leqslant CB$. For functions (or measures) $f,g$, $f\lesssim g$ means that  $f(x)\leqslant Cg(x)$ for all $x$ where $f$ and $g$ are defined. Further, $A\asymp B$, means $A\lesssim B\lesssim A$ (and the same for functions or measures). When we want to particularly stress the dependence of the implicit constant $C$ on other factors we may write instead $C_p,C_\curve,C_{\Omega,p},\dots$. 

\subsection{Convex bodies}
A set $\Omega\subset\R^d$ is called a {\it convex body} if it is convex, compact and has non-empty interior. A convex body is called {\it centered} if $0\in\Omega^\circ$ and {\it symmetric} if $\Omega=-\Omega$. We will frequently use the fact that, for a convex centered symmetric body $\Omega$,
\begin{equation}
\label{body}\Omega\subset(1+\eps)\Omega^\circ\text{ and }(1-\eps)\Omega\subset\Omega^\circ,\quad\eps\in(0,1).
\end{equation}

\subsection{Paley-Wiener spaces}
Let us begin by recalling the definition of the standard function spaces involved in sampling theorems. Here and thereafter we will consider the normalized version of the Fourier transform:
\begin{equation*}
\hat{f}(\xi)=\int_{\R^d}e^{-2\pi i\xi\cdot x}f(x)\d x
\end{equation*}
for $f:\R^d\to\R$ integrable. The Fourier transform is then extended to Schwartz distributions in the usual way.

\begin{definition}
Let $\Omega\subset\R^d$ be a compact set of positive measure and $1\leqslant p\leqslant\infty$. We define $PW^p(\Omega)$ as the subset of $L^p(\R^d)$ consisting of Fourier transforms of distributions supported in $\Omega$.
\end{definition}
The classical {\it Paley-Wiener space} corresponds to $p=2$, while $p=\infty$ yields the {\it Bernstein space}. This latter space models possibly non-decaying bandlimited signals, although it has some disadvantages for signal processing, such as lack of invariance under the Hilbert transform. (A remedy to some of these obstacles has been proposed in \cite{MR3223467}.)

Functions in Bernstein spaces are {\it entire functions of exponential type},
as follows for example from the Paley-Wiener Theorem, \cite[Theorem 2.18]{MR591684} \cite[Theorem A]{OU}:
\begin{proposition}\label{PW}
Given a convex centered symmetric body $\Omega\subset\R^d$ and $1\leqslant p\leqslant\infty$, there exists a constant $c>0$ depending only on $\Omega$ such that every $f\in PW^p(\Omega)$ can be extended to an entire function with
\begin{equation*}
\abs{f(x+iy)}\lesssim e^{c\abs{y}},\quad x,y\in\R^d.
\end{equation*}
\end{proposition}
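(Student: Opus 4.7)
The plan is to exploit the compact support of $\hat f$: convolving $f$ with a Schwartz function whose Fourier transform is $1$ on $\Omega$ leaves $f$ unchanged, and the convolver itself admits an entire extension of exponential type via Fourier inversion over its compact spectral support.

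Using \eqref{body}, I first pick $\eps\in(0,1)$ and a function $\phi\in\mathcal{S}(\R^d)$ with $\hat\phi\in C_c^\infty(\R^d)$, $\hat\phi\equiv 1$ on $\Omega$, and $\supp\hat\phi\subset(1+\eps)\Omega$. Since $\hat f$ is a tempered distribution supported in $\Omega$, the identity $\hat f=\hat\phi\,\hat f$ gives $f=\phi\ast f$ in $\mathcal{S}'$; as $\phi\in\mathcal{S}$ and $f\in L^p(\R^d)$, the right-hand side is a bona fide continuous function, which I take as the representative of $f$.

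Next I would extend $\phi$ to $\C^d$ by
\begin{equation*}
\phi(z)=\int_{(1+\eps)\Omega}\hat\phi(\xi)\,e^{2\pi i z\cdot\xi}\d\xi,
\end{equation*}
which is entire on $\C^d$ by Morera (or by differentiating under the integral). Since $\Omega$ is centered symmetric, its support function satisfies $h_\Omega(-y)=h_\Omega(y)\leqslant R\abs{y}$ with $R:=\max_{\xi\in\Omega}\abs{\xi}$, and repeated integration by parts in $\xi$ combined with the smoothness of $\hat\phi$ produces, for every integer $N$, a pointwise bound of the form
\begin{equation*}
\abs{\phi(x+iy)}\leqslant C_N(1+\abs{x})^{-N}e^{c\abs{y}},
\end{equation*}
with $c$ slightly larger than $2\pi(1+\eps)R$; the polynomial-in-$\abs{y}$ factors produced by integrating by parts are absorbed into the exponential by enlarging $c$. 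Defining
\begin{equation*}
f(z):=\int_{\R^d}\phi(z-t)f(t)\d t,\quad z\in\C^d,
\end{equation*}
and invoking these bounds together with dominated convergence and Morera's theorem, one checks that $f$ extends to an entire function on $\C^d$ agreeing with $\phi\ast f$ on $\R^d$. H\"older's inequality with conjugate exponents $p,p'$ then yields
\begin{equation*}
\abs{f(x+iy)}\leqslant\norm{\phi(\cdot+iy)}_{L^{p'}(\R^d)}\norm{f}_{L^p(\R^d)}\lesssim e^{c\abs{y}}\norm{f}_{L^p(\R^d)},
\end{equation*}
because the Schwartz-type decay of $\phi(\cdot+iy)$ in the real variable makes the $L^{p'}$-norm finite with an exponential-in-$\abs{y}$ prefactor. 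The constant $c$ depends only on $\Omega$ via $R$ and $\eps$.

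The most delicate step is the derivation of the simultaneous Schwartz-in-$x$ and exponential-in-$y$ control of $\phi(x+iy)$: the polynomial-in-$\abs{y}$ growth produced by repeated integration by parts must be absorbed cleanly into the final exponential. Once that bookkeeping is in place, the remaining steps (entirety by Morera and the $L^{p'}$--$L^p$ H\"older estimate) are routine, and the resulting bound is uniform over $1\leqslant p\leqslant\infty$.
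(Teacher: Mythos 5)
The paper does not supply its own proof of this proposition; it simply cites it as a known Paley--Wiener/Plancherel--P\'olya type result (Young's book and Olevskii--Ulanovskii). So there is no in-paper argument to compare against, and your task is really to supply a correct self-contained proof, which you essentially do via the standard mollifier trick: reproduce $f$ by convolving against a Schwartz function with compactly supported Fourier transform, extend the mollifier holomorphically by Fourier inversion over its compact spectrum, and push the estimates through H\"older. The scheme is sound, and your bookkeeping of the polynomial-in-$\abs{y}$ losses being absorbed into a slightly larger exponential rate is the right way to handle the simultaneous Schwartz-in-$x$, exponential-in-$y$ control.

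One point needs tightening. You take $\hat\phi\equiv 1$ on $\Omega$ and conclude $\hat\phi\,\hat f=\hat f$ from $\supp\hat f\subset\Omega$. For a distribution $T$ supported in a compact set $K$, multiplying by a smooth $\psi$ with $\psi\equiv 1$ only \emph{on} $K$ does not in general fix $T$ (already $T=\delta'_a$ and $\psi(a)=1$, $\psi'(a)\neq 0$ gives a counterexample): one needs $\psi\equiv 1$ on a \emph{neighborhood} of $K$, so that $\psi-1$ vanishes identically near $\supp T$. The fix costs nothing in your setup: invoke \eqref{body} as you already intend, take $\hat\phi\equiv 1$ on $(1+\eps/2)\Omega$ and $\supp\hat\phi\subset(1+\eps)\Omega$, and then $(\hat\phi-1)\hat f=0$ is immediate because $\hat\phi-1$ vanishes on an open neighborhood of $\Omega\supset\supp\hat f$. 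With that correction the remainder of the argument -- entirety by Morera/differentiation under the integral, the pointwise decay of $\phi(\cdot+iy)$ uniform up to an $e^{c\abs{y}}$ factor, and the H\"older step yielding a constant uniform over $1\leqslant p\leqslant\infty$ -- goes through as you describe.
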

Functions of exponential type enjoy the following norm control of their analytic extensions on horizontal lines.
\begin{proposition}\label{PPL}
Let $f$ be an entire function in $\C^d$ with $\abs{f(x+iy)}\leqslant Me^{c\abs{y}}$ for all $x,y\in\R^d$ and where $M,c>0$ are constants. If $f(x)\in L^p(\R^d)$ with $1\leqslant p\leqslant\infty$ then for all $y\in\R^d$,
\begin{equation*}
\left(
\int_{\R^d}\abs{f(x+iy)}^p\d x
\right)^{1/p}
\leqslant e^{c\abs{y}} \left(\int_{\R^d}\abs{f(x)}^p\d x\right)^{1/p}
\end{equation*}
with the usual modifications when $p=\infty$.
\end{proposition}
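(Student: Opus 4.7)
The plan is to first reduce to dimension one by a rotation of coordinates, and then to handle the one-dimensional case via the Phragm\'en--Lindel\"of principle and the Poisson representation in the upper half-plane.

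\textbf{Reduction to dimension one.} Because the hypothesis $|f(x+iy)|\leqslant Me^{c|y|}$ uses the Euclidean norm $|y|$, which is invariant under orthogonal transformations, for any fixed $y\in\R^d\setminus\{0\}$ I would choose $R\in SO(d)$ with $R(|y|,0,\ldots,0)=y$ and set $\tilde f(z):=f(Rz)$. Then $\tilde f$ is entire, satisfies the same pointwise majorization, $\|\tilde f|_{\R^d}\|_p=\|f|_{\R^d}\|_p$ by change of variable, and $\|\tilde f(\cdot+i(|y|,0,\ldots,0))\|_p=\|f(\cdot+iy)\|_p$. Hence one may assume the shift is along the first coordinate. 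Writing $x=(x_1,x')$, Fubini shows $\tilde f(\cdot,x')\in L^p(\R)$ for a.e.\ $x'\in\R^{d-1}$, and for each such slice $z_1\mapsto\tilde f(z_1,x')$ is entire with $|\tilde f(z_1,x')|\leqslant Me^{c|\mathrm{Im}\,z_1|}$. Applying the one-dimensional statement slicewise and integrating in $x'$ yields the bound $e^{cp|y_1|}\|f\|_p^p=e^{cp|y|}\|f\|_p^p$.

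\textbf{One-dimensional case.} For $y>0$, set $g(z):=f(z)e^{icz}$. In the closed upper half-plane $|g(x+iy)|=|f(x+iy)|e^{-cy}\leqslant M$, so $g$ is bounded and analytic there, while $|g(x)|=|f(x)|$ on $\R$, so $g|_\R\in L^p$. It suffices to prove the scale-free inequality
\begin{equation*}
\|g(\cdot+iy)\|_p\leqslant\|g|_\R\|_p,\qquad y>0,
\end{equation*}
because $\|f(\cdot+iy)\|_p=e^{cy}\|g(\cdot+iy)\|_p$. The case $y<0$ is handled symmetrically in the lower half-plane with $g(z):=f(z)e^{-icz}$.

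\textbf{Poisson domination.} I would establish the pointwise bound
\begin{equation*}
|g(x+iy)|^p\leqslant\frac{y}{\pi}\int_\R\frac{|g(t)|^p}{(x-t)^2+y^2}\,dt
\end{equation*}
by comparing the subharmonic function $|g|^p$ with the Poisson extension $P$ of its boundary values. On truncated rectangles $[-R,R]\times(0,Y)$, $|g|^p-P$ is subharmonic and non-positive on the base; the uniform bound $|g|\leqslant M$ together with a small barrier $\varepsilon\cdot\mathrm{Im}\,z$ controls it on the top and lateral sides, so that letting $R,Y\to\infty$ and $\varepsilon\to 0$ the maximum principle gives $|g|^p\leqslant P$ throughout the upper half-plane. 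Integrating in $x$ and applying Fubini with $\int_\R y/(\pi((x-t)^2+y^2))\,dx=1$ then delivers $\|g(\cdot+iy)\|_p^p\leqslant\|g|_\R\|_p^p$, completing the one-dimensional step.

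\textbf{Main obstacle.} The delicate point is the Poisson domination in the Phragm\'en--Lindel\"of step: for $p<\infty$ one cannot directly invoke the maximum principle on the unbounded half-plane because $|g|^p$ need not vanish along vertical directions, and the boundary datum is only in $L^1$. Exhausting by rectangles while using the uniform bound $|g|\leqslant M$ and a vanishing barrier is the technical crux. The case $p=\infty$, in contrast, is immediate from the maximum modulus principle applied to $g$ itself.
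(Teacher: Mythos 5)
The paper offers no proof of its own for this proposition --- it cites \cite[Theorems 2.11 and 2.16]{MR591684} --- so there is nothing to compare word for word; but the route you take (rotate to reduce to one real variable, multiply by $e^{icz}$ to make the analytic function bounded on a half-plane, dominate the subharmonic $|g|^p$ by a Poisson integral, then integrate with Fubini) is exactly the standard Plancherel--P\'olya/Phragm\'en--Lindel\"of argument, and your dimensional reduction, the $p=\infty$ case, and the reduction to $\|g(\cdot+iy)\|_p\leqslant\|g|_\R\|_p$ are all correct.

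The gap sits exactly where you say the crux is, and your barrier does not close it. On the lateral sides $\{x=\pm R,\ 0<y<Y\}$ of the rectangle, all that $|g|\leqslant M$, $P\geqslant 0$ and the barrier $\varepsilon\,\mathrm{Im}\,z$ give is $|g|^p-P-\varepsilon y\leqslant M^p-\varepsilon y$, which near the bottom corners is $\approx M^p$ and does not decrease as $R\to\infty$: the $L^p$ hypothesis on $g|_\R$ does not force $|g|$ to decay along $\R$, so the maximum principle on the rectangle returns only the trivial bound. The repair is to use a barrier that tends to $+\infty$ in \emph{all} directions at infinity in the closed half-plane, for instance $\varepsilon\log|z+i|$. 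Writing $v:=|g|^p-P$ with $P$ the Poisson extension of $|g|_\R|^p$, one has that $v$ is subharmonic, bounded above by $M^p$, and extends continuously to $\overline{\C^+}$ with boundary value $0$ (here one uses that $|g|_\R|^p$ is continuous and in $L^1$, so its Poisson integral attains its boundary values everywhere, locally uniformly). Then $v-\varepsilon\log|z+i|$ is $\leqslant 0$ on $\R$ and tends to $-\infty$ as $|z|\to\infty$ in $\overline{\C^+}$, so the maximum principle on large half-disks gives $v\leqslant\varepsilon\log|z+i|$, and letting $\varepsilon\to0$ yields $|g|^p\leqslant P$. (Equivalently: exhaust by half-disks and use that the harmonic measure of the semicircle from any fixed interior point tends to zero, while on the diameter it is dominated by and converges to the half-plane Poisson kernel, so dominated convergence applies since $|g|_\R|^p\in L^1$.) Once the pointwise domination is in place, your Fubini step finishes the one-dimensional case and, with the slicing, the proof.
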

For $1\leqslant p<\infty$, Proposition \ref{PPL} is referred as the Plancharel-Pólya inequality and for
$p=\infty$ as the Phragmén–Lindelöf principle; see, e.g., \cite[Theorem 2.11 and 2.16]{MR591684}.

\subsection{Sampling sets and Beurling's gap theorem}
A set $\aset\subset\R^d$ is said to be {\it sampling} for $PW^p(\Omega)$ if
\begin{equation*}
\norm{f}_p\asymp\norm{f}_{\ell^p(\aset)},\quad f\in PW^p(\Omega),
\end{equation*}
where $\norm{f}_{\ell^p(\aset)}=\left(\sum_{\lambda\in\aset}\abs{f(\lambda)}^p\right)^{1/p}$, if $p<\infty$, and $\norm{f}_{\ell^\infty(\aset)}=\sup_{\lambda\in\aset}\abs{f(\lambda)}$.

A set $\aset\subset\R^d$ is {\it separated} if its {\it separation}
\begin{equation*}
\inf_{\lambda,\lambda'\in\aset,\lambda\neq\lambda'}\abs{\lambda-\lambda'}
\end{equation*}
is positive, and it is {\it relatively dense} if its {\it gap} (or hole)
\begin{equation*}
\gap(\aset):=\sup_{x\in\R^d}\inf_{\lambda\in\aset}\abs{x-\lambda}
\end{equation*}
is finite.

The most effective sufficient condition for sampling bandlimited functions in high dimension is formulated in terms of gaps, and is due to Beurling \cite{B1,B2} - see also \cite{BW,OU}.

\begin{theorem}\label{sufficientdense}
Let $\aset\subset\R^d$ and $R>0$. If $\gap(\aset)<1/(2R)$ then $\aset$ is a sampling set for $PW^\infty(\bar{B}_{R/2}(0))$.
\end{theorem}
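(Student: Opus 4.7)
My plan is as follows. The bound $\norm{f}_{\ell^\infty(\aset)}\le\norm{f}_\infty$ is immediate, so the task is the converse uniform estimate $\norm{f}_\infty\le C\norm{f}_{\ell^\infty(\aset)}$ for every $f\in PW^\infty(\bar B_{R/2}(0))$. I would argue by a normal-families / weak-limit contradiction. Assuming no such $C$ exists, pick $f_n\in PW^\infty(\bar B_{R/2}(0))$ with $\norm{f_n}_\infty=1$ and $\norm{f_n}_{\ell^\infty(\aset)}\to 0$, and choose $x_n\in\R^d$ with $\abs{f_n(x_n)}\ge 1/2$. Setting $g_n(z):=f_n(z+x_n)$ and $\aset_n:=\aset-x_n$, Propositions \ref{PW} and \ref{PPL} (with $p=\infty$) give a uniform bound $\abs{g_n(x+iy)}\le e^{c\abs{y}}$, with $c$ depending only on $R$, as well as $\norm{g_n}_\infty\le 1$.

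The family $\{g_n\}$ is therefore locally uniformly bounded on $\C^d$, and Montel's theorem yields a subsequence converging locally uniformly to an entire $g$ satisfying $\abs{g(0)}\ge 1/2$, $\norm{g}_\infty\le 1$, and the same exponential growth bound; the converse Paley--Wiener theorem then forces $g\in PW^\infty(\bar B_{R/2}(0))$. Simultaneously, every $\aset_n$ has $\gap(\aset_n)=\gap(\aset)<1/(2R)$, so after a further extraction the $\aset_n$ converge in the Hausdorff-on-compacts sense to a closed set $\aset'\subset\R^d$ with $\gap(\aset')\le\gap(\aset)$. For each $\lambda'\in\aset'$, choosing $\lambda_n\in\aset_n$ with $\lambda_n\to\lambda'$ and combining $\abs{g_n(\lambda_n)}\le\norm{f_n}_{\ell^\infty(\aset)}\to 0$ with the locally uniform convergence $g_n\to g$ yields $g(\lambda')=0$. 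Thus the limit $g$ vanishes on a set whose gap is strictly less than $1/(2R)$.

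The hard part is the residual \emph{uniqueness principle}: any $g\in PW^\infty(\bar B_{R/2}(0))$ vanishing on a relatively dense set of gap $<1/(2R)$ must be identically zero. This is the heart of Beurling's theorem and is precisely the step that cannot be obtained from elementary Bernstein-type estimates (which would only yield the suboptimal threshold $1/(\pi R)$). I would prove it by a Phragm\'en--Lindel\"of and Poisson--Jensen analysis of the zero set, showing that a non-zero entire function of exponential type $\pi R$ cannot have zeros denser than the Nyquist threshold. Granted this uniqueness, $g\equiv 0$ contradicts $\abs{g(0)}\ge 1/2$, completing the proof.
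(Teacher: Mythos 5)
The paper does not actually prove this result: Theorem \ref{sufficientdense} is stated and attributed to Beurling \cite{B1,B2}, with further references to \cite{BW,OU}, and no proof is given. So the relevant comparison is between your attempt and Beurling's proof (or the multidimensional version in \cite{OU}), not a proof in this paper.

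Your soft-analysis reduction is correct and is, in fact, precisely the ``if'' direction of Beurling's weak-limit characterization (Theorem \ref{Beurling} in the paper): the normal-families extraction of $g$ via Proposition \ref{PW} and the Phragm\'en--Lindel\"of bound, the converse Paley--Wiener step, the Hausdorff-on-compacts compactness of the translated sets $\Lambda_n$ (valid even without separation because it is the Fell/Chabauty topology on closed subsets), and the inequality $\gap(\Lambda')\leqslant\gap(\Lambda)$ all go through. You have correctly reduced Theorem \ref{sufficientdense} to the statement that any $\Lambda'$ with $\gap(\Lambda')<1/(2R)$ is a set of uniqueness for $PW^\infty(\bar B_{R/2}(0))$, and you are right that Bernstein's inequality alone only yields the weaker threshold $1/(\pi R)$.

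The gap is that this last uniqueness statement is the entire content of the theorem, and you do not prove it --- you only name techniques (``Phragm\'en--Lindel\"of and Poisson--Jensen analysis of the zero set''). Two concrete objections. First, the theorem is stated in $\R^d$ for arbitrary $d$: for $d>1$ the zero set of an entire function is an analytic variety, not a discrete set, so there is no Poisson--Jensen formula to apply; one has to reduce to one complex variable (e.g.\ by restricting to lines and controlling which lines carry enough zeros, as in \cite{OU}), or follow Beurling's original argument via harmonic majorants. Second, even in $d=1$ the sharp gap threshold $1/(2R)$ is a delicate balayage/harmonic-majorant argument of Beurling's, not a routine consequence of Jensen's formula, and the sketch gives no indication of how the critical constant arises. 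As written, the proof reduces the theorem to a restatement of itself. If the intent is to treat Theorem \ref{sufficientdense} as a black box (as the paper does), the honest move is to cite \cite{B1,B2,OU}; if the intent is to give a self-contained proof, the uniqueness step needs to be carried out, and the multidimensional issue must be addressed explicitly.
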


The value $1/(2R)$ in Theorem \ref{sufficientdense} is critical in the sense that there exists a set $\aset$ with $\gap(\aset)=1/(2R)$ that is not sampling for the spectrum $\bar{B}_{R/2}(0)$. On the other hand, as examples of Theorem \ref{sufficientdense} we note that
\begin{equation}
\label{eq_calc_gaps}\gap(A^\sep)=\gap(O^\sep)=\sep/2,
\end{equation}
see e.g. \cite[Example 2]{BW}, and hence these sets are sampling for $PW^\infty(\bar{B}_{R/2}(0))$ whenever $\sep R<1$.

While Theorem \ref{sufficientdense} applies to arbitrary sets, sometimes it is convenient to work with separated sets. We state without proof the following elementary lemma.

\begin{lemma}\label{lemma_sep_gap}
Let $\aset\subset\R^d$, and $R>\gap(\aset)$. Then there exists a separated set $\aset'\subset\aset$ such that $\gap(\aset')\leqslant R$.
\end{lemma}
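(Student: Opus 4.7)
The plan is to extract $\aset'$ from $\aset$ as a maximal subset with a prescribed minimum separation, and then read off the gap bound from that maximality property. Since $R>\gap(\aset)$, I first fix any $\delta$ with $0<\delta<R-\gap(\aset)$. I also fix an auxiliary $\eta>0$ small enough that $\gap(\aset)+\eta+\delta\leqslant R$; this slack is to accommodate the fact that the infimum in the definition of $\gap(\aset)$ need not be attained.

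Next, I apply Zorn's lemma (or a transfinite/countable inductive construction, using that $\R^d$ is separable) to produce a subset $\aset'\subset\aset$ that is maximal with respect to the property
\begin{equation*}
\abs{\lambda-\lambda'}\geqslant\delta\quad\text{for all distinct }\lambda,\lambda'\in\aset'.
\end{equation*}
By construction $\aset'$ is separated with separation at least $\delta$, so the first conclusion of the lemma is immediate.

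It remains to bound $\gap(\aset')$. Let $x\in\R^d$. By definition of $\gap(\aset)$, I can pick $\mu\in\aset$ with $\abs{x-\mu}\leqslant\gap(\aset)+\eta$. By maximality of $\aset'$, either $\mu\in\aset'$, or else adding $\mu$ to $\aset'$ would violate the separation condition, which means there exists $\lambda'\in\aset'$ with $\abs{\mu-\lambda'}<\delta$. In either case there is some $\lambda'\in\aset'$ with
\begin{equation*}
\abs{x-\lambda'}\leqslant\abs{x-\mu}+\abs{\mu-\lambda'}\leqslant\gap(\aset)+\eta+\delta\leqslant R.
\end{equation*}
Taking the infimum over $\lambda'\in\aset'$ and then the supremum over $x\in\R^d$ gives $\gap(\aset')\leqslant R$, as desired.

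There is no real obstacle here; the proof is essentially a one-line maximal-element argument. The only point that requires a bit of care is that the gap is defined as a sup of infs, so a small perturbation $\eta$ must be absorbed into the choice of $\delta$ in order to convert the strict inequality $R>\gap(\aset)$ into the non-strict bound $\gap(\aset')\leqslant R$ claimed in the statement.
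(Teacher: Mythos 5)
Your proof is correct. The paper states Lemma \ref{lemma_sep_gap} explicitly \emph{without} proof (``We state without proof the following elementary lemma''), so there is no argument in the text to compare against. The maximal $\delta$-separated subset construction you use is the standard one, and you handle the only subtle point carefully: since $\gap(\aset)$ is a $\sup$ of $\inf$s, the existence of $\mu\in\aset$ with $|x-\mu|\leqslant\gap(\aset)+\eta$ requires the extra slack $\eta$, which you correctly absorb by choosing $\delta$ and $\eta$ so that $\gap(\aset)+\eta+\delta\leqslant R$. No gap.
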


\subsection{Sampling with different norms}
The following classical result shows that the sampling problems associated with $PW^2$ and $PW^\infty$ are almost equivalent. See \cite[Theorem 2.1]{OU} for a simple proof.
\begin{theorem}\label{shrink}
Let $\aset\subset\R^d$ be a separated set, $\Omega\subset\R^d$ a compact set of positive measure and $\eps>0$.
\begin{enumerate}
\item If $\aset$ is sampling for $PW^2(\Omega+\bar{B}_\eps(0))$ then it is sampling for $PW^\infty(\Omega)$.

\item If $\aset$ is sampling for $PW^\infty(\Omega+\bar{B}_\eps(0))$ then it is sampling for $PW^2(\Omega)$.
\end{enumerate}
\end{theorem}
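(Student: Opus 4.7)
The plan is to prove both implications by multiplying by a smooth envelope with small Fourier support. Fix once and for all a Schwartz function $\phi$ with $\widehat\phi$ supported in $\bar{B}_\eps(0)$ and $\phi(0)=1$.

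For part \emph{(i)}, I would take $f\in PW^\infty(\Omega)$ and any $x_0\in\R^d$, and form $g(y):=f(y)\phi(y-x_0)$. Boundedness of $f$ together with $\phi\in L^2$ gives $g\in L^2$, while $\supp\widehat{g}\subset\Omega+\bar{B}_\eps(0)$, so $g\in PW^2(\Omega+\bar{B}_\eps(0))$. Applying the assumed $PW^2$ sampling inequality to $g$ together with the bound $\|g\|_\infty\leqslant|\Omega+\bar{B}_\eps(0)|^{1/2}\|g\|_2$ (Cauchy--Schwarz on the Fourier side, using that the spectrum is bounded) yields
\begin{equation*}
|f(x_0)|=|g(x_0)|\leqslant\|g\|_\infty\lesssim\|g\|_{\ell^2(\aset)}\leqslant\|f\|_{\ell^\infty(\aset)}\Bigl(\sum_{\lambda\in\aset}|\phi(\lambda-x_0)|^2\Bigr)^{1/2}.
\end{equation*}
Separation of $\aset$ together with the rapid decay of $\phi$ makes the last sum bounded uniformly in $x_0$, so taking $\sup_{x_0}$ produces $\|f\|_\infty\lesssim\|f\|_{\ell^\infty(\aset)}$; the reverse inequality is trivial.

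For part \emph{(ii)}, I would run the dual construction. Since $\Omega$ is compact of positive measure, every $f\in PW^2(\Omega)$ satisfies $\widehat f\in L^1$, hence $f$ is continuous and bounded, and for each $x\in\R^d$ the function $g_x(y):=f(y)\phi(y-x)$ lies in $PW^\infty(\Omega+\bar{B}_\eps(0))$. The assumed $PW^\infty$ sampling inequality applied to $g_x$ gives $|f(x)|=|g_x(x)|\lesssim\sup_{\lambda\in\aset}|f(\lambda)||\phi(\lambda-x)|$. Dominating the supremum by the $\ell^2$-sum, squaring and integrating over $x\in\R^d$ yields
\begin{equation*}
\|f\|_2^2\lesssim\sum_{\lambda\in\aset}|f(\lambda)|^2\int_{\R^d}|\phi(\lambda-x)|^2\d x=\|\phi\|_2^2\,\|f\|_{\ell^2(\aset)}^2.
\end{equation*}
The reverse bound $\|f\|_{\ell^2(\aset)}\lesssim\|f\|_2$ follows from a standard local mean-value inequality for the holomorphic extension of $f$ (using Proposition \ref{PPL}) summed over the disjoint balls afforded by separation of $\aset$.

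The main technical point in both directions is the uniform boundedness of $\sum_{\lambda\in\aset}|\phi(\lambda-x_0)|^2$ and of its integral analogue, which is a direct consequence of separation of $\aset$ together with the Schwartz decay of $\phi$: each unit cube contains $O(1)$ sampling points. Beyond this and the ingredients already recorded in the paper (Cauchy--Schwarz on the Fourier side for bounded spectra, and Proposition \ref{PPL}), no delicate argument is required.
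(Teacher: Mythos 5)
Your proof is correct. The paper itself does not prove Theorem~\ref{shrink}; it cites \cite[Theorem 2.1]{OU}, and your argument --- multiplying by a translated bandlimited Schwartz bump $\phi$ with $\supp\widehat\phi\subset\bar{B}_\eps(0)$ and $\phi(0)=1$ to pass between $PW^\infty$ and $PW^2$, combined with Cauchy--Schwarz on the Fourier side and the separated-set Bessel bound --- is exactly the classical Olevskii--Ulanovskii argument being referenced.
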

As an application, we obtain the following corollary for convex bodies.
\begin{corollary}\label{bodyshrink}
Let $\aset\subset\R^d$ be a separated set, $\Omega\subset\R^d$ a convex centered symmetric body and $\eps\in(0,1)$. 
\begin{enumerate}
\item If $\aset$ is sampling for $PW^2(\Omega)$, then it is sampling for $PW^\infty((1-\eps)\Omega)$. 

\item If $\aset$ is sampling for $PW^\infty(\Omega)$, then it is sampling for $PW^2((1-\eps)\Omega)$. 
\end{enumerate}
\end{corollary}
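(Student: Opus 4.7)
The plan is to deduce this corollary from Theorem \ref{shrink} by exploiting the fact that, in a convex centered symmetric body, the rescaled set $(1-\eps)\Omega$ sits strictly inside $\Omega$, so we can absorb a small enlargement by a ball.

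The key geometric observation is the second part of \eqref{body}: $(1-\eps)\Omega \subset \Omega^\circ$. Since $(1-\eps)\Omega$ is compact and $\Omega^\circ$ is open, their complement distance is strictly positive, and hence there exists $\delta = \delta(\eps,\Omega)>0$ such that
\begin{equation*}
(1-\eps)\Omega + \bar{B}_\delta(0) \subset \Omega.
\end{equation*}
(Alternatively, using that $\Omega$ contains some ball $\bar{B}_r(0)$ around the origin and that $\Omega$ is convex, one gets this directly with $\delta = \eps r$.)

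With this inclusion in hand, both parts are immediate. For (i), assume $\aset$ is sampling for $PW^2(\Omega)$. Since $PW^2((1-\eps)\Omega + \bar{B}_\delta(0)) \subset PW^2(\Omega)$ (the Fourier support is smaller), the sampling inequality for the larger space restricts to give the sampling inequality for the smaller one, so $\aset$ is sampling for $PW^2((1-\eps)\Omega + \bar{B}_\delta(0))$. Applying Theorem \ref{shrink}(i) with the compact set $(1-\eps)\Omega$ in place of $\Omega$ and $\delta$ in place of $\eps$ then yields that $\aset$ is sampling for $PW^\infty((1-\eps)\Omega)$. Part (ii) is the same argument with Theorem \ref{shrink}(ii) substituted for Theorem \ref{shrink}(i).

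There is no real obstacle here; the only point that requires a moment's thought is the existence of the cushion $\delta$, which is precisely what convexity, symmetry, and the centered hypothesis provide via \eqref{body}. Note that the separation hypothesis on $\aset$ is inherited from the assumption and is needed to invoke Theorem \ref{shrink}.
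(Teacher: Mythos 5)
Your proof is correct and follows the same route as the paper: both use \eqref{body} to produce a cushion $\delta>0$ with $(1-\eps)\Omega+\bar{B}_\delta(0)\subset\Omega$, note that sampling inequalities restrict to smaller spectra, and then invoke Theorem~\ref{shrink} with $(1-\eps)\Omega$ and $\delta$ in the roles of $\Omega$ and $\eps$. The only cosmetic difference is that you spell out the subspace-restriction step and offer the convexity computation $\delta=\eps r$ as an alternative to the compactness argument; both are fine.
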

\begin{proof}
By \eqref{body}, $(1-\eps)\Omega\subset\Omega^\circ$. Since $\Omega$ is compact, this implies that \[\dist((1-\eps)\Omega,(\Omega^\circ)^c)>\eps',\] and, therefore, $(1-\eps)\Omega+\bar{B}_{\eps'}(0)\subset\Omega$. The conclusions now follow from Theorem \ref{shrink}.
\end{proof}

\subsection{Characterization of sampling with weak limits}
\begin{definition}
A set $\aset\subset\R^d$ is called a {\it set of uniqueness} for $PW^p(\Omega)$ if $f\in PW^p(\Omega)$ with $f|_\aset=0$ implies $f\equiv0$.
\end{definition}

Sampling sets are sets of uniqueness. The converse is not true, but it is a remarkable insight due to Beurling, that it is still possible to characterize a sampling set through the uniqueness of what are called its {\it weak limits}.

\begin{definition}\label{weak}
Let $\aset\subset\R^d$ be a closed set and let $\{\aset_n\}_{n \geqslant1}$ be a sequence of closed sets. Then we say that $\{\aset_n\}_{n \geqslant1}$ converges weakly to $\aset$ if for all $R,\eps>0$ there exist $n_{R,\eps}$ such that
\begin{gather*}
\aset_n\cap(-R,R)^d\subset\aset+B_\eps(0),
\\
\aset\cap(-R,R)^d\subset\aset_n+B_\eps(0),
\end{gather*}
hold for all $n\geqslant n_{R,\eps}$. In this case we write $\aset_n\weakconv\aset$.
\end{definition}
Weak convergence can also be formulated in terms of the \emph{Hausdorff distance} between two sets $X,Y \subseteq \R^d$:
\begin{align*}
d_H(X,Y) := \inf \left\{\varepsilon>0: X \subseteq Y+ B_\varepsilon(0),
Y \subseteq X+ B_\varepsilon(0) \right\}.
\end{align*}
Indeed, $\aset_n\weakconv\aset$ if and only if
\begin{align}
\label{eq_weak_conv}
d_H \big( (\Lambda_n \cap \bar{B}_R(x)) \cup \partial \bar{B}_R(x))
, (\Lambda  \cap \bar{B}_R(x) ) \cup \partial \bar{B}_R(x))
\big) \to 0, \qquad \forall x\in \R^d , R>0 \,  . 
\end{align}
(To appreciate the role of the boundary of the ball in the last equation, consider the following example in dimension $d=1$: $\Lambda_n:=\{1+1/n\}$, $\Lambda:=\{1\}$ and $B_R(x)=[0,1]$.) 

The type of weak limits that are needed to characterize sampling sets are those resulting from translates. We will denote the set of weak limits of translates of $\aset$ by $W(\aset)$. Hence, $\aset'\in W(\aset)$ if and only if there exists a sequence $\{x_n\}_{n\in\N}\subset\R^d$ such that $\aset+x_n\weakconv\aset'$.

\begin{theorem}[Beurling, {\cite[Theorem 3, pg. 345]{B2}}]\label{Beurling}
Let $\Omega\subset\R^d$ be a convex centered symmetric body and let $\aset\subset\R^d$. Then $\aset$ is a sampling set for $PW^\infty(\Omega)$ if and only if for all weak limits $\aset'\in W(\aset)$, $\aset'$ is a set of uniqueness for $PW^\infty(\Omega)$.
\end{theorem}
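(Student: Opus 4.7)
My plan is to handle the two implications with different arguments, each combining normal families --- Proposition \ref{PW} makes every bounded subset of $PW^\infty(\Omega)$ a normal family of entire functions, and Proposition \ref{PPL} propagates the spectral condition to locally uniform limits --- with sequential compactness of closed sets in the weak topology of Definition \ref{weak}, which follows by diagonalization from the compactness of the Hausdorff metric on closed subsets of each ball.

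For $(\Leftarrow)$, I argue the contrapositive: assuming $\aset$ is not sampling, I will produce a weak limit $\aset' \in W(\aset)$ that fails uniqueness. Non-sampling provides $f_n \in PW^\infty(\Omega)$ with $\norm{f_n}_\infty = 1$ and $\sup_\aset \abs{f_n} \to 0$. Picking $x_n$ with $\abs{f_n(x_n)} \geq 1/2$ and translating $g_n(x) := f_n(x+x_n)$ yields normalized $g_n \in PW^\infty(\Omega)$ with $\abs{g_n(0)} \geq 1/2$. Extracting a locally uniformly convergent subsequence $g_n \to g \in PW^\infty(\Omega)$ by Montel, the limit satisfies $\abs{g(0)} \geq 1/2$ so $g \neq 0$. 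A further subsequence gives $\aset - x_n \weakconv \aset' \in W(\aset)$. I verify $g|_{\aset'} = 0$ via a telescoping estimate: for $\mu \in \aset'$ and $\mu_n = \lambda_n - x_n \in \aset - x_n$ with $\mu_n \to \mu$,
\[
\abs{g(\mu)} \leq \abs{g(\mu)-g(\mu_n)} + \abs{g(\mu_n)-g_n(\mu_n)} + \abs{g_n(\mu_n)},
\]
and the three terms tend to zero by continuity of $g$, by local uniform convergence $g_n \to g$, and via $\abs{g_n(\mu_n)} = \abs{f_n(\lambda_n)} \leq \sup_\aset \abs{f_n} \to 0$. This exhibits a non-trivial $g$ vanishing on $\aset'$, contradicting uniqueness.

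For $(\Rightarrow)$, I will prove the sharper statement that if $\aset$ is sampling with constant $C$, then every $\aset' \in W(\aset)$ is itself sampling with the same constant $C$; uniqueness then follows immediately. Given $\aset + x_n \weakconv \aset'$, translation invariance of $PW^\infty(\Omega)$ makes each $\aset + x_n$ sampling with the same constant $C$, so $\norm{f}_\infty \leq C \sup_{\aset + x_n} \abs{f}$ holds for every $f \in PW^\infty(\Omega)$. Choosing $\lambda_n \in \aset + x_n$ almost realizing the supremum, if a subsequence stays bounded then weak convergence pushes $\lambda_n$ into the closed set $\aset'$ and continuity of $f$ yields $\sup_{\aset'} \abs{f} \geq \norm{f}_\infty / C$. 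The main obstacle will be the case $\abs{\lambda_n} \to \infty$, where weak convergence --- being local --- does not constrain $\aset + x_n$ at infinity, and large values of $\abs{f}$ near $\lambda_n$ need not correspond to any point of $\aset'$. The classical remedy (Beurling \cite{B2}) is iterative recentering: translate by $\lambda_n$, extract $F := \lim f(\cdot + \lambda_n) \in PW^\infty(\Omega)$ with $\abs{F(0)} \geq \norm{f}_\infty / C$ and a secondary weak limit $\aset + x_n - \lambda_n \weakconv \aset'' \in W(\aset)$ (using the stability $W(W(\aset)) \subseteq W(\aset)$), then iterate; a compactness argument over $W(\aset)$ terminates the procedure at an extremal weak limit where the Case A analysis applies and closes the loop. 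I would import this step from Beurling's original treatment, as it is the core technical content of the theorem.
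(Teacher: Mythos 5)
The paper does not supply a proof of Theorem \ref{Beurling}; it is quoted directly from Beurling's collected works, so there is no internal argument to compare against, and I assess your proposal on its own terms.

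Your proof of the sufficiency direction $(\Leftarrow)$ is complete and correct, and it is the classical Beurling argument. The key points check out: non-sampling produces $f_n \in PW^\infty(\Omega)$ with $\norm{f_n}_\infty = 1$ and $\sup_\aset \abs{f_n} \to 0$; recentering at near-maxima and invoking Proposition \ref{PW} gives a uniformly bounded family of entire functions of exponential type, so Montel's theorem applies and a locally uniform limit $g$ with $\abs{g(0)} \geqslant 1/2$ exists, and bounded pointwise convergence on $\R^d$ preserves the support of the distributional Fourier transform, so $g \in PW^\infty(\Omega)$; local-Hausdorff compactness of closed sets gives a weak limit $\aset' \in W(\aset)$; and your three-term telescoping estimate correctly transfers the vanishing of $f_n$ on $\aset$ to vanishing of $g$ on $\aset'$. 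The reason no iteration is needed here is that the recentering points $x_n$ are chosen precisely where the functions $f_n$ carry their mass, so nothing escapes to infinity.

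The necessity direction $(\Rightarrow)$ is where you leave a genuine gap. You are right that translation invariance gives $\norm{f}_\infty \leqslant C \sup_{\aset+x_n} \abs{f}$ for every $n$, and that if the near-extremal points $\lambda_n \in \aset + x_n$ admit a bounded subsequence then they accumulate at a point of $\aset'$ and continuity finishes. You also correctly locate the obstruction: weak convergence is a local notion, so when $\abs{\lambda_n} \to \infty$ the large values of $\abs{f}$ near $\lambda_n$ leave no trace on $\aset'$, and a non-decaying $f$ can a priori vanish on $\aset'$ while $\sup_{\aset+x_n}\abs{f}$ stays bounded below. Your proposed remedy (recenter at $\lambda_n$, extract a secondary limit in $W(\aset)$ via $W(W(\aset)) \subseteq W(\aset)$, iterate to an extremal pair where Case A applies) is the right skeleton, but the decisive step is missing: one pass through the recentering returns you to the \emph{same} configuration --- a nonzero function in $PW^\infty(\Omega)$ vanishing on some member of $W(\aset)$ --- now with the weaker bound $\abs{h(0)} \geqslant 1/(2C)$, and you give no stopping criterion nor an argument for why an extremal pair $(f^\ast,\aset^\ast)$ (whose existence requires a compactness lemma for $W(\aset)$ that you assert but do not prove) must fall into Case A. That termination/extremality argument is precisely the technical content you defer to Beurling. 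So $(\Leftarrow)$ is a proof; $(\Rightarrow)$ is a correct diagnosis of the difficulty together with a plausible plan, not a proof.
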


We will use the following compactness result. See, e.g., \cite[Section 4]{grorro15} for proofs.
\begin{lemma}\label{lemma_wc}
Let $\aset\subset\R^d$ be a separated set and $\{x_n\}_{n\geqslant1} \subset\R^d$. Then there exist a subsequence $\{x_{n_k}\}_{k\geqslant1}$ and a separated set $\aset'\subset\R^d$ such that $\aset+x_{n_k}\weakconv\aset'$.
\end{lemma}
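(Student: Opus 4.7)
The plan is to combine a uniform local-cardinality bound, which comes from separation, with a diagonal compactness argument. The separation hypothesis is what allows each translate $\aset + x_n$ to be controlled inside any fixed ball by a finite, uniform count, and this count is the only ingredient needed beyond routine compactness.

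Let $\sep > 0$ denote the separation of $\aset$. Every translate $\aset + x_n$ inherits that same separation, so a volume-packing argument yields, for each $R > 0$, a constant $M_R$ with $\#\big((\aset + x_n) \cap \bar{B}_R(0)\big) \leqslant M_R$ uniformly in $n$. For a fixed $N \in \N$, I would pass to a subsequence along which the cardinality $\#\big((\aset + x_n) \cap \bar{B}_N(0)\big)$ is constant, equal to some $k_N \leqslant M_N$; enumerating the points and using compactness in $\bar{B}_N(0)^{k_N}$, a further subsequence makes each of the $k_N$ enumerated points converge. This produces a finite limit set $K_N \subset \bar{B}_N(0)$, and since the pairwise distances $\geqslant \sep$ survive in the limit, $K_N$ is itself $\sep$-separated with exactly $k_N$ elements. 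Running this for $N = 1, 2, \dots$ and diagonalizing would yield a single subsequence $\{x_{n_k}\}_{k \geqslant 1}$ such that $(\aset + x_{n_k}) \cap \bar{B}_N(0) \to K_N$ in Hausdorff distance simultaneously for every $N$.

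A short verification shows that $K_M \subseteq K_N$ whenever $M \leqslant N$, because any limit point of $(\aset + x_{n_k}) \cap \bar{B}_M(0)$ is a fortiori a limit point of $(\aset + x_{n_k}) \cap \bar{B}_N(0)$. Consequently, the nested union $\aset' := \bigcup_{N \in \N} K_N$ is a well-defined $\sep$-separated subset of $\R^d$. To conclude $\aset + x_{n_k} \weakconv \aset'$ in the sense of Definition~\ref{weak}, I would fix $R, \eps > 0$, choose $N$ large with $(-R,R)^d \subset \bar{B}_N(0)$, and read off the two inclusions from the Hausdorff convergence on $\bar{B}_N(0)$ once $k$ is large.

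The main (mild) obstacle is the behaviour near the sphere $\partial \bar{B}_N(0)$: an enumerated point of $\aset + x_{n_k}$ very close to that sphere may have a limit sitting exactly on it, so care is needed when relating the finite sets in $(-R,R)^d$ to those in the larger ball. Choosing $N$ with a buffer of width at least $\eps$ between $(-R,R)^d$ and $\partial \bar{B}_N(0)$ neutralizes this ambiguity, after which the verification of both inclusions of Definition~\ref{weak} becomes routine, and $\aset'$ is separated as required. This boundary bookkeeping is precisely what the equivalent formulation~\eqref{eq_weak_conv} is designed to absorb automatically.
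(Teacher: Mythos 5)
The paper states this lemma without proof, deferring to~\cite[Section~4]{grorro15}, so there is no in-text argument to compare against. Your diagonal compactness proof is correct: separation gives the uniform local-cardinality bound; for each $N$ you extract a subsequence along which $(\aset+x_n)\cap\bar{B}_N(0)$ converges in Hausdorff distance to a finite $\sep$-separated set $K_N$; you diagonalize; and the nested union $\aset':=\bigcup_N K_N$ inherits the separation and satisfies both inclusions of Definition~\ref{weak}. The two small verifications you invoke — that $K_M\subseteq K_N$ for $M\leqslant N$, and that $\aset'\cap B_N(0)\subseteq K_N$ — both follow from the fact that a Hausdorff limit of finite sets is exactly the set of limits of points drawn from those sets, so everything checks out. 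The buffer between $(-R,R)^d$ and $\partial\bar{B}_N(0)$ that you flag is not actually needed, since the open $\eps$-neighbourhoods in Definition~\ref{weak} already absorb boundary effects, but it does no harm. This is the standard Arzel\`a--Ascoli-type compactness argument for translates of separated sets, and presumably the same in spirit as the argument in the cited reference.
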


\subsection{Sampling measures}
A Borel measure $\mu$ on $\R^d$ is said to be {\it sampling} for $PW^p(\Omega)$ if
\begin{equation*}
\norm{f}_p\asymp\norm{f}_{L^p(\mu)},\qquad f\in PW^p(\Omega),
\end{equation*}
where $\norm{f}_{L^p(\mu)}=(\int_{\R^d}\abs{f(x)}^p\d\mu)^{1/p}$ if $1\leqslant p<\infty$, and $\norm{f}_{L^\infty(\mu)}=\ess_\mu\abs{f}$.

Viewed in this way, a set $\aset$ is sampling when the associated point measure $\delta_\aset:=\sum_{\lambda\in\aset}\delta_\lambda$ is a sampling measure. Let us first notice that sampling measures are uniformly bounded:

\begin{lemma}\label{supr}
Let $\Omega\subset\R^d$ be a convex centered symmetric body and $1\leqslant p<\infty$. Let $\mu$ be a sampling measure for $PW^p(\Omega)$. Then, for all $R>0$, $\sup_{x\in\R^d}\mu(B_R(x))<\infty$.
\end{lemma}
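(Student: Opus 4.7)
The plan is to exploit the translation invariance of $PW^p(\Omega)$: if $\varphi\in PW^p(\Omega)$ and $x\in\R^d$, then $\varphi(\cdot - x)$ is again in $PW^p(\Omega)$ (translation only modulates $\hat\varphi$, preserving its support), and has the same $L^p$-norm. So a single judiciously chosen test function controls $\mu$ on every translated ball.

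First I would construct a convenient test function. Since $\Omega$ is centered, $0\in\Omega^\circ$, so there is some $r>0$ with $\bar B_r(0)\subset\Omega^\circ$. Pick $g\in C_c^\infty(B_r(0))$ with $g\geqslant0$ and $g\not\equiv0$, and set $\varphi(y):=\int g(\xi)e^{2\pi i\xi\cdot y}\d\xi$. Then $\varphi$ is a Schwartz function whose Fourier transform is supported in $\bar B_r(0)\subset\Omega$, hence $\varphi\in PW^p(\Omega)$ for every $1\leqslant p\leqslant\infty$. Moreover $\varphi$ is continuous with $\varphi(0)=\int g>0$, so after normalization we may assume $\varphi(0)=1$, and by continuity there exists $\delta>0$ (depending only on $\Omega$) such that $|\varphi(y)|\geqslant 1/2$ for every $y\in B_\delta(0)$.

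Next I would apply translation. For any $x\in\R^d$, set $\varphi_x(y):=\varphi(y-x)$. Then $\widehat{\varphi_x}(\xi)=e^{-2\pi ix\cdot\xi}\widehat\varphi(\xi)$, which is supported in the same set as $\widehat\varphi$, so $\varphi_x\in PW^p(\Omega)$ with $\|\varphi_x\|_p=\|\varphi\|_p$, and $|\varphi_x(y)|\geqslant 1/2$ for all $y\in B_\delta(x)$. Using the sampling inequality for $\mu$, there exists $B>0$ (independent of $x$) such that
\begin{equation*}
\int_{\R^d}\abs{\varphi_x}^p\d\mu\leqslant B\,\norm{\varphi_x}_p^p=B\,\norm{\varphi}_p^p.
\end{equation*}
Restricting the integral to $B_\delta(x)$ and using the pointwise lower bound on $\varphi_x$ yields
\begin{equation*}
2^{-p}\mu(B_\delta(x))\leqslant B\,\norm{\varphi}_p^p,
\end{equation*}
so $\sup_{x\in\R^d}\mu(B_\delta(x))<\infty$, with a bound depending only on $\Omega$ and the sampling constants.

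Finally, to pass from the radius $\delta$ to an arbitrary $R>0$, I would cover $B_R(x)$ by finitely many balls of radius $\delta$: there exists an integer $N=N(R,\delta)$, independent of $x$, and points $x_1,\dots,x_N\in B_R(x)$ such that $B_R(x)\subset\bigcup_{i=1}^N B_\delta(x_i)$. By subadditivity and the previous step,
\begin{equation*}
\mu(B_R(x))\leqslant\sum_{i=1}^N\mu(B_\delta(x_i))\leqslant N\cdot 2^p B\,\norm{\varphi}_p^p,
\end{equation*}
which is finite and independent of $x$, completing the proof. There is no real obstacle: the heart of the argument is the observation that $PW^p(\Omega)$ is translation-invariant, so a single Schwartz bump supported in frequency inside $\Omega^\circ$ serves as a universal test function after translation.
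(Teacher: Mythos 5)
Your proof is correct and follows essentially the same approach as the paper: both exploit the translation invariance of $PW^p(\Omega)$, construct a single test function whose Fourier transform is supported in $\Omega$, and apply the upper sampling inequality to uniformly bound $\mu$ on translated balls. The only minor difference is that the paper builds a one-dimensional bandlimited function that never vanishes (hence is bounded below on all of $(-R,R)$) and tensorizes it, avoiding your final covering step, whereas you use a generic Schwartz bump that is bounded below only near the origin and then cover $B_R(x)$ by $\delta$-balls; both routes are equally valid.
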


\begin{proof}
First note that it is enough to construct $f_0\in PW^p(\Omega)$ such that $\abs{f_0}\gtrsim\chi_{(-R,R)^d}$, where the implied constant may depend on $R$. Indeed, once $f_0$ is given, we define $f_x(t):=f_0(t-x)$ and note that $\widehat{f_x}(\xi)=e^{-2\pi i x\xi}\widehat{f_0}(\xi)$ so that $f_x\in PW^p(\Omega)$. Moreover, since $\mu$ is sampling and $1\leqslant p<\infty$, we get
\begin{equation*}
\mu(B_R(x))\leqslant\mu((x-R,x+R)^d)\lesssim\norm{f_x}_{L^p(\mu)}^p\asymp\norm{f_x}_p^p=\norm{f_0}_p^p.
\end{equation*}

Next, to construct $f_0$, we take $\eps>0$ such that $(-\eps,\eps)^d\subset\Omega$. If we find $\ffi\in PW^p(-\eps,\eps)$ such that, for every $R$, $\abs{\ffi}\gtrsim\chi_{(-R,R)}$ then $f_0(x_1,\dots,x_d)=\ffi(x_1)\times\dots\times\ffi(x_d)$ will do.

Now let $\psi:=\chi_{(-\eps/2,\eps/2)}*\chi_{(-\eps/2,\eps/2)}$ so that 

--- $\psi$ has support $(-\eps,\eps)$;

--- $\psi=\widehat{\ffi_0}$ where $\displaystyle\ffi_0(t)=\left(\frac{\sin(\pi\eps x)}{\pi x}\right)^2$; hence, in particular, $\ffi_0\in PW^p(-\eps,\eps)$;

--- $\ffi_0$ is continuous, nonnegative, and $\ffi_0(x)=0$ if and only if $x=k/\eps$, $k\in\Z\setminus\{0\}$. 

Finally, we set $\ffi(x):=\ffi_0(x)+\ffi_0(x+1/2\eps)$.  Then $\ffi$ is a continuous function in $PW^p(-\eps,\eps)$ that never vanishes. This means that, for every $R>0$, $\ffi\gtrsim\chi_{(-R,R)}$. The proof is thus complete.
\end{proof}

We now show that for any sampling measure we can extract a weighted sampling set. The argument mirrors that of \cite{MR1676043} for functions on the Bargmann-Fock space.

\begin{theorem}\label{extract}
Let $\Omega\subset\R^d$ be a convex centered symmetric body, $1\leqslant p<\infty$ and $\mu$ a Borelian measure in $\R^d$.
Let $r>0$  and $\{Q^r_n\}_n$ a (measurable) covering of $\R^d$ with finite multiplicity and $\sup_n\diam Q^r_n\leqslant 
r$, and set $\mu_r^*:=\sum_n\mu(Q^r_n)\dirac_{a^r_n}$ where $a^r_n\in Q^r_n$ are arbitrary points.

Then there exists a constant $C_{\Omega,p}>0$ such that $\mu$ is sampling for $PW^p(\Omega)$ if and only if $\mu_r^*$ is sampling for $PW^p(\Omega)$ when $r<C_{\Omega,p}$.
\end{theorem}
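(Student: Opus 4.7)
The plan is to compare $\norm{f}_{L^p(\mu)}^p$ with $\norm{f}_{L^p(\mu_r^*)}^p=\sum_n\mu(Q_n^r)|f(a_n^r)|^p$ via a cell-wise oscillation estimate, and then to absorb the resulting error into the sampling constants of whichever of the two measures is assumed to satisfy the sampling inequality. By Lemma \ref{supr}, any sampling measure has uniformly bounded mass on unit balls; and if $\mu_r^*$ is sampling then the same property transfers to $\mu$, since $\mu(Q_n^r)=\mu_r^*(\{a_n^r\})$, the cells $Q_n^r$ have diameter at most $r$, and the cover has finite multiplicity. I may therefore work under the standing assumption $K:=\sup_{x\in\R^d}\mu(B_1(x))<\infty$.

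The central estimate I would establish is
\[
\sum_n\int_{Q_n^r}\bigl||f(x)|^p-|f(a_n^r)|^p\bigr|\,\d\mu(x)\lesssim_{\Omega,p}r\cdot K\cdot\norm{f}_p^p,\qquad f\in PW^p(\Omega).
\]
For $x\in Q_n^r$ we have $|x-a_n^r|\leqslant r$, and the elementary identity $||a|^p-|b|^p|\leqslant p\max(|a|,|b|)^{p-1}|a-b|$ together with the mean-value theorem applied to $f$ yields the pointwise bound $p\,r\sup_{B_r(a_n^r)}|f|^{p-1}\sup_{B_r(a_n^r)}|\nabla f|$ on the integrand. Since both $f$ and each $\partial_j f$ lie in $PW^p(\Omega)$, a local Plancherel--P\'olya inequality (derived from Proposition \ref{PPL} via the entire-function structure) supplies a bound $\sup_{B_r(y)}|g|^p\lesssim_\Omega\int_{B_{r+1}(y)}|g(z)|^p\d z$ valid for any $g\in PW^p(\Omega)$. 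Combining these estimates through H\"older's inequality, the finite multiplicity of the cover $\{Q_n^r\}$, and Fubini, the total error reduces to a multiple of $K\cdot(\norm{f}_p^p+\norm{\nabla f}_p^p)$, and the $L^p$-Bernstein inequality $\norm{\nabla f}_p\lesssim_\Omega\norm{f}_p$ for convex centered symmetric bodies closes the argument.

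With the comparison in hand, both implications reduce to balancing constants: if $A\norm{f}_p^p\leqslant\norm{f}_{L^p(\mu)}^p\leqslant B\norm{f}_p^p$, then $\norm{f}_{L^p(\mu_r^*)}^p\in[A-C_{\Omega,p}rK,\ B+C_{\Omega,p}rK]\norm{f}_p^p$, which is a sampling inequality once $r$ is small enough that $C_{\Omega,p}rK<A/2$; the converse direction is identical with the roles of $\mu$ and $\mu_r^*$ swapped. The main obstacle is trading the factor $\sup_{B_r}|f|^{p-1}$ produced by the elementary $|a|^p-|b|^p$ identity for a local $L^p$ integral via Plancherel--P\'olya, so that the uniform local-mass bound $K$ of $\mu$ appears only once and Bernstein's inequality can eliminate the gradient term. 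Careful book-keeping of the constants in these classical bandlimited inequalities is what produces a threshold $C_{\Omega,p}$ depending only on $\Omega$ and $p$.
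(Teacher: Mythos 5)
Your argument is correct in outline and reaches the statement, but by a genuinely different route from the paper's. The paper estimates the cell-wise oscillation through $\abs{f(x)-f(a_n^r)}^p$ (after a $(a+b)^p\leqslant 2^{p-1}(a^p+b^p)$ reduction), using the mean-value representation $f=f\ast\varphi$ with an iterated ball-indicator kernel $\varphi\in C^1_c$ built from the harmonicity of the entire extension of $f$; H\"older on the convolution and the mean value theorem applied to $\varphi$ produce the factor $r$ together with a local $L^p$-integral over a complex slab, which Proposition \ref{PPL} converts back to $\norm{f}_p^p$. You instead work directly with $\bigl|\abs{f(x)}^p-\abs{f(a_n^r)}^p\bigr|\leqslant p\max(\abs{f(x)},\abs{f(a_n^r)})^{p-1}\abs{f(x)-f(a_n^r)}$ and apply the mean value theorem to $f$ itself, so the factor $r$ comes paired with local sups of $\abs{f}$ and $\abs{\nabla f}$; a local Plancherel--P\'olya estimate $\sup_{B_r(y)}\abs{g}^p\lesssim_\Omega\int_{B_{r+1}(y)}\abs{g}^p$ and Bernstein's inequality $\norm{\nabla f}_p\lesssim_\Omega\norm{f}_p$ then close the loop. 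This avoids the harmonicity/convolution device entirely, at the cost of importing two classical facts the paper does not invoke; your version is more modular, the paper's more self-contained with respect to the stated preliminaries.

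Two points need tightening. First, in ``H\"older, finite multiplicity, Fubini'' the order matters: after bounding the sups by $\int_{B_{r+1}(a_n^r)}$-integrals, enlarge $B_{r+1}(a_n^r)$ to $B_{2r+1}(x)$ using $\abs{a_n^r-x}\leqslant r$ for $x\in Q_n^r$, rewrite $\mu(Q_n^r)\int_{B_{r+1}(a_n^r)}\cdots$ as $\int_{Q_n^r}\int_{B_{2r+1}(x)}\cdots\,\d\mu(x)$, and only \emph{then} invoke the finite multiplicity of $\{Q_n^r\}_n$ and Tonelli, ending at $\sup_y\mu(B_{2r+1}(y))\lesssim K$. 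A direct H\"older over the index $n$ would require bounded overlap of the balls $\{B_{r+1}(a_n^r)\}_n$, which is not hypothesized since the cells may be arbitrarily small; for $p>1$ it is also cleaner to split the product of local sups by Young's inequality before these manipulations. Second, because $\{Q_n^r\}_n$ is a covering and not a partition, $\norm{f}_{L^p(\mu)}^p$ and $\sum_n\int_{Q_n^r}\abs{f}^p\,\d\mu$ agree only up to the multiplicity constant $M$, so the interval you assert for $\norm{f}_{L^p(\mu_r^*)}^p/\norm{f}_p^p$ should read $[A-CrK,\,MB+CrK]$ rather than $[A-CrK,\,B+CrK]$; the conclusion survives since only $CrK<A$ is needed, but the bookkeeping should account for this, as the paper does via the preliminary equivalence \eqref{partition}.
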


\begin{remark}\label{rmkballs}
Note that, in any direction of the Theorem, $\sup_{x\in\R^d}\mu(B_R(x))<\infty$ holds for all $R>0$. Indeed, if $\mu$ is sampling for $PW^p(\Omega)$, this is Lemma \ref{supr}. On the other hand, if $\mu_r^*$ is sampling for $PW^p(\Omega)$, Lemma \ref{supr} applied to $\mu_r^*$ reads $\sup_{x\in\R^d}\mu^*_r(B_R(x))<\infty$ for all $R>0$, and thus
\begin{equation*}
\mu(B_\rho(x))\leqslant\sum_{Q^r_n\cap B_\rho(x)\neq\emptyset}\mu(Q^r_n)\leqslant\mu^*_r(B_{\rho+r}(x))\leqslant C_{\rho,r},\quad\forall\rho>0,x\in\R^d,
\end{equation*}
since $\{Q^r_n\}_n$ is a covering with $\sup_n\diam Q^r_n\leqslant r$.	
\end{remark}

\begin{proof}[Proof of Theorem \ref{extract}]
Let $f\in PW^p(\Omega)$. First note that since $\{Q^r_n\}_n$ is a covering of $\R^d$ of finite multiplicity then
\begin{equation}
\label{partition}\int_{\R^d}\abs{f(x)}^p\d\mu(x)\asymp\sum_n\int_{Q^r_n}\abs{f(x)}^p\d\mu(x).
\end{equation}
It follows that, $\mu$ is sampling if 
\begin{equation}
\label{eq:musampling}\int_{\R^d}\abs{f(x)}^p\d x\asymp\sum_n\int_{Q^r_n}\abs{f(x)}^p\d\mu(x),
\end{equation}
and by definition $\mu_r^*$ is sampling if 
\begin{equation}
\label{eq:mursampling}\int_{\R^d}\abs{f(x)}^p\d x\asymp\sum_n\abs{f(a_n^r)}^p \mu(Q_n^r).
\end{equation}

Next, using the inequality $(a+b)^p\leqslant2^{p-1}(a^p+b^p)$ in each $Q^r_n$, we see that
\begin{multline*}
\sum_n\int_{Q^r_n}\abs{f(x)}^p\d\mu(x)\leqslant2^{p-1}\sum_n\int_{Q^r_n}\abs{f(x)-f(a^r_n)}^p+\abs{f(a^r_n)}^p\d\mu(x)\\
\lesssim I(r)+\sum_n\abs{f(a_n^r)}^p\mu(Q_n^r)
\end{multline*}
where $I(r):=\sum_n\int_{Q^r_n}\abs{f(x)-f(a^r_n)}^p\d\mu(x)$. Similarly, 
\begin{multline*}
\sum_n\abs{f(a^r_n)}^p\mu(Q^r_n)=\sum_n\int_{Q^r_n}\abs{f(a^r_n)}^p\d\mu(x)\\
\leqslant 2^{p-1}\sum_n\int_{Q^r_n}\abs{f(x)-f(a^r_n)}^p+\abs{f(x)}^p\d\mu(x)
\lesssim I(r)+\sum_n\int_{Q^r_n}\abs{f(x)}^p\d\mu(x).
\end{multline*}

Hence, to prove that \eqref{eq:musampling} and \eqref{eq:mursampling} are equivalent for some small value of $r$ it is enough to show that $I(r)$ is also sufficiently small. Precisely, we will now show that there exists a function $\phi(r)$ with $\phi(r)\underset{r\to0}{\to}0$ such that
\begin{equation}
\label{wrtr}I(r)\leqslant\phi(r)\int_{\R^d}\abs{f(x)}^p\d x.
\end{equation}
We proceed to prove \eqref{wrtr}. Since $f\in PW^p(\Omega)$ then, by Proposition \ref{PW}, it has a complex extension $f(x+iy)$ which is an entire function of exponential type (with constants depending on $\Omega$). Hence, in particular, $f$ is harmonic on $\C^d$ and satisfies the mean value theorem that we write in the form $f=f\ast\frac{1}{\abs{B_1(0)}}\chi_{B_1(0)}$. Further, iterating this formula, we get $f=f\ast\varphi$ with $\varphi=\frac{1}{\abs{B_1(0)}}\chi_{B_1(0)}\ast\dots\ast\frac{1}{\abs{B_1(0)}}\chi_{B_1(0)}$. Finally, notice that if we iterate
sufficiently many times, $\varphi$ is a compactly supported function of class $\Cc^1$. Thus, for each $n$ and $x \in Q^r_n$ we may write
\begin{align*}
\abs{f(x)-f(a^r_n)}^p&=\abs{(f\ast\varphi)(x)-(f\ast\varphi)(a^r_n)}^p
\\
&\leqslant\left(\int_{\R^d\times\R^d}\abs{f(y+iz)(\varphi(x-y-iz)-\varphi(a^r_n-y-iz)}\d y\d z\right)^p.
\end{align*}
Let $r_0>0$ be such that $\supp(\varphi)\subset B_{r_0}(0)$. Since $\abs{x-a^r_n}\leqslant\diam Q^r_n\leqslant r$ and both $x$ and $a^r_n$ are real, then the function $\varphi(x-y-iz)-\varphi(a^r_n-y-iz)$ is supported in $A_r(x):=\{y+iz\in\C^d:\abs{y-x}<r+r_0,\abs{z}<r_0\}$. This gives us
\begin{equation*}
\abs{f(x)-f(a^r_n)}^p\leqslant\left(\int_{A_r(x)}\abs{f(y+iz)(\varphi(x-y-iz)-\varphi(a^r_n-y-iz)}\d y\d z\right)^p.
\end{equation*}
Next, we apply Hölder's inequality
\begin{equation*}
\abs{f(x)-f(a^r_n)}^p\leqslant\norm{\varphi(x-\cdot-i\cdot)-\varphi(a^r_n-\cdot-i\cdot)}^p_{L^{p'}(A_r(x))}\int_{A_r(x)}\abs{f(y+iz)}^p\d y\d z,
\end{equation*}
where $1/p'+1/p=1$. We bound $\norm{\varphi(x-\cdot-i\cdot)-\varphi(a^r_n-\cdot-i\cdot)}^p_{L^{p'}(A_r(x))}$ by using the fact that $\varphi\in\Cc^1_c$ and then applying the Mean Value Theorem 
\begin{equation*}
\abs{\varphi(x-y-iz)-\varphi(a^r_n-y-iz)}\leqslant r\norm{\nabla\varphi}_\infty,
\end{equation*}
which holds for all $y+iz\in\C^d$. Therefore 
\begin{equation*}
\norm{\varphi(x-\cdot,\cdot)-\varphi(a^r_n-\cdot,\cdot)}^p_{L^{p'}(A_r(x))}\lesssim r^p \abs{A_r(x)}^{p/p'}\asymp r^p(r+r_0)^{dp/p'}r_0^{dp/p'}=:\phi(r).
\end{equation*}
Running the sum over all $n$ we get
\begin{multline*}
I(r)=\sum_n\int_{Q^r_n}\abs{f(x)-f(a^r_n)}^p\d\mu(x)
\\
\leqslant\phi(r)\sum_n\int_{Q^r_n}\int_{A_r(x)}\abs{f(y+iz)}^p\d y\d z\d\mu(x)
\\
\lesssim\phi(r)\int_{\R^d}\int_{A_r(x)}\abs{f(y+iz)}^p\d y\d z\d\mu(x)\\
= \phi(r)\int_{\R^d}\int_{\abs{z}<r_0}\int_{\abs{x-y}<r+r_0}\abs{f(y+iz)}^p\d y\d z\d\mu(x).
\end{multline*}
since $\{Q^r_n\}_n$ has finite multiplicity. Tonelli's theorem then implies
\begin{multline*}
I(r)\lesssim\phi(r)\int_{\R^d}\int_{\abs{z}<r_0}\int_{\abs{x-y}<r+r_0}\d\mu(x)\abs{f(y+iz)}^p\d z\d y
\\
\lesssim\phi(r)\int_{\R^d}\int_{\abs{z}<r_0}\abs{f(y+iz)}^p\d z\d y
\end{multline*}
since $\sup_{x\in\R^d}\mu(B_{r+r_0}(x))<\infty$ (cf. Remark \ref{rmkballs}). Finally applying Proposition \ref{PPL} we get
\begin{equation*}
I(r)\lesssim\phi(r)\int_{\R^d}\abs{f(x)}^p\d x.
\end{equation*}
Eventually multiplying $\phi(r)$ by constants, this gives \eqref{wrtr}.
\end{proof}

\begin{remark}
Note that the only impediment to extend Theorem \ref{extract} to the case $p=\infty$ is Remark \ref{rmkballs} (which does not apply for $p=\infty)$. However, if we suppose in addition that $\sup_{x\in\R^d}\mu(B_R(x))<\infty$ for some $R>0$ (which is a weaker condition to the conclusion of Remark \ref{rmkballs}) then we get back the result of Theorem \ref{extract}.
\end{remark}

\section{Sampling trajectories and spiraling curves}

\subsection{Sampling trajectories}\label{sec_traj}
A {\it curve} is a measurable map $\curve:\R\to\R^d$ (which we do not require to be continuous). A {\it trajectory} $\traj$ is the image of a curve: $\traj:=\curve(\R)$. The restriction of the one-dimensional Hausdorff measure to $\traj$ is denoted $\mu_\traj:=\Hh^1_\traj$. A trajectory $\traj$ is called {\it regular} if there exists $r_0>0$ such that for all $r\leqslant r_0$ there is a constant $c_r>0$ for which
\begin{equation}
\label{regradinf}\inf_{x\in\traj}\mu_\traj(B_r(x))\geqslant c_r,
\end{equation}
and
\begin{equation}
\label{regradsup}\sup_{x\in\R^d}\mu_\traj(B_{r_0}(x))<+\infty.
\end{equation}

A trajectory $\traj$ is called a {\it sampling trajectory} for $PW^p(\Omega)$ if $\mu_\traj$ is a sampling measure, i.e.,
\begin{equation*}
\norm{f}^p_p\asymp\int_{\traj}\abs{f(x)}^p\d\Hh^1(x),\quad f\in PW^p(\Omega),
\end{equation*}
with the usual modification for $p=\infty$.

\subsection{Characterization of sampling trajectories}
Using the general result we proved for sampling measures, Theorem \ref{extract}, we can show how to extract a sampling set from a sampling trajectory. This is Theorem D presented in the Introduction and that we recall here for the convenience of the reader:
\begin{thmcontdiscrete}{\em
Let $\traj\subset\R^d$ be a regular trajectory, $\Omega\subset\R^d$ bounded with positive measure. Then $\traj$ is a sampling trajectory for $PW^2(\Omega)$ if and only if there exists $\aset\subset\traj$ that is a sampling set for $PW^2(\Omega)$. Moreover, $\aset$ can always be chosen to be separated.}
\end{thmcontdiscrete}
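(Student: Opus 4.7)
The plan is to use Theorem~\ref{extract} as the bridge between the sampling measure $\mu_\traj$ and discrete sample sets in $\traj$, and to exploit the regularity conditions \eqref{regradinf}--\eqref{regradsup} to convert weighted point measures into unweighted ones. I treat the two directions separately.

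For the forward implication, assume $\mu_\traj$ is a sampling measure for $PW^2(\Omega)$. Apply Theorem~\ref{extract} with a cube partition $\{Q_n^r\}_n$ of $\R^d$ of diameter $\leqslant r$, for $r$ small enough. If $\mu_\traj(Q_n^r)>0$ then $Q_n^r\cap\traj\neq\emptyset$, so we may select $a_n^r\in Q_n^r\cap\traj$; Theorem~\ref{extract} then gives that $\mu_r^*=\sum_n\mu_\traj(Q_n^r)\delta_{a_n^r}$ is a sampling measure for $PW^2(\Omega)$. The upper regularity \eqref{regradsup} yields $\mu_\traj(Q_n^r)\leqslant\mu_\traj(B_r(a_n^r))\leqslant M$, hence
\begin{equation*}
A\norm{f}_2^2\leqslant\sum_n\mu_\traj(Q_n^r)\abs{f(a_n^r)}^2\leqslant M\sum_n\abs{f(a_n^r)}^2,
\end{equation*}
which is the lower sampling inequality for $\aset^0:=\{a_n^r\}$. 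Since any ball of radius $r$ meets only $O_d(1)$ cubes, $\aset^0$ has bounded multiplicity and thus decomposes into finitely many separated subsets; Proposition~\ref{PPL} applied to each piece supplies the matching upper bound, so $\aset^0$ is a sampling set for $PW^2(\Omega)$.

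For the backward implication, assume $\aset\subset\traj$ is a sampling set for $PW^2(\Omega)$. The upper inequality $\int_\traj\abs{f}^2\d\mu_\traj\lesssim\norm{f}_2^2$ comes from \eqref{regradsup} combined with a local $L^2$-supremum estimate deduced from Proposition~\ref{PPL}. For the lower inequality, fix $r\leqslant r_0$; by \eqref{regradinf}, for each $\lambda\in\aset$,
\begin{equation*}
c_r\abs{f(\lambda)}^2\leqslant 2\int_{B_r(\lambda)}\abs{f}^2\d\mu_\traj+2\int_{B_r(\lambda)}\abs{f(x)-f(\lambda)}^2\d\mu_\traj(x).
\end{equation*}
Summing over $\aset$, bounded overlap of the balls $B_r(\lambda)$ (from the local finiteness of $\aset$, itself a consequence of the upper sampling inequality) turns the first term into $\lesssim\int_\traj\abs{f}^2\d\mu_\traj$, while the oscillation term is $\lesssim\phi(r)\norm{f}_2^2$ by the mean-value and Plancherel--P\'olya argument in the proof of Theorem~\ref{extract}, with $\phi(r)\to0$ as $r\to0$. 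Choosing $r$ small and invoking $\norm{f}_2^2\lesssim\sum_\aset\abs{f}^2$ absorbs the oscillation term, giving $\norm{f}_2^2\lesssim\int_\traj\abs{f}^2\d\mu_\traj$.

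The \emph{moreover} clause (that $\aset$ may be chosen separated) requires extra work: the set $\aset^0$ produced above is only a finite union of separated sets, not necessarily separated itself. To extract a single separated sampling set, one combines the lower sampling inequality for $\aset^0$ (which forces relative density via a Landau-type necessary condition) with Lemma~\ref{lemma_sep_gap} to pull out a separated subset of small gap, and then invokes Beurling's gap Theorem~\ref{sufficientdense} together with Corollary~\ref{bodyshrink} to promote this subset to a sampling set for $PW^2(\Omega)$. The principal technical obstacle is matching the \emph{a priori} gap bound for $\aset^0$ with the Beurling threshold $1/(2R)$ for $R$ slightly larger than $\diam(\Omega)$: this forces a careful choice of the partition scale $r$ and an enlargement trick via Corollary~\ref{bodyshrink}, and is the delicate point of the argument.
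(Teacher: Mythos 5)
Your overall plan — use Theorem~\ref{extract} as the bridge and exploit the regularity conditions \eqref{regradinf}--\eqref{regradsup} to control the weights in $\mu_r^*$ — is the right one, and your forward direction (lower sampling inequality from the upper regularity bound, Bessel bound from bounded multiplicity and Plancherel--P\'olya) is sound as far as it goes. But the construction you choose makes the \emph{moreover} clause unnecessarily hard, and the patch you propose for it does not work.

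The problem with the \emph{moreover} argument is twofold. First, the route you suggest — Landau-type density, Lemma~\ref{lemma_sep_gap}, Theorem~\ref{sufficientdense}, Corollary~\ref{bodyshrink} — leans on tools that require $\Omega$ to be a ball or a convex centered symmetric body, whereas Theorem~D assumes only that $\Omega$ is bounded with positive measure. Corollary~\ref{bodyshrink} is simply not available here. Second, even granting a nice $\Omega$, the fact that $\aset^0$ is a sampling set for $PW^2(\Omega)$ forces $\gap(\aset^0)$ to be \emph{finite} but does not force it below Beurling's sufficient threshold $1/(2R)$ for $\Omega\subset \bar{B}_{R/2}(0)$. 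Beurling's gap theorem is a sufficient, not a necessary, condition for sampling; there are sampling sets of gap larger than $1/(2R)$, and a trajectory $\traj$ might well have such a gap. So no choice of partition scale $r$ nor any enlargement via Corollary~\ref{bodyshrink} can rescue this step: it is a genuine gap, not merely a delicate point.

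The paper avoids all of this by building separation directly into the construction, and this is the one idea you are missing. Instead of a cube partition with arbitrary representatives, take $\{a_n^r\}_n$ to be a \emph{maximal} $r$-separated subset of $\traj$; by maximality the balls $\{B_r(a_n^r)\}_n$ cover $\traj$ with multiplicity $\leqslant 4^d$, and these balls serve as the $\{Q_n^r\}_n$ in Theorem~\ref{extract}. For $r<r_0$, \eqref{regradinf} gives $\mu_\traj(B_r(a_n^r))\geqslant c_r$ and \eqref{regradsup} gives $\mu_\traj(B_r(a_n^r))\leqslant M$, so $\mu_r^*\asymp\delta_{\aset_r}$ with $\aset_r:=\{a_n^r\}_n$ separated by construction. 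Then Theorem~\ref{extract} gives $\mu_\traj$ sampling $\Leftrightarrow \mu_r^*$ sampling $\Leftrightarrow \delta_{\aset_r}$ sampling, and both directions of the theorem plus separateness come out in one stroke. Your construction, by contrast, only yields the one-sided bound $\mu_r^*\lesssim\delta_{\aset^0}$ (you never use \eqref{regradinf} in the forward direction, and indeed cannot, since a cube may meet $\traj$ in a set of arbitrarily small $\Hh^1$-measure), which is why you then have to supply a separate Bessel estimate and a separate argument for separateness. Your sketch of the backward direction via the oscillation estimate from the proof of Theorem~\ref{extract} is reasonable, but once the net construction is adopted it becomes redundant: the equivalence chain already runs both ways.
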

\begin{remark}
Although the statement concerns $PW^2$, we remark that the following proof is still valid for any $PW^p$ with $1\leqslant p\leqslant\infty$.
\end{remark}
\begin{proof}
Due to Theorem \ref{extract} it is enough to show that there exists a separated set $\aset\subset\traj$ such that its point measure $\delta_\aset=\sum_{\lambda\in\aset}\delta_\lambda$ is equivalent to 
$\mu^*_r=\sum_n\mu_\traj(Q^r_n)\dirac_{a^r_n}$ where $a^r_n\in Q^r_n$ and $\{Q^r_n\}_n$ is a covering of 
finite multiplicity of $\R^d$ with $\sup_n\diam Q^r_n<r$, and $r$ is small enough. Moreover, in this case we 
can discard those $Q_n$ which do not meet $\traj$ (since $\mu_\traj(Q_n)=0$ when $Q_n\cap\traj=\emptyset$), and just consider coverings of $\traj$.

Given $r>0$, let $\{a^r_n\}_n\subset\traj$ be maximal with respect to $\abs{a^r_n-a^r_m}\geqslant r,n\neq m$. Hence, the family $\{B_r(a^r_n)\}_n$ is a covering of $\traj$ with finite multiplicity (the covering number being bounded by $4^d$). Next, set $\mu_r^*:=\sum_n\mu_\traj(B_r(a^r_n))\dirac_{a^r_n}$ and $\aset_r:=\{a^r_n\}_n$. Taking $r<r_0$ \eqref{regradinf} reads $\mu_\traj(B_r(a^r_n))\geqslant c_r$ for all $n$, and therefore $\mu_r^*\gtrsim\sum_n\dirac_{a^r_n}=\delta_{\aset_r}$. On the other hand, when $r<r_0$ the condition \eqref{regradsup} reads $\sup_n\mu(B_r(a^r_n))<\infty$ so that also $\mu_r^*\lesssim\delta_{\aset_r}$. In sum for all $r<r_0$ we can construct a finite multiplicity $r$-covering $\{Q^r_n\}_n$ together with a separated set $\aset_r$ such that $\mu^*_r=\sum_{\lambda\in\aset_r}\mu_\traj(Q^r_n)\delta_{\lambda}\asymp\sum_{\lambda\in\aset_r}\delta_{\lambda}=\delta_{\aset_r}$.
\end{proof}

\begin{remark}
\label{rem_suf_dense}
The proof of Theorem D shows that if $\traj$ is a sampling trajectory, then
\emph{any} separated and sufficiently dense $\aset\subset\traj$ is sampling for $PW^2(\Omega)$.
\end{remark}

Theorem D shows the equivalence of two possible models for the mobile sampling problem \cite{gruv15,uv2,uv1}. As a consequence, sampling trajectories lead to concrete reconstruction strategies and numerical implementations, where a finite set of samples is used, and reconstruction is achieved within a precise numerical accuracy \cite{MR3254605,adgaro16,MR3209727,MR2231856, MR2286012, MR2679810}.

\subsection{Spiraling curves}\label{sec_spiraling}
Let us now describe precisely what we mean by a {\it spiraling} trajectory. The properties we list below have been chosen so that several natural sampling strategies are covered. Concentric circles and the Archimedes spiral will be shown to satisfy these properties (see Proposition \ref{prop_ex}). Other examples are depicted in Figure \ref{fig:parallel}.

A regular trajectory $\traj$ is called spiraling if the following conditions hold:
\begin{enumerate}
\renewcommand{\theenumi}{\roman{enumi}}
\item ({\it Escape cone}). There exist $\alpha\in(0,1/4)$ and $\beta\in(0,1)$ such that the portion of $\traj$ contained in the cone
\begin{equation}
\label{eq_cone}S_{\alpha,\beta}:=\{(\rho\cos2\pi\theta,\rho\sin2\pi\theta):\rho\geqslant0\text{, }\beta-\alpha\leqslant\theta\leqslant\beta+\alpha\}
\end{equation}
can be parametrized in polar coordinates as
\begin{equation}
\label{eq_curve}\curve(\theta)=(\rho(\theta)\cos2\pi\theta,\rho(\theta)\sin2\pi\theta)
\end{equation}
with $\theta\in\bigcup_{k\in\N}[k+\beta-\alpha,k+\beta+\alpha]$ and where $\rho(\theta)$ is a non negative 
function of class $C^2$ in each interval. In particular, this means that, inside the escape cone, each {\it piece of the trajectory} $\{\curve(\theta):\theta\in[k+\beta-\alpha,k+\beta+\alpha]\}$, joins one of the boundary lines of the cone to the other one.

We denote by $\vec{l}=(\cos2\pi\beta,\sin2\pi\beta)$ the bisector vector of the escape cone. Every piece of the trajectory in the escape cone intersects the half-line $\R_+\vec{l}$ once.

\item ({\it Asymptotic radial monotonicity}). There exists $k_\rho$ such that for any $\theta\in[\beta-\alpha,\beta+\alpha]$ the sequence $\rho(\theta+k)$ is strictly increasing for $k\geqslant k_\rho$.

In particular, inside the escape cone, two different pieces of the trajectory, $\{\curve(\theta):\theta\in[k+\beta-\alpha,k+\beta+\alpha]\}$ and $\{\curve(\theta):\theta\in[l+\beta-\alpha,l+\beta+\alpha]\}$ with $l\neq k$ and $l,k\geqslant k_\rho$, do not intersect.

\item ({\it Asymptotic flatness}). The curvature of $\curve(\theta)$, denoted by $\kappa(\theta)$, tends to $0$ as $\theta\to+\infty$, i.e. for all $\eps>0$ there exists $k_\eps\in\N$ such that 
$\kappa(\theta)<\eps$ whenever $\theta\in[k+\beta-\alpha,k+\beta+\alpha]$ with $k\geqslant k_\eps$.

\item ({\it Asymptotic equispacing}). There are two parameters $\sep,\rho_0>0$ such that the sequence $\sep_k=\rho(k+\beta)$ has the property 
\begin{equation*}
\lim_k\sep_k-\sep k=\rho_0.
\end{equation*}

\item ({\it Asymptotic velocity}). There exists a direction $\vec{d}\in\mathbb{S}^1$ non-collinear with $\vec{l}$ such that
\begin{equation*}
\lim_{k\to+\infty}\frac{\curve'(k+\beta)}{\abs{\curve'(k+\beta)}}=\vec{d}.
\end{equation*}
\end{enumerate}

\begin{figure}[ht]
\begin{tikzpicture}[scale=0.5, every node/.style={scale=0.5}]
\draw[->](-3,0)--(10,0);
\draw[->](0,-3)--(0,10);
\draw[rotate=30](7.3,-7.3)--(0,0)--(7.3,7.3)node[right]{$S_{\alpha,\beta}$};
\draw[green,dashed,rotate=30](0,0)--(10,0);
\node at (2.7,1.2) {$\sep_2$};
\node at (5,2.5) {$\sep_3$};
\draw[fill=purple,fill opacity=0.1](0,0)--(0.7,0) arc(0:30:0.7);
\node at (1,0.2) {$\beta$};
\draw[fill=purple,fill opacity=0.1,rotate=30](0,0)--(0.8,0) arc(0:45:0.8);
\node at (0.6,0.8) {$\alpha$};
\draw[purple, rotate=30](1,1)..controls(2,0)and(0.5,0)..(3,-3)
(3,3)..controls(3.5,0)and(2.75,0)..(5,-5)
(5,5)..controls(5,0)and(7,0)..(7,-7);
\draw[blue,->,rotate=30](5.9,0)--(5.3,2.5);
\node at (4.5,4) {$\vec{d}$};
\end{tikzpicture}
\caption{Sketch of an spiraling curve.}
\end{figure}
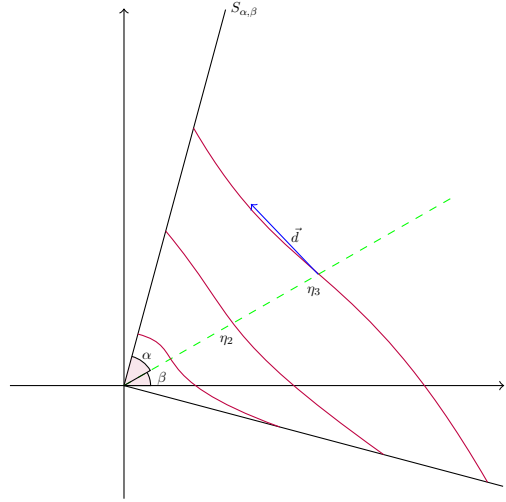

\begin{figure}[ht]
\begin{tikzpicture}[scale=0.5, every node/.style={scale=0.5}]
\draw[->](-3,0)--(10,0);
\draw[->](0,-1)--(0,7);
\draw[rotate=30](9,-3)--(0,0)--(9,3);
\draw[-,purple](1,0)--(1,3);
\draw[-,purple](2,3) arc(0:180:0.5);
\draw[-,purple](2,0)--(2,3);
\draw[-,purple](2,0) arc(180:360:0.5);
\draw[-,purple](3,0)--(3,5);
\draw[-,purple](4,5) arc(0:180:0.5);
\draw[-,purple](4,0.5)--(4,5);
\draw[-,purple](4,0.5) arc(180:360:0.5);
\draw[-,purple](5,0.5)--(5,6.5);
\draw[-,purple](6,6.5) arc(0:180:0.5);
\draw[-,purple](6,1)--(6,6.5);
\end{tikzpicture}
\hspace*{3em}
\begin{tikzpicture}[scale=0.5, every node/.style={scale=0.5}]
\draw[->](-5,0)--(5,0);
\draw[->](0,-5)--(0,5);
\draw[purple](-1,-1)--(-1,1)--(1,1)--(1,-1)--(-1,-1);
\draw[purple](-2,-2)--(-2,2)--(2,2)--(2,-2)--(-2,-2);
\draw[purple](-3,-3)--(-3,3)--(3,3)--(3,-3)--(-3,-3);
\draw[purple](-4,-4)--(-4,4)--(4,4)--(4,-4)--(-4,-4);
\end{tikzpicture}
\caption{A spiraling curve based on a set of parallel lines in a sector (left) and a spiraling curve made of concentric squares (right).}
\label{fig:parallel}
\end{figure}
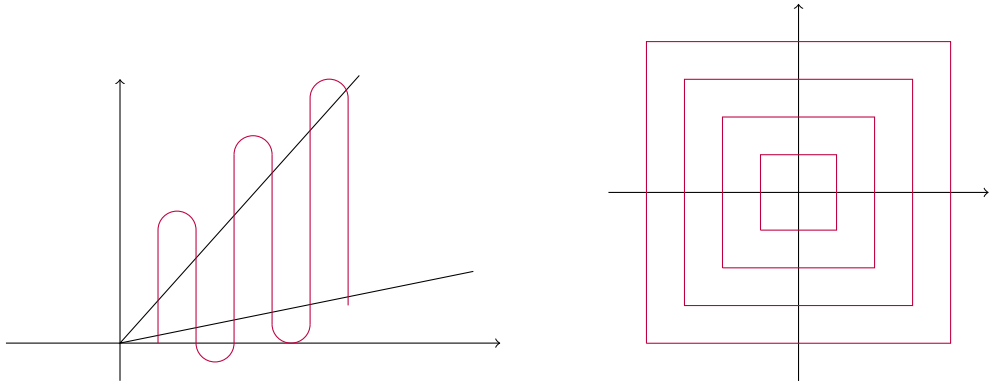

The number $\tau:=\sep\sqrt{1-\scal{\vec{l},\vec{d}}^2}$ is called the {\it asymptotic separation} of $\traj$. For short, we say that $\traj$ is a spiraling trajectory with asymptotic velocity $\vec{d}$ and asymptotic separation $\tau$. Note that those parameters may not be unique.

\begin{remark}
\label{rem_rot}
The class of spiraling curves is invariant under rotations. Indeed, if we rotate a spiraling curve by angle of $2\pi\theta_0$ with $\theta_0\in[0,1)$ then the resulting curve is spiraling with parametrization in the escape cone with $\tilde\beta=\beta-\theta_0$, $\tilde\alpha=\alpha$, and
\begin{equation*}
\tilde{\curve}(\theta)=(\tilde{\rho}(\theta)\cos2\pi\theta,\tilde{\rho}(\theta)\sin2\pi\theta),\quad\theta\in\bigcup_k[k+\tilde{\beta}-\alpha,k+\tilde{\beta}+\alpha],
\end{equation*}
where $\tilde{\rho}(\theta)=\rho(\theta+\theta_0)$. The rotated curve has asymptotic velocity $R_{2\pi \theta_0}\vec{d}$, while the parameters of asymptotic equispacing $\sep$, $\rho_0$ and the asymptotic separation $\tau$ remain unaltered.

Further, spiraling curves are also invariant under some reasonable smooth perturbations of the escape cone. More precisely, let us first assume that $\beta=0$ so that the escape cone is $S_{\alpha,0}$ and let $f:S_{\alpha,0}\to S_{\alpha',0}$ be a $\mathcal{C}^2$ one-to-one function. Assume the following:

\begin{enumerate}
\renewcommand{\theenumi}{\roman{enumi}}
\item $f((\rho\cos2\pi\alpha,\pm\rho\sin2\pi\alpha))=(\rho\cos2\pi\alpha',\pm\rho\sin2\pi\alpha')$, i.e. $f$ sends the boundary of $S_{\alpha,0}$ to the boundary of $S_{\alpha',0}$.

\item $f(x_1,0)=(\ffi(x_1),0)$ for
an asymptotically affine function
$\ffi:\R\to\R$, that is,
$\ffi(x_1)-(ax_1+b)\to0$ when $x_1\to+\infty$ for some constants $a>0$, $b\geqslant0$.

\item If $\rho:[-2\pi\alpha,2\pi\alpha]\to[0,+\infty)$ then the curve $f((\rho(\theta)\cos2\pi\theta,\rho(\theta)\sin\theta))$ admits a parametrization in polar coordinates $(\tilde{\rho}(\theta)\cos2\pi\theta,\tilde{\rho}(\theta)\sin2\pi\theta)$ with $\tilde{\rho}:[-2\pi\alpha',2\pi\alpha']\to[0,+\infty)$.

Note that, as $f$ is one-to-one, sends the bisector on itself and it behaves asymptotically like the increasing linear function $ax_1+b$ in $(x_1,0)$, $f$ asymptotically preserves radial monotonicity.

\item  The Jacobian of $f$ is uniformly bounded from above and below, i.e. there exist $A,B>0$ such that $A\abs{y}\leqslant\abs{J_xf(y)}\leqslant B\abs{y}$ for all $x\in S_{\alpha,0}$, $y\in\R^2$. Moreover $J_{(x_1,0)}f\to M$ when $x_1\to+\infty$, and $M\vec{d}\not=(1,0)$.

\item The Hessian of $f$ goes to $0$ when $x\to\infty$, i.e. $H_xf\to 0$ when $\abs{x}\to+\infty$.
\end{enumerate}

Then, if $\traj$ is a spiraling trajectory with asymptotic velocity $\vec{d}$ and asymptotic equispacing $\sep$, $f(\traj)$ is a spiraling trajectory with asymptotic equispacing $a\sep$ and asymptotic velocity $\frac{M\vec{d}}{\abs{M\vec{d}}}$. As a consequence, we have for example that, combining these smooth perturbations with rotations, spiraling curves are invariant by any linear invertible transformation in $\R^2$.
\end{remark}

\subsection{Examples of spiraling curves}
\begin{proposition}\label{prop_ex}
Let $\sep>0$. Then, the Archimedes spiral $A^\sep$ and the union of circles $O^\sep$ are spiraling 
trajectories with asymptotic separation $\sep$. Further, any $\vec{d}\in\mathbb{S}^1$ can be taken as the asymptotic velocity.
\end{proposition}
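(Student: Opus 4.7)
The plan is to verify, one by one, the six conditions (regularity together with properties (i)--(v)) defining a spiraling trajectory for both $A^\sep$ and $O^\sep$, and then compute the asymptotic separation $\tau$. The clause ``any $\vec{d}\in\mathbb{S}^1$ can be taken as the asymptotic velocity'' will follow from the observation that the resulting velocity depends on the freely chosen bisector angle $\beta$, together with the rotation invariance stated in Remark \ref{rem_rot}.

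Regularity is a local density count. For (\ref{regradinf}), through every point of the trajectory passes a $C^1$-arc, so $\mu_\traj(B_r(x))\gtrsim r$ for small enough $r$. For (\ref{regradsup}), at large radii $A^\sep$ locally resembles a union of parallel arcs at spacing $\sep$, so a ball of fixed radius $r_0$ meets $O(r_0/\sep)$ turns of length $O(r_0)$ each, giving a uniform bound; the argument for $O^\sep$ is essentially identical.

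For the Archimedes spiral, I would use the natural polar parametrization $\rho(\theta)=\sep\theta$, which is $C^\infty$, so for any $\alpha\in(0,1/4)$ and $\beta\in[0,1)$ the portion of $A^\sep$ in $S_{\alpha,\beta}$ decomposes into pieces indexed by $k\in\N$ as required in (i). Property (ii) is immediate since $\rho(\theta+k)=\sep(\theta+k)$ is strictly increasing with $k_\rho=0$; property (iv) follows with $\rho_0=\sep\beta$. For (iii), I would apply the polar-coordinate curvature formula
\begin{equation*}
\kappa=\frac{r^2+2(r')^2-rr''}{(r^2+(r')^2)^{3/2}}
\end{equation*}
to conclude $\kappa(\theta)\asymp 1/\theta\to 0$. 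For (v), differentiating $\curve(\theta)=\sep\theta(\cos 2\pi\theta,\sin 2\pi\theta)$ and isolating the dominant term at $\theta=k+\beta$ shows that the unit tangent converges to $\vec{l}_\perp=(-\sin 2\pi\beta,\cos 2\pi\beta)$; in particular $\vec{d}\perp\vec{l}$, and so $\tau=\sep\sqrt{1-\scal{\vec{l},\vec{d}}^2}=\sep$.

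For the concentric circles, the natural parametrization of the $k$-th arc in the escape cone is $\curve(\theta)=(\sep k\cos 2\pi\theta,\sep k\sin 2\pi\theta)$ for $\theta\in[k+\beta-\alpha,k+\beta+\alpha]$, so $\rho(\theta)=\sep k$ is constant on each interval. Properties (i)--(iv) then become trivial ($\rho_0=0$, and the curvature of the $k$-th circle is $1/(\sep k)\to 0$), while differentiating yields unit tangent $(-\sin 2\pi\beta,\cos 2\pi\beta)=\vec{l}_\perp$ independently of $k$, so again $\tau=\sep$. Finally, given any target $\vec{d}\in\mathbb{S}^1$, write $\vec{d}=(-\sin 2\pi\beta,\cos 2\pi\beta)$ for a unique $\beta\in[0,1)$ and run the analysis above with this $\beta$; alternatively, invoke rotation invariance. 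The whole proof is essentially a bookkeeping verification; the only mildly delicate step is isolating the leading-order term in $\curve'$ for $A^\sep$ to obtain the asymptotic velocity.
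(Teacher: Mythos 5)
Your proposal follows essentially the same route as the paper: write the target $\vec{d}\in\mathbb{S}^1$ as $\vec{l}_\perp$ for a bisector angle $\beta$, verify the five structural conditions of a spiraling curve by direct polar-coordinate computation with $\rho_1(\theta)=\sep\theta$ (resp.\ the step function $\rho_2$), and observe that the asymptotic velocity is perpendicular to $\vec{l}$ so that $\tau=\sep\sqrt{1-\scal{\vec{l},\vec{d}}^2}=\sep$. The one place the paper invests substantially more work is the regularity check \eqref{regradinf}--\eqref{regradsup}: your sketch (arc-length parametrization for the lower bound, counting $O(r_0/\sep)$ turns for the upper bound) is correct in spirit and would need to be fleshed out by the paper's case split between points near and far from the origin and the quantitative comparison between angular separation and Euclidean distance inside a fixed ball, but there is no genuine gap in the idea.
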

\begin{proof}
\noindent {\it Step 1 (Regularity).}
We show first that $A^\sep$ and $O^\sep$ are regular trajectories in the sense of $\S$ \ref{sec_traj}. We
define $\rho_1(\theta):=\sep\theta$ and $\rho_2(\theta):=\sum_{k\in\N}\sep k\chi_{[k,k+1)}(\theta)$ so that
\begin{gather*}
A^\sep=\{(\rho_1(\theta)\cos2\pi\theta,\rho_1(\theta)\sin2\pi\theta):\theta\geqslant0\},
\\
O^\sep=\{(\rho_2(\theta)\cos2\pi\theta,\rho_2(\theta)\sin2\pi\theta):\theta\geqslant0\}.
\end{gather*}

Let us begin by proving that \eqref{regradinf} holds for $A^\sep$. Take $r \in (0,1)$ and $x \in A^\sep$ and assume initially that $\abs{x}\leqslant r/2$. Then
\begin{align*}
\mu(B_r(x))&\geqslant\mu(B_{r/2}(0))=\int_{0}^{r/2}\sqrt{\rho_1'(\theta)^2+(2\pi\rho_1(\theta))^2}\d\theta
\\
&\geqslant\int_{0}^{r/2}2\pi\rho_1(\theta)\d\theta=\pi r^2/4.
\end{align*}
For $\abs{x}>r/2$, we proceed as follows. Let $y\in B_r(x)\cap A^\sep$ and write
\begin{gather*}
x=(\rho_1(\theta_0)\cos2\pi\theta_0,\rho_1(\theta_0)\sin2\pi\theta_0),\quad\theta_0\geqslant0,
\\
y=(\rho_1(\theta)\cos2\pi\theta,\rho_1(\theta)\sin2\pi\theta),\quad\theta\geqslant0.
\end{gather*}
Therefore,
\begin{align*}
\abs{x-y}&\leqslant\abs{\rho_1(\theta_0)\cos2\pi\theta_0-\rho_1(\theta)\cos2\pi\theta}+\abs{\rho_1(\theta_0)\sin2\pi\theta_0-\rho_1(\theta)\sin2\pi\theta}
\\
&\leqslant\abs{\rho_1(\theta_0)}(\abs{\cos2\pi\theta_0-\cos2\pi\theta}+\abs{\sin2\pi\theta_0-\sin2\pi\theta})
\\
&\quad+\abs{\rho_1(\theta_0)-\rho_1(\theta)}(\abs{\cos2\pi\theta}+\abs{\sin2\pi\theta})
\\
&\leqslant(\sep+4\pi\rho_1(\theta_0))\abs{\theta_0-\theta}
\end{align*}
by the Mean Value Theorem. Let us set $r_0:=\frac{r}{\sep+4\pi\rho_1(\theta_0)}$, so that $\abs{\theta_0-\theta}\leqslant r_0$ implies $\abs{x-y}<r$.
This allows us to bound the arc-length of $B_r(x)\cap
A^\sep$ from below by
\begin{align*}
\mu(B_r(x))&\geqslant\int_{\theta_0-r_0}^{\theta_0+r_0}\sqrt{\rho_1'(\theta)^2+(2\pi\rho_1(\theta))^2}
\d\theta
\\
&\geqslant\int_{\theta_0-r_0}^{\theta_0+r_0}2\pi\rho_1(\theta)\d\theta=4\sep\pi\theta_0r_0
\\
&\gtrsim\frac{\rho_1(\theta_0)}{1+\rho_1(\theta_0)}r=\frac{\abs{x}}{1+\abs{x}}r\geqslant\frac{r^2}{2+r}\geqslant\frac{r^2}{3}
\end{align*}
since  $\abs{x}>r/2$, and $r\in(0,1)$. In conclusion, for every $r<1$ and every $x$, $\mu(B_r(x))\gtrsim r^2$.

The argument for $O^\sep$ is similar, this time replacing $\rho_1(\theta)$ with $\rho_2(\theta)$.
We now show \eqref{regradsup}, beginning again with case of $A^\sep$. Note first that it is enough to bound $\mu_{A^\sep}(B_r(x))$ uniformly for $x\in A^\sep$, and some $r>0$. Indeed, since $\gap(A^\sep)=\sep/2$, for every $y\in\R^2$, there exists $x\in A^\sep$ such that $B_r(y)\subset B_{r+\sep/2}(x)$. In addition, any ball of radius $R>r$ can be covered with $(2R/r+2)^2$ balls of radius $r$ (by taking a covering $\{B_r(a_n)\}_n$ maximal with respect to $\abs{a_n-a_m}\geqslant r$ as in the proof of Theorem D).

Let $x=(\rho_1(\theta_0)\cos 2\pi \theta_0, \rho_1(\theta_0)\sin 2\pi \theta_0)$ and let 
$r:=\min\{\sep/100,1\}$. If $\abs{x}\leqslant100$ we simply bound
\begin{equation*}
\mu_{A^\sep}(B_r(x))\leqslant\mu_{A^\sep}(B_{101}(0)).
\end{equation*}
Let us assume then $\abs{x}\geqslant100$. We claim that
\begin{equation}
\label{eq_aaa}
B_r(x) \cap A^\sep\subset\{(\rho_1(\theta)\cos 2\pi \theta, \rho_1(\theta)\sin 2\pi 
\theta):\theta\in(\theta_0-1/\abs{x},\theta_0+1/\abs{x})\}.
\end{equation}
Indeed, if $y=(\rho_1(\theta)\cos2\pi\theta,\rho_1(\theta)\sin2\pi\theta)\in B_r(x)\cap A^\sep$, 
\begin{equation*}
\sep\abs{\theta_0-\theta}=\abs{\rho_1(x)-\rho_1(y)}=\abs{\abs{y}-\abs{x}}\leqslant\abs{x-y}<r\leqslant\eta/100,
\end{equation*}
and therefore $\abs{\theta_0-\theta}\leqslant1/100$. Secondly, since $\abs{x-y}<r$, a clockwise rotation by the angle of $y$ gives
\begin{equation*}
\abs{x-y}=\sqrt{\abs{\rho_1(\theta_0)\cos2\pi(\theta_0-\theta)-\rho_1(\theta)}^2+\abs{\rho_1(\theta_0)\sin2\pi(\theta_0-\theta)}^2}<r,
\end{equation*}
and in particular $\abs{\rho_1(\theta_0)\sin2\pi(\theta_0-\theta)}=\abs{x}\abs{\sin2\pi(\theta_0-\theta)}<r\leqslant1$. Using the bound $\abs{\sin2\pi(\theta-\theta_0)}\geqslant\abs{\theta-\theta_0}$ - valid for
$\abs{\theta-\theta_0}\leqslant1/100$, we conclude that
\begin{equation*}
\abs{\theta_0-\theta}\leqslant\abs{\sin2\pi(\theta-\theta_0)}\leqslant\abs{x}^{-1},
\end{equation*}
and \eqref{eq_aaa} follows. Now we can estimate,
\begin{multline*}
\mu_{A^\sep}(B_r(x))\leqslant\int_{\theta_0-1/\abs{x}}^{\theta_0+1/\abs{x}}\sqrt{\rho_1'(\theta)^2+(2\pi\rho_1(\theta))^2}\d\theta
\\
\leqslant\int_{\theta_0-1/\abs{x}}^{\theta_0+1/\abs{x}}\rho_1'(\theta)+2\pi\rho_1(\theta)\d\theta=\int_{\theta_0-1/\pi\abs{x}}^{\theta_0+1/\pi\abs{x}}\sep+2\pi\sep\theta\d\theta
\\
\leqslant\frac{2\sep}{\abs{x}}+\frac{4\pi\sep\theta_0}{\abs{x}}\lesssim 1.
\end{multline*}
Since $\abs{x}\geqslant100$ and $\sep\theta_0=\abs{x}$. The proof for $O^\sep$ follows similarly.

\noindent {\it Step 2 (The other conditions).}
We choose $\vec{d}\in\mathbb{S}^1$ and verify that $A^\sep$ and $O^\sep$ satisfy the conditions of $\S$\ref{sec_spiraling} with asymptotic velocity $\vec{d}$. Write $\vec{d}=(-\sin2\pi\theta_0,\cos2\pi\theta_0)$ with $\theta_0\in[0,1)$, and $\beta=2\pi \theta_0$. Thus $\vec{l}=\vec{d}_\perp=(\cos2\pi\theta_0,\sin2\pi\theta_0)$.

We start with $A^\sep$. For the escape cone \eqref{eq_cone}, we choose any $\alpha\in(0,1/4)$, and parametrize the portion of the curve inside the cone with $\curve=\curve_{A^\sep}$ as in \eqref{eq_curve}, and using the function $\rho_1(\theta)=\sep\theta$ restricted to each $[k+\theta_0-\alpha,k+\theta_0+\alpha]$, $k\in\N$. Then $\rho_1(\theta)$ is strictly increasing and in particular the monotonicity condition is satisfied. Also, the curvature
\begin{gather*}
\kappa_{A^\sep}(\theta)=\frac{2+(2\pi\theta)^2}{\sep(1+(2\pi\theta)^2)^{3/2}}
\end{gather*}
converges to $0$ as $\theta\to\infty$. Hence, the asymptotic flatness condition holds.
For the asymptotic equispacing condition, we let $\rho_0:=\sep\theta_0$ and simply note that
\begin{equation*}
\rho_1(k+\beta)=\rho_1(k+\theta_0)=\sep(k+\theta_0)=\sep k+\rho_0,
\end{equation*}
for all $k\in\N$, so there is nothing to prove. Finally, we check that $\vec{d}$ is the asymptotic velocity:
\begin{align*}
\lim_k&\frac{\curve_{A^\sep}'(k+\beta)}{\abs{\curve_{A^\sep}'(k+\beta)}}=\lim_k\frac{\curve_{A^\sep}'(k+\theta_0)}{\abs{\curve_{A^\sep}'(k+\theta_0)}}
\\
&=\lim_k\frac{\rho_1'(k+\theta_0)(\cos2\pi\theta_0,\sin2\pi\theta_0)+2\pi\rho_1(k+\theta_0)(-\sin2\pi\theta_0,\cos2\pi\theta_0)}{\sqrt{\rho_1'(k+\theta_0)^2+(2\pi\rho_1(k+\theta_0))^2}}
\\
&=\lim_k\frac{(\cos2\pi\theta_0,\sin2\pi\theta_0)+2\pi(k+\theta_0)(-\sin2\pi\theta_0,\cos2\pi\theta_0)}{\sqrt{1+2\pi(k+\theta_0)^2}}
\\
&=(-\sin2\pi\theta_0,\cos2\pi\theta_0)=\vec{d}.
\end{align*}
We now consider $O^\sep$. Since this curve is rotation invariant, we may assume that $\vec{d}=(0,1)$ and $\vec{l}=(1,0)$. The escape cone \eqref{eq_cone} is then parametrized with $\beta=0$ and any $\alpha\in(0,1/4)$ by setting $\rho_2(\theta)=\sum_k\sep k\chi_{[k-\alpha,k+\alpha]}(\theta)\chi_{[k,k+1)}(\theta)$. Then $\rho_2(\theta+k)$ is increasing for any $\theta\in[-\alpha,\alpha]$ and $\kappa_{O^\sep}(\theta)=\sum_k\frac{1}{\sep k}\chi_{[k-\alpha,k+\alpha]}(\theta)\chi_{[k,k+1)}(\theta)$. Therefore the monotonicity and asymptotic flatness conditions hold. The asymptotic equispacing condition also holds with $\rho_0=0$ because $\rho_2(k)=\sep k$ for all $k\in\Z$. Lastly, we calculate the asymptotic velocity:
\begin{equation*}
\lim_k\frac{\curve_{O^\sep}'(k)}{\abs{\curve_{O^\sep}'(k)}}=\lim_k\frac{\rho_2'(k)(1,0)+2\pi\rho_2(k)(0,1)}{\sqrt{\rho_2'(k)^2+(2\pi\rho_2(k))^2}}=\lim_k\frac{2\pi k(0,1)}{2\pi k}=(0,1)=\vec{d}.
\end{equation*}
\end{proof}

\subsection{Bessel bounds}
The following Bessel bounds follow from \cite[Theorem 3.2]{MR3528398}.

\begin{proposition}\label{JM}
Let $\traj$ be either the spiral $A^\sep$ or the concentric circles $O^\sep$. Then, 
there exists an independent constant $C>0$ such that for all $1\leqslant p\leqslant\infty$, $R>0$ and $f\in PW^p(\bar{B}_R(0))$
\begin{equation*}
\norm{f}_{L^p(\mu_\traj)}\leqslant C(\sep^{-1/p}+R^{1/p})\norm{f}_p
\end{equation*}
with the usual modifications when $p=\infty$. Moreover, the same conclusion holds for the rotated Archimedes spirals $\Rr_{2\pi\theta_0}A^\sep$.
\end{proposition}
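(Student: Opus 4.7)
The plan is to deduce the bound from a Plancherel--P\'olya-type trace inequality combined with a uniform local length estimate for $\mu_\traj$; this matches the content of \cite[Theorem 3.2]{MR3528398} applied to our specific measures. The central ingredient to establish is the pointwise length estimate
\begin{equation*}
\mu_\traj(B_r(x)) \lesssim r + \frac{r^2}{\sep}, \qquad x \in \R^2, \; r > 0,
\end{equation*}
for $\traj \in \{A^\sep, O^\sep\}$, with implicit constants independent of $x$, $r$, and $\sep$.

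For $O^\sep$ this follows by direct counting: a circle $\{|y|=k\sep\}$ meets $B_r(x)$ only when $k\sep \in (|x|-r,|x|+r)$, so at most $\lfloor 2r/\sep\rfloor+1$ circles are relevant, and each contributes an arc of length at most $\min(2\pi k\sep, 4r)$; summing yields the claim. For $A^\sep$ one uses that two consecutive arms of the spiral are radially separated by $\sep$, so the number of arms meeting $B_r(x)$ is $O(1+r/\sep)$, and each arm contributes arc length $\lesssim r$ (directly when $|x|\gtrsim r$, as the arm crosses $B_r(x)$ in a near-straight piece; and by bounding the total inner length by $\int_0^{O(r/\sep)} 2\pi\sep\theta\,d\theta \asymp r^2/\sep$ when $|x|\lesssim r$).

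With this estimate in hand, for $f \in PW^p(\bar B_R(0))$ we invoke the pointwise Plancherel--P\'olya inequality
\begin{equation*}
|f(x)|^p \leq C R^2 \int_{B_{1/R}(x)} |f(y)|^p\, dy
\end{equation*}
(with $C$ independent of $p$, a consequence of the plurisubharmonicity of $|f|^p$ together with a rescaling to the unit band), and apply Fubini to obtain
\begin{equation*}
\int |f|^p\, d\mu_\traj \leq CR^2 \int |f(y)|^p \mu_\traj(B_{1/R}(y))\, dy \leq C(R+1/\sep) \|f\|_p^p.
\end{equation*}
Taking $p$-th roots and using the elementary inequality $(a+b)^{1/p}\leq a^{1/p}+b^{1/p}$ yields the stated bound for $1\leq p<\infty$. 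The case $p=\infty$ is immediate from $|f|\leq \|f\|_\infty$ pointwise. For the rotated spiral $\Rr_{2\pi\theta_0}A^\sep$, the measure $\mu_{\Rr A^\sep}$ is the pushforward of $\mu_{A^\sep}$ by a rigid motion and $PW^p(\bar B_R(0))$ is rotation-invariant, so the same bound transfers verbatim.

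The main technical point is the uniform local length bound: one must treat the small-scale regime $r\ll\sep$ (only one arm/circle contributes, giving $\mu_\traj(B_r(x))\lesssim r$) and the large-scale regime $r\gtrsim\sep$ (many arms contribute, giving $\mu_\traj(B_r(x))\lesssim r^2/\sep$) uniformly in the location of $x$, taking particular care near the origin where the spiral winds densely in angle even though radial separation remains $\sep$.
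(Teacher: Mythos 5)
The paper supplies no proof of this proposition; it simply cites \cite[Theorem 3.2]{MR3528398}. Your proposal gives a self-contained argument in the spirit of that reference, and the structure is sound: the local length estimate $\mu_\traj(B_r(x)) \lesssim r + r^2/\sep$ (uniformly in $x$) is exactly the geometric input the cited result is designed to exploit, and the remaining steps -- sub-mean-value property of the plurisubharmonic function $|f|^p$, Plancherel--P\'olya on horizontal slices, Tonelli, and $p$-th roots with $(a+b)^{1/p}\leq a^{1/p}+b^{1/p}$ -- reconstruct the standard Bessel-bound mechanism. The geometric estimates for $A^\sep$ and $O^\sep$ are correct up to harmless constants, as is the reduction of the rotated spiral by rotation-invariance of $PW^p(\bar B_R(0))$ and of the arclength measure.

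One technical claim is off, though it does not invalidate the argument. You assert that the pointwise estimate
\[
|f(x)|^p \leq CR^2 \int_{B_{1/R}(x)}|f(y)|^p\,dy
\]
holds with $C$ independent of $p$. It does not: the sub-mean-value property for $|f|^p$ takes place over a \emph{complex} ball of some radius $\rho$, and converting the complex-tube integral to a real-variable one via Plancherel--P\'olya (Proposition~\ref{PPL} of the paper) picks up a factor $e^{cp\rho}$; even after rescaling $\rho$ with $p$ the constant acquires polynomial dependence on $p$. This is harmless only because you subsequently take $p$-th roots, under which $e^{cp}$ and $p^{a}$ both contribute bounded factors, so the \emph{final} constant is indeed uniform in $p$. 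You should make this explicit: the uniformity in the conclusion is genuine, but it is not inherited from a $p$-independent pointwise constant; it is recovered at the last step. With this correction the proposal is a valid and more explicit alternative to the paper's citation-only proof.
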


\section{Necessary conditions for sampling on spiraling curves}
\subsection{General results}
In this section we derive necessary conditions for sampling on spiraling curves, comparing the asymptotic equispacing of such a curve to the diameter of the Fourier spectrum. The following key lemma shows that spiraling curves can be locally approximated by unions 
of lines.
\begin{lemma}\label{lemma_wl}
Let $\traj$ be a spiraling trajectory with asymptotic velocity $\vec{d}$ and asymptotic separation
$\tau$. Then the collection of equispaced parallel lines
\begin{equation}
\label{eq_equi_pl}
L_{\vec{d},\tau}:=\{t\vec{d}+\tau k\vec{d}_\perp:t\in\R,k\in\Z\}
\end{equation}
is a weak limit of translates of $\traj$, i.e. $L_{\vec{d},\tau}\in W(\traj)$.
\end{lemma}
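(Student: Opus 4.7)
The plan is to translate $\traj$ by $-\sep_k\vec{l}$, where $\sep_k:=\rho(k+\beta)$ is the radial position at which the $k$-th piece of $\traj$ crosses the bisector, and show that $\traj-\sep_k\vec{l}\weakconv L_{\vec{d},\tau}$ as $k\to\infty$. Fix $R>0$ and pick $x_0\in\R^2$; by the characterization \eqref{eq_weak_conv} it suffices to compute the Hausdorff distance between $(\traj-\sep_k\vec{l})\cap\bar{B}_R(x_0)$ and $L_{\vec{d},\tau}\cap\bar{B}_R(x_0)$ (together with the boundary spheres) and check it tends to $0$. Since $\sep_k\to+\infty$, for $k$ large the ball $\bar{B}_R(\sep_k\vec{l})$ is contained in the escape cone $S_{\alpha,\beta}$, so only the pieces $\{\curve(\theta):\theta\in[j+\beta-\alpha,j+\beta+\alpha]\}$ contribute, and by asymptotic equispacing $\sep_j=\sep j+\rho_0+o(1)$ combined with asymptotic radial monotonicity the relevant indices are exactly those $j$ with $|j-k|\leqslant R/\sep+O(1)$.

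The heart of the matter is to show that, for each relevant $j$, the piece $\curve|_{[j+\beta-\alpha,j+\beta+\alpha]}$, once restricted to $\bar{B}_R(\sep_k\vec{l})$, is Hausdorff-close to the straight line $\ell_j:=\sep_j\vec{l}+\R\vec{d}$. Here I would combine two ingredients. First, by asymptotic velocity, the unit tangent $\curve'(j+\beta)/|\curve'(j+\beta)|$ tends to $\vec{d}$ as $j\to\infty$; since the relevant $j$ satisfy $j\geqslant k-O(1)\to\infty$, all these tangents are uniformly close to $\vec{d}$. Second, by asymptotic flatness, the curvature $\kappa(\theta)$ tends to $0$ on all these pieces; a standard estimate for a $C^2$ plane curve shows that the deviation from its tangent line at the base point, over an arc of length $L$, is bounded by $\frac{1}{2}(\sup\kappa)L^2$. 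Taking $L$ of order $R$ (which controls arc length inside the ball, via the bound $\mu_\traj(\bar{B}_R(\sep_k\vec{l}))\lesssim R$ coming from regularity \eqref{regradsup}), this deviation tends to $0$ uniformly in $j$.

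Translating by $-\sep_k\vec{l}$ sends $\ell_j$ to the line $(\sep_j-\sep_k)\vec{l}+\R\vec{d}$, and asymptotic equispacing gives $\sep_j-\sep_k\to\sep(j-k)$. Decomposing $\vec{l}=\scal{\vec{l},\vec{d}}\vec{d}+\scal{\vec{l},\vec{d}_\perp}\vec{d}_\perp$, the same line can be written $\sep(j-k)\scal{\vec{l},\vec{d}_\perp}\vec{d}_\perp+\R\vec{d}$, and since $|\scal{\vec{l},\vec{d}_\perp}|=\sqrt{1-\scal{\vec{l},\vec{d}}^2}=\tau/\sep$, the $\vec{d}_\perp$-offset equals $\pm\tau(j-k)$. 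Letting $m=\pm(j-k)$ run over integers as $j$ does, the family of limit lines is exactly $L_{\vec{d},\tau}$. Putting these approximations together yields, for every $\eps>0$ and $R>0$, that $(\traj-\sep_k\vec{l})\cap\bar{B}_R(x_0)$ and $L_{\vec{d},\tau}\cap\bar{B}_R(x_0)$ are within Hausdorff distance $\eps$ for $k$ large, which is precisely Definition \ref{weak}.

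The main obstacle is the quantitative control in the previous paragraph: one needs to verify that the tangent-line approximation, on the scale of a ball of fixed radius $R$, has an error that is uniform across all $O(R/\sep)$ pieces that enter the ball, and that the asymptotic-equispacing residual $o(1)$ in $\sep_j-\sep_k$ is controlled uniformly in $|j-k|\leqslant R/\sep+O(1)$ (not only for each fixed $j-k$). Both points follow from the definitions once one observes that, as $k\to\infty$, all indices in play tend to $\infty$ as well, so the $o(1)$ terms coming from flatness, velocity, and equispacing can be absorbed into a single error that vanishes with $k$.
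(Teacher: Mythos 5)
Your proposal follows essentially the same strategy as the paper's proof: translate $\traj$ by $\sep_k\vec{l}$ (the paper first rotates so that $\vec{l}=(1,0)$ and then translates by $(\sep_k,0)$), Taylor-expand each arc-length reparametrized piece $\psi_j$ about its bisector crossing, control the linear term via asymptotic velocity and the quadratic remainder via asymptotic flatness (curvature), and use asymptotic equispacing to identify the spacing of the limiting lines. Your algebraic rewriting of $L_{\vec{d},\tau}$ as the family of lines through the points $\sep m\vec{l}$ in direction $\vec{d}$ is exactly the paper's Step~1.

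The place where your sketch underestimates the difficulty is the ``hard'' inclusion $(\traj-\sep_k\vec{l})\cap(-R,R)^2\subset L_{\vec{d},\tau}+B_\eps(0)$, which requires two things: (a) identifying which pieces of $\traj$ can meet the ball $\bar B_R(\sep_k\vec{l})$, and (b) showing that, for those pieces, the intersection with the ball is captured by arc-length parameters $|t|\lesssim R$ so that the Taylor estimate is applicable. Your appeal to the regularity bound on $\mu_\traj$ controls the \emph{total} length of $\traj$ inside the ball, but does not, by itself, rule out a piece leaving the ball and re-entering at a much larger arc-length parameter (where the tangent-line approximation has a large error), nor does it identify the index range of the intersecting pieces. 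The paper devotes its Step~4 precisely to this, combining radial monotonicity (to order the pieces inside the escape cone) with a dedicated cone construction (the segments $A,B,C$ and the subcone $S$) to pin down both the relevant indices and the relevant arc-length ranges; this is a genuine geometric step, not merely a matter of absorbing $o(1)$ errors uniformly. A further small slip: \eqref{regradsup} gives a uniform bound on $\mu_\traj(B_{r_0}(x))$ for one fixed $r_0$, so covering a radius-$R$ ball yields $\mu_\traj(\bar B_R(x))\lesssim R^2$ in the plane, not $\lesssim R$ (though any finite bound would serve here). Aside from these points, your plan is sound and matches the paper's.
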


\begin{proof}
Throughout the proof we use the notation of $\S$\ref{sec_spiraling}. 

\medskip

\noindent{\it Step 1.} {\it Reduction of the problem.}

\medskip

Let us first slightly simplify the setting. Since the notion of spiraling trajectory is invariant under rotation, we may assume that the escape cone is $S_{\alpha,0}$, thus, $\beta=0$ and $\vec{l}=\vec{i}:=(1,0)$. We then write $\vec{d}=(d_1,d_2)$. As $1=\abs{\vec{d}}^2=d_1^2+d_2^2$ and $\vec{d}$ is not collinear with $\vec{l}$, then $d_2\neq0$. Thus $\vec{i}=d_1\vec{d}-d_2\vec{d}_\perp$ and $\vec{d}_\perp=\frac{d_1}{d_2}\vec{d}-\frac{1}{d_2}\vec{i}$. It follows that, if $\sep=\tau/\abs{d_2}$, then
\begin{equation*}
L_{\vec{d},\tau}=\tilde L_{\vec{d},\sep}:=\{s\vec{d}+\sep j\vec{i}:s\in\R\text{, }j\in\Z\}.
\end{equation*}
Note that $\tau=\sep\abs{d_2}=\sep\sqrt{1-d_1^2}=\sep\sqrt{1-\scal{\vec{d},\vec{l}}^2}$.

\medskip

\noindent{\it Step 2.} {\it Reparametrization of the trajectory.}

\medskip

We set $\aset_k:=\traj-(\sep_k,0)$ and, from now on, we fix $R,\eps>0$. Our aim is to show that there exists $k_0$ (depending on $R,\eps$) such that; for all $k\geqslant k_0$,
\begin{gather}
\label{curve-lines}\aset_k\cap(-R,R)^2\subset \tilde L_{\vec{d},\sep}+B_\eps(0),
\\
\label{lines-curvesa}\tilde L_{\vec{d},\sep}\cap(-R,R)^2\subset\aset_k+B_\eps(0).
\end{gather}

\smallskip

First, a simple computation shows that
\begin{equation*}
\tilde L_{\vec{d},\sep}\cap(-R,R)^2\subset L_0:=\{t\vec{d}+\sep j\vec{i}:\ \abs{t}\leqslant T_R\text{, }j=-J_R,\ldots,J_R\}
\end{equation*}
with $T_R=\frac{R}{\abs{d_2}}$ and $J_R=\left\lceil\frac{2R}{\sep}\left(1+\abs{\frac{d_1}{d_2}}\right)\right\rceil$.
Instead of \eqref{lines-curvesa}, we will thus prove
\begin{equation}
\label{lines-curves}L_0\subset\aset_k+B_\eps(0).
\end{equation}

Next, as $\sep_k\to+\infty$ and $L_0$ is a bounded set, there is a $k_1$ such that, if $k\geqslant k_1$, the translate of the escape cone by $\sep_k$ contains $L_0$.  Moreover, as $\sep_k-k\sep\to\rho_0$, we can assume that, for $k\geqslant k_1$, $\abs{\sep_k-k\sep-\rho_0}<\eps/2$.
From now on, we will assume that $k\geqslant k_1$.

For each $n$ let $\psi_n:I_n\to\R^2$ be a re-parametrization by arc-length of $\curve(\theta)$ on the interval $[n-\alpha,n+\alpha]$ such that $0\in I_n$ and $\psi_n(0)=\curve(n)=(\sep_n,0)$. 
Write $I_n^\pm=I_n\cap\R^\pm$. As the restriction of $\psi_n$ to $I_n^-$ joins the line $\R(\cos\alpha,-\sin\alpha)$ to the point $(\sep_n,0)$ and $\sep_n\to+\infty$, it follows that the length of $I_n^-$, $\abs{I_n^-}\to+\infty$. Similarly, $\abs{I_n^+}\to+\infty$. (See Figure \ref{deltak}).

\begin{figure}[ht]
\begin{tikzpicture}[scale=0.5, every node/.style={scale=0.5}]
\draw[->](-3,0)--(10,0)
node[pos=0.8,below]{$\sep_n\to+\infty$};
\draw[->](0,-5)--(0,6);
\draw(5.5,-5.5)--(0,0)--(6.2,6.2);
\draw[purple](6,6)..controls(8,0)and(5,0)..(5,-5);
\draw[dashed,green](3.1,3.1)--(6.2,0)--(3.1,-3.1);
\draw[green](3,3)--(3.1,2.9)--(3.2,3)
(3,-3)--(3.1,-2.9)--(3.2,-3);
\end{tikzpicture}
\caption{The arc-length of $\psi_n$ is at least as large as twice the distance of $\sep_n$ to $S_{\alpha,0}$.}
\label{deltak}
\end{figure}
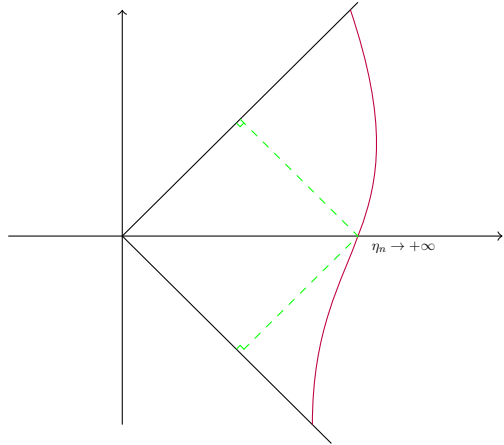

Therefore, there exists $n_1$ such that, for all $n\geqslant n_1$,
\begin{equation*}
[-2T_R,2T_R]\subset I_n.
\end{equation*}

A Taylor expansion of $\psi_n$ at $0$ reads
\begin{equation*}
\abs{\psi_n(t)-(\sep_n,0)-t\psi_n'(0)}\leqslant \frac{\abs{t}^2}{2}\sup_{s\in I_n}\abs{\psi''_n(s)}.
\end{equation*}
This implies
\begin{equation}
\label{eq:tayl}
\abs{\psi_n(t)-(\sep_n,0)-t\vec{d}}\leqslant\abs{t}\cdot\abs{\psi_n'(0)-\vec{d}\:}+\frac{\abs{t}^2}{2}\sup_{s\in I_n}\abs{\psi''_n(s)}.
\end{equation}
On the other hand, by definition we know that
\begin{equation*}
\psi_n'(0)=\frac{\curve'(n)}{\abs{\curve'(n)}},\quad\sup_{s\in I_n}\abs{\psi''_n(s)}=\sup_{s\in[n-\alpha,n+\alpha]}\kappa(s).
\end{equation*}
As $\curve$ is a spiraling curve, $\lim_n\abs{\psi_n'(0)-\vec{d}\:}=\lim_k\sup_{s\in I_n}\abs{\psi''_n(s)}=0$ and $\lim_n \sep_n-\sep n=\rho_0$. Therefore, there exists $n_2\geqslant n_1$ such that, for $n\geqslant n_2$
\begin{equation*}
\abs{\sep_n-\sep n-\rho_0}<\eps/6,\quad\abs{\psi_n'(0)-\vec{d}}<\eps/(12T_R),\quad\sup_{s\in I_n}\abs{\psi''_n(s)}<\eps/(12T_R^2).
\end{equation*}
It then follows from \eqref{eq:tayl} that, for all $t\in[-2T_R,2T_R]$, for $n\geqslant n_2$
\begin{equation}
\label{eq:eps}\abs{\psi_n(t)-(\sep n+\rho_0,0)-t\vec{d}}<\eps/2.
\end{equation}
In other words, the trajectory stays at distance at most $\eps/2$ from the segment joining the lines $\{x_2=-R\}$ to the line $\{x_2=R\}$, passing through the point $(\sep n+\rho_0,0)$ and directed by $\vec{d}$. See Figure \ref{fig:eps}.

\begin{figure}[ht]
\begin{tikzpicture}[scale=0.5, every node/.style={scale=0.5}]
\draw[blue](3,4)--(2,-4);
\draw[->,blue](0,0)--(0.15,1.5);
\node at (0.6,1.6) {$\vec{d}$};
\draw[red](3.5,3.9)..controls(2,2)and(3,-2)..(1.5,-4.1);
\draw[dashed,green](0,-4)--(4,-4);
\draw[dashed,green](0,4)--(4,4);
\draw[dashed,green](0,2)--(4,2);
\draw[dashed,green](0,-2)--(4,-2);
\draw[dashed,green](0,0)--(4,0);
\node at (-0.5,2) {$R$};
\node at (-0.5,4) {$2R$};
\node at (-0.5,0) {$0$};
\node at (-0.5,-2) {$-R$};
\node at (-0.5,-4) {$-2R$};
\draw[gray, opacity=0.2,fill=gray,fill opacity=0.2](2.25,4)--(1.25,-4) arc(180:360:0.75) (2.75,-4)--(3.75,4) arc(0:180:0.75);
\end{tikzpicture}
\caption{Illustration of \eqref{eq:eps}.}
\label{fig:eps}
\end{figure}

\medskip

\noindent{\it Step 3.} {\it Proof of \eqref{lines-curves}.}

\medskip

We are now in position to prove \eqref{lines-curves}. Let $j\in\{-J_R,\ldots,J_R\}$ and $n=k+j$.
If $k\geqslant k_1+J_R$ then $n\geqslant k_1$ so that \eqref{eq:eps} holds, that is
\begin{equation*}
\abs{\psi_{k+j}(t)-((k+j)\sep+\rho_0,0)-t\vec{d}}<\eps/2
\end{equation*}
for $t\in[-T_R,T_R]$. Additionally, if $k\geqslant n_2+J_R$, $\abs{\sep_k-k\sep-\rho_0}<\eps/2$.
Altogether,
\begin{equation*}
\abs{t\vec{d}+j\sep\vec{i}-(\psi_{k+j}(t)-\sep_k \vec{i})}\leqslant\abs{t\vec{d}+((k+j)\sep+\rho_0,0)-\psi_{k+j}(t)}+\abs{k\sep+\rho_0-\sep_k}<\eps
\end{equation*}
which is exactly \eqref{lines-curves}.

\medskip

\noindent{\it Step 4.} {\it Proof of \eqref{curve-lines}.}

\medskip

It is a bit more complicated to identify the pieces of the trajectory that go through a given square. The first part of this step
consists in identifying those pieces.

We want to prove that, given $R,\eps>0$ for $k$ sufficiently large
\begin{equation*}
(\traj-(\sep_k,0))\cap(-R,R)^2\subset\tilde L_{\vec{d},\sep}+B_\eps(0).
\end{equation*}
Defining $Q_{k,R}:=(\sep_k-R,\sep_k+R)\times(-R,R)$, this can be rewritten as
\begin{equation}
\label{eq:curve-line2}\traj\cap Q_{k,R}\subset\{t\vec{d}+(\sep j+\sep_k)\vec{i}:t\in\R,j\in\Z\}+B_\eps(0).
\end{equation}
First, let $n_0$ be the smallest $n\in\N$ for which $\{\psi_n(t):t\in I_m\}\cap Q_{k,R}\neq\emptyset$. In particular $(\sep_k,0)$ is at distance at most $R$ from the trajectory of $\psi_{n_0}(t)$. As $\sep_k\to+\infty$ when $k\to\infty$, this implies that $n_0\to\infty$ when $k\to\infty$. Then, we can take $k$ large enough so that $n_0\geqslant k_\rho$, where $k_\rho$ is the constant of radial monotonicity. This means that
\begin{equation*}
\traj\cap Q_{k,R}\subset\bigcup_{n\geqslant k_\rho}\{\psi_n(t): t\in I_n\}\cap Q_{k,R},
\end{equation*}
and thus we can use the monotonicity property to order from left to right all trajectories $\{\psi_n(t):t\in I_n\}$ which intersect $Q_{k,R}$ (see Figure \ref{ConstructionStep4c}).

\begin{figure}[ht]
\begin{tikzpicture}[scale=0.5, every node/.style={scale=0.5}]
\draw[->](-3,0)--(11,0);
\draw[fill=gray,fill opacity=0.2](3,-2)--(7,-2)--(7,2)--(3,2)--(3,-2);
\draw[dashed,green](0,2)--(11,2);
\draw[dashed,green](0,-2)--(11,-2);
\node at (-0.5,2) {$R$};
\draw[purple](5.75,4)..controls(4.75,2)and(4.9,-2)..(4.25,-4);
\draw[purple](4.6,4)..controls(3.9,2)and(3.7,-2)..(3.25,-4);
\draw[purple](6.5,4)..controls(5.8,2)and(5.7,-2)..(5.1,-4);
\draw[purple](3.75,4)..controls(2.65,2)and(3,-2)..(2.4,-4);
\draw[purple](7.5,4)..controls(7.1,2)and(6.2,-2)..(6.1,-4);
\node at (5.1,-0.2) {$\sep_k$};
\end{tikzpicture}
\caption{Curves in $Q_{k,R}$ ordered from left to right.}
\label{ConstructionStep4c}
\end{figure}
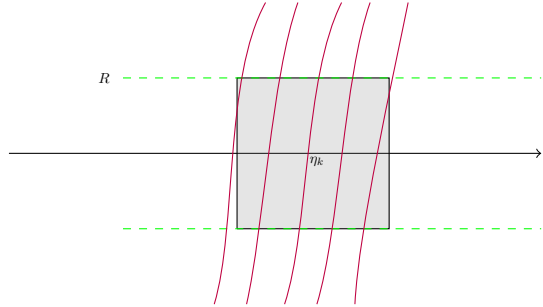
From now on, we will assume that $n\geqslant k_\rho$ for any piece of trajectory $\{\psi_n(t): t\in I_n\}$ considered.

Now, let us reduce the picture. For this, we temporarily fix $k$ and write
$z=\sep_k$, $z_\pm=z\pm R$, $\tau=z_+/z_-$. As $\sep_k\to+\infty$, we can choose $k$
so that $\tau$ is as near to $1$ as we want.

Let $\mathcal{D}_\pm=\R_+(z_-,\pm R)\cup\{(0,0)\}$ for the half-lines starting at $0$ and through the left-corners of $Q_{k,R}$ and let $S$ be the subcone delimited by these half-lines (and containing the positive $x_1$-axis).

Let $a=z_--\eps-2R\abs{d_1/d_2}$. The choice of $a$ has been made as follows: let $\ell_a$ be the line through $(a,0)$ and directed by $\vec{d}$, and set $A:=\ell_a\cap\{\abs{x_2}\leqslant 2R\}$. Then $A\subset\{x_1\leqslant z_--\eps\}$ and $A\cap\{x_1= z_--\eps\}\neq\emptyset$.

Similarly, let $b=z_++\eps+2R\abs{d_1/d_2}=z_-+\eps+2R\left(1+\abs{d_1/d_2}\right)$. Again, let $\ell_b$ be the line directed by $\vec{d}$ through $(b,0)$ and set $B:=\ell_b\cap\{\abs{x_2}\leqslant 2R\}$. Additionally, here we define $C:=\ell_b\cap S$. Then

--- $B$ is at distance at least $\eps$ on the right of $Q_{k,R}$, that is, $B\subset\{x_1\geqslant z_++\eps\}$. Moreover $B\cap\{x_1=z_++\eps\}\neq\emptyset$. Also $B\subset\{x_1\leqslant b+2R\abs{d_2/d_1}\}$ and $B\cap\{x_1=b+2R\abs{d_2/d_1}\}\neq\emptyset$. In particular, $B$ joins the boundaries of the strip $\{z_++\eps\leqslant x_1\leqslant b+2R\abs{d_2/d_1}\}$. 

--- $C$ is included in the strip $\{\frac{bd_2}{z_-d_2+Rd_1}R\leqslant x_1\leqslant \frac{bd_2}{z_-d_2-Rd_1}R\}$. Note that, when $k\to+\infty$, $b,z_-\to+\infty$ but $b/z_-\to 1$. It follows that, for $k$ large enough, $C\subset B$.

Note also that, if $k$ is large enough, then $A,B,C$ are included in the larger escape cone, see Figure \ref{ConstructionStep4}.

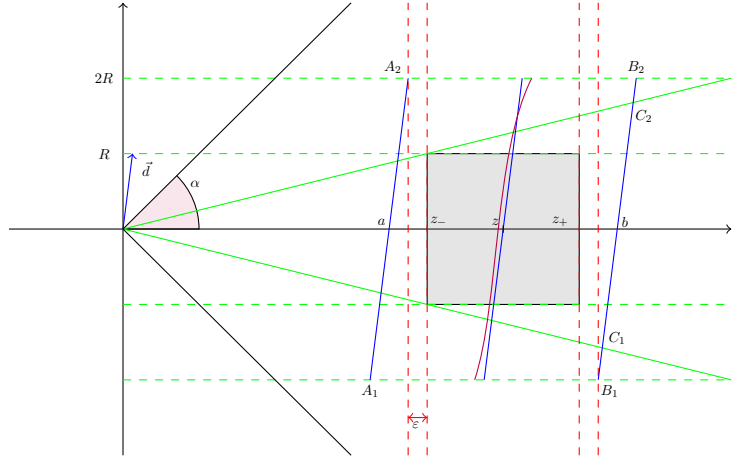
\begin{figure}[ht]
\begin{tikzpicture}[scale=0.5, every node/.style={scale=0.5}]
\draw[fill=purple,fill opacity=0.1](0,0)--(2,0) arc(0:45:2);
\node at (1.9,1.2) {$\alpha$};
\draw[->](-3,0)--(16,0);
\draw[->](0,-6)--(0,6);
\draw[fill=gray,fill opacity=0.2](8,-2)--(12,-2)--(12,2)--(8,2)--(8,-2);
\draw[green](0,0)--(16,4);
\draw[green](0,0)--(16,-4);
\draw[dashed,green](0,-4)--(16,-4);
\draw[dashed,green](0,4)--(16,4);
\draw[dashed,green](0,2)--(16,2);
\draw[dashed,green](0,-2)--(16,-2);
\node at (-0.5,2) {$R$};
\node at (-0.5,4) {$2R$};
\draw[dashed,red](7.5,-6)--(7.5,6);
\draw[dashed,red](8,-6)--(8,6);
\draw[<->,red](7.50,-5)--(8,-5);
\node at (7.7,-5.2) {$\eps$};
\draw[dashed,red](12.5,-6)--(12.5,6);
\draw[dashed,red](12,-6)--(12,6);
\draw[blue](7.5,4)--(6.5,-4);
\draw[blue](12.5,-4)--(13.5,4);
\draw[blue](10.5,4)--(9.5,-4);
\draw[purple](10.75,4)..controls(9.75,2)and(9.9,-2)..(9.25,-4);
\node at (6.5,-4.3) {$A_1$};
\node at (7.1,4.3) {$A_2$};
\node at (12.8,-4.3) {$B_1$};
\node at (13.5,4.3) {$B_2$};
\node at (13,-2.9) {$C_1$};
\node at (13.7,3) {$C_2$};
\draw[->,blue](0,0)--(0.25,2);
\node at (0.6,1.6) {$\vec{d}$};
\draw[black](10,-0.1)--(10,0.1);
\node at (9.8,0.2) {$z$};
\node at (8.3,0.2) {$z_-$};
\node at (11.5,0.2) {$z_+$};
\node at (13.2,0.2) {$b$};
\node at (6.8,0.2) {$a$};
\draw (0,0)--(6,6);
\draw (0,0)--(6,-6);
\end{tikzpicture}
\caption{The segments are $A=[A_1,A_2]$, $B=[B_1,B_2]$, $C=[C_1,C_2]$.}
\label{ConstructionStep4}
\end{figure}

Next, recall from \eqref{eq:eps} that, if $n$ is large enough, then for $\abs{t}\leqslant 2T_R$,
\begin{equation*}
\abs{\psi_n(t)-(\sep n+\rho_0,0)-t\vec{d}}<\eps/2.
\end{equation*}
In particular, as $a,b\to+\infty$ when $k\to+\infty$, if $k$ is large enough, then this holds for $a/\sep<n<b/\sep$. In other words, for those $n$'s, the part of the trajectory $\{\psi_n(t):\abs{t}\leqslant 2T_R\}$ stays at distance less than $\eps/2$ of the segments $\{(n\eta+\rho_0,0)+t\vec{d}:\abs{t}\leqslant 2T_R\}$. Those segments are all included in the rhombus delimited by $A,B$ and the lines $\{x_2=\pm 2R\}$. In particular, they all join the 2 boundaries of the cone $S$. Further, the monotonicity property shows that the curves $\{\psi_n(t):\abs{t}\leqslant 2T_R\}\cap S$ are ordered from the left to the right when $n$ goes from $a/\eta$ to $b/\eta$.

Now let $\{\psi_m(t):t\in I_m\}$ be a piece of the trajectory that intersects $Q_R$. It therefore also intersects $S$. From the monotonicity property $\{\psi_m(t):t\in I_m\}$ is either

--- on the left of $\{\psi_{\lceil a/\eta\rceil}(t):t\in I_{\lceil a/\eta\rceil}\}\cap S$ 

--- or on the right of $\{\psi_{\lfloor b/\eta\rfloor}(t):t\in I_{\lfloor b/\eta\rfloor}\}\cap S$

--- or is one of $\{\psi_n(t):t\in I_n\}\cap S$, $a/\eta< n< b/\eta$.

But, in the first two cases, $\{\psi_m(t):t\in I_m\}$ would not intersect $Q_R$ so that the only trajectories that may intersect $Q_R$ are $\{\psi_n(t):t\in I_n\}\cap S$, $a/\eta<n<b/\eta$.

Further note that, when $\abs{t}=2T_R$, \eqref{eq:eps} implies that $\abs{\psi_n(t)}\in\{\abs{x_2}\geqslant 2R-\eps/2\}\subset\{\abs{x_2}>R\}$ provided we choose $\eps<2R$. It follows that
\begin{equation*}
\traj\cap Q_{k,R}\subset\bigcup_{a/\sep<n<b/\sep}\{\psi_n(t):\abs{t}\leqslant2T_R\}.
\end{equation*}
In summary
\begin{equation*}
\traj\cap Q_{k,R}\subset\bigcup_{a/\sep<n<b/\sep}\{(\sep n+\rho_0,0)+t\vec{d}:\abs{t}\leqslant2T_R\}+B_{\eps/2}(0).
\end{equation*}
To finish, note that since $\lim_k\frac{\sep n+\sep_k}{\sep(n+k)+\rho_0}=1$ uniformly for any $n\in(a/\sep,b/\sep)$ then, for $k$ large enough,
\begin{equation*}
\traj\cap Q_{k,R}\subset\bigcup_{a/\sep<n<b/\sep}\{(\sep(n+k)+\sep_k,0)+t\vec{d}:\abs{t}\leqslant2T_R\}+B_\eps(0),
\end{equation*}
which is \eqref{eq:curve-line2}. Then \eqref{lines-curves} follows.
\end{proof}

\begin{proposition}\label{coro_xx}
Let $\traj$ be a spiraling trajectory with asymptotic velocity $\vec{d}\in\mathbb{S}^1$ and asymptotic separation $\tau>0$. Let $\Omega$ be a convex centered symmetric body such that 
\begin{equation*}
\{t\vec{d}_\perp:t\in[-1/2\tau,1/2\tau]\}\subset\Omega^\circ.
\end{equation*}
Then $\traj$ is not a sampling trajectory for $PW^2(\Omega)$.
\end{proposition}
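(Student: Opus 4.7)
The plan is to argue by contradiction: assume $\traj$ is sampling for $PW^2(\Omega)$, and combine Theorem~D, Corollary~\ref{bodyshrink}, Lemma~\ref{lemma_wl} and Beurling's weak-limit characterization (Theorem~\ref{Beurling}) to reach a contradiction. First, Theorem~D produces a separated $\aset\subset\traj$ sampling $PW^2(\Omega)$, and Corollary~\ref{bodyshrink}(i) upgrades this to sampling of $PW^\infty((1-\eps)\Omega)$ for every $\eps\in(0,1)$; the specific $\eps$ will be pinned down by the construction below. Along the translates $\{x_n\}$ from Lemma~\ref{lemma_wl} with $\traj-x_n\weakconv L_{\vec d,\tau}$, Lemma~\ref{lemma_wc} lets me extract a subsequence along which $\aset - x_{n_k}\weakconv\aset'$ for some separated $\aset'$. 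Since $\aset\subset\traj$, a short closure argument based on Definition~\ref{weak} forces $\aset'\subset L_{\vec d,\tau}$.

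The heart of the proof is exhibiting a nonzero $f\in PW^\infty((1-\eps)\Omega)$ that vanishes on every line of $L_{\vec d,\tau}$. I work in the orthonormal coordinates $(s,v)$ associated with $(\vec d,\vec d_\perp)$, i.e.\ $\xi = s\vec d + v\vec d_\perp$. The hypothesis $\{t\vec d_\perp:|t|\leqslant 1/(2\tau)\}\subset\Omega^\circ$, together with compactness, provides an $r>0$ such that the rectangle
\begin{equation*}
R:=\bigl\{s\vec d+v\vec d_\perp: |s|\leqslant r,\ |v|\leqslant 1/(2\tau)+r\bigr\}
\end{equation*}
is contained in $\Omega^\circ$, and for $\eps$ sufficiently small, also in $(1-\eps)\Omega$. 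I pick any nonzero $\phi\in C_c^\infty(\R)$ with $\supp(\phi)\subset[-1/(2\tau)-r,\,-1/(2\tau)+r]$ and set
\begin{equation*}
g(s\vec d+v\vec d_\perp):=\chi_{(-r,r)}(s)\bigl[\phi(v)-\phi(v-1/\tau)\bigr].
\end{equation*}
Then $g\in L^1\cap L^2$ is supported in $R\subset(1-\eps)\Omega$, so its inverse Fourier transform $f$ is bounded and continuous with $\widehat f=g$, whence $f\in PW^\infty((1-\eps)\Omega)\setminus\{0\}$.

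The vanishing of $f$ on $L_{\vec d,\tau}$ follows from a direct computation: for $x=t\vec d+\tau k\vec d_\perp$ the product structure of $g$ gives
\begin{equation*}
f(x)=\int_\R\chi_{(-r,r)}(s)\,e^{2\pi i ts}\d s\cdot\int_\R\bigl[\phi(v)-\phi(v-1/\tau)\bigr]\,e^{2\pi i\tau k v}\d v,
\end{equation*}
and the change of variable $w=v-1/\tau$ in the second integral produces the factor $e^{2\pi i k}=1$, so the bracket integrates to $0$. Hence $f|_{\aset'}=0$ with $f\not\equiv 0$, contradicting Theorem~\ref{Beurling}, which requires every $\aset'\in W(\aset)$ to be a set of uniqueness for $PW^\infty((1-\eps)\Omega)$.

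The main obstacle is the construction of $g$ inside the slightly shrunken spectrum: the hypothesis affords exactly a cross-section of width $1/\tau$ in the $\vec d_\perp$ direction of $\Omega^\circ$, and this is precisely the translation distance needed so that the discrete difference $\phi(v)-\phi(v-1/\tau)$ has Fourier samples on the dual lattice $\tau\Z$ that collapse to zero. The remaining bookkeeping—bridging continuous and pointwise sampling, passing from $PW^2$ to $PW^\infty$, and transferring the weak limit from $\traj$ to $\aset$—is structural and relies on the tools already developed in the preceding sections.
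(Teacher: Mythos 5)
Your proof is correct and follows the same skeleton as the paper's argument: contradict by invoking Theorem~D to extract a separated sampling set $\aset\subset\traj$, pass to $PW^\infty((1-\eps)\Omega)$ via Corollary~\ref{bodyshrink}, use Lemma~\ref{lemma_wl} together with Lemma~\ref{lemma_wc} to obtain a weak limit $\aset'\subset L_{\vec d,\tau}$, and then violate Theorem~\ref{Beurling} by exhibiting a nonzero function in $PW^\infty((1-\eps)\Omega)$ vanishing on $L_{\vec d,\tau}$. The only place you depart from the paper is the choice of that function. The paper simply takes the pure aliasing mode $f(x)=\sin\bigl(\frac{\pi}{\tau}\scal{x,\vec d_\perp}\bigr)$, whose Fourier transform is a pair of Dirac masses at $\pm\frac{1}{2\tau}\vec d_\perp$; since $PW^\infty$ is defined as bounded Fourier transforms of \emph{distributions} supported in the spectrum, this is immediately admissible and requires no construction. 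You instead build an $L^1\cap L^2$ function $g$ on a small rectangle $R$ in the $(\vec d,\vec d_\perp)$ frame, of the form $\chi_{(-r,r)}(s)\bigl[\phi(v)-\phi(v-1/\tau)\bigr]$, and take $f=\check g$; the cancellation at $v\mapsto v-1/\tau$ makes the $\vec d_\perp$-factor vanish on the dual lattice $\tau\Z$, which is exactly the aliasing identity $e^{2\pi ik}=1$. Your construction is heavier but achieves slightly more (your $f$ is also in $L^2$), which is immaterial here since Theorem~\ref{Beurling} only asks for a $PW^\infty$ uniqueness violation. One small remark: you should note explicitly that $\phi-\phi(\cdot-1/\tau)\not\equiv 0$ for any nonzero $\phi\in C^\infty_c$, since a compactly supported function that is $1/\tau$-periodic must vanish; with that observed, the argument is complete.
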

\begin{proof}
Suppose on the contrary that $\traj$ is a sampling trajectory for $PW^2(\Omega)$. By Theorem D, there exists a separated set $\aset\subset\traj$ that is a sampling set for $PW^2(\Omega)$. By Lemma \ref{lemma_wl}, $L_{\vec{d},\tau}\in W(\traj)$. Therefore, there exists a sequence $\{x_k: k\geqslant1\}\subset\R^2$ such that $\traj+x_k\weakconv L_{\vec{d},\tau}$. By Lemma \ref{lemma_wc}, we may pass to a subsequence, and assume that $\aset+x_k\weakconv\aset'$, for some set $\aset'\subset\R^2$. Since $\aset\subset\traj$, it follows that $\aset'\subset L_{\vec{d},\tau}$.

Since $\{t\vec{d}_\perp:t\in[-1/2\tau,1/2\tau]\}\subset\Omega^\circ$, there exists $r>1/2\tau$ such that $\{t\vec{d}_\perp:t\in[-r,r]\}\subset\Omega$, and we can take $0<\eps<1$ such that
\begin{equation*}
\{t\vec{d}_\perp:t\in[-1/2\tau,1/2\tau]\}\subset\{t\vec{d}_\perp:t\in[-(1-\eps)r,(1-\eps)r]\}\subset(1-\eps)\Omega.
\end{equation*}
As $\aset$ is a sampling set for $PW^2(\Omega)$, by Corollary \ref{bodyshrink}, it is also a sampling set for $PW^\infty((1-\eps)\Omega)$. Therefore, by Theorem \ref{Beurling}, $\aset'$ is a uniqueness set for $PW^\infty((1-\eps)\Omega)$. However, the non-zero function
\begin{equation*}
f(x):=\sin\left(\frac{\pi}{\tau}\scal{x,\vec{d}_\perp}\right)
\end{equation*}
satisfies $f\equiv0$ on $L_{\vec{d},\tau}$ -- cf. \eqref{eq_equi_pl}--
and therefore on $\aset'$,
and $\supp(\widehat{f})\subset\{t\vec{d}_\perp:t\in[-1/2\tau,1/2\tau]\}\subset(1-\eps)\Omega$. This contradiction shows that $\traj$ cannot be a sampling trajectory for $PW^2(\Omega)$.
\end{proof}

\subsection{Application to concrete curves}
We now have all the elements to prove our sampling result, Theorem A, for the Archimedes spiral and the 
concentric circles. Let us recall the statement and then prove it.
\begin{thmconcrete}{\em
Let $\Omega\subset\R^2$ be a convex centered symmetric body.
\begin{enumerate}
\item If $\diam(\Omega)\sep<1$, then the Archimedes spiral $A^\sep$ and the collection of concentric circles $O^\sep$ are sampling trajectories for $PW^2(\Omega)$.

\item If $\diam(\Omega)\sep>1$, then neither the Archimedes spiral $A^\sep$ nor the collection of concentric circles $O^\sep$ are sampling trajectories for $PW^2(\Omega)$.
\end{enumerate}}
\end{thmconcrete}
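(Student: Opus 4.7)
The plan is to treat the two parts separately: part $(i)$ will follow from the density sufficient condition of Beurling (Theorem \ref{sufficientdense}) together with the calibration tools from Section \ref{sec_pre} (Lemma \ref{lemma_sep_gap}, Theorem \ref{shrink}, Theorem D), following Benedetto--Wu; part $(ii)$ is the genuinely new half and will be assembled from the machinery already built — namely Proposition \ref{prop_ex} (which identifies $A^\sep$ and $O^\sep$ as spiraling with asymptotic separation $\sep$ and \emph{any} $\vec d\in\mathbb{S}^1$ admissible as asymptotic velocity) and Proposition \ref{coro_xx} (which converts that into a non-sampling conclusion). Once those two inputs are in hand, all that is left is an elementary geometric fact about convex centered symmetric bodies.

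\textbf{Execution of $(i)$.} Assume $\diam(\Omega)\sep<1$. By \eqref{eq_calc_gaps}, $\gap(A^\sep)=\gap(O^\sep)=\sep/2$. Pick $\delta>0$ so that $\sep(\diam(\Omega)+2\delta)<1$, and set $R:=\diam(\Omega)+2\delta$; since $\Omega$ is centered and symmetric, $\Omega+\bar B_\delta(0)\subset \bar B_{R/2}(0)$. By Lemma \ref{lemma_sep_gap} extract a separated subset $\aset$ of the trajectory with $\gap(\aset)<1/(2R)$ (possible because $\sep/2<1/(2R)$). Theorem \ref{sufficientdense} then makes $\aset$ a sampling set for $PW^\infty(\bar B_{R/2}(0))\supset PW^\infty(\Omega+\bar B_\delta(0))$, and Theorem \ref{shrink}$(ii)$ upgrades this to a sampling set for $PW^2(\Omega)$. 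Theorem D finally translates the existence of such a discrete sampling subset into the trajectory itself being sampling for $PW^2(\Omega)$.

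\textbf{Execution of $(ii)$.} Assume $\diam(\Omega)\sep>1$, and let $\traj\in\{A^\sep,O^\sep\}$. By Proposition \ref{prop_ex}, $\traj$ is a spiraling trajectory with asymptotic separation $\tau=\sep$, and any $\vec d\in\mathbb{S}^1$ is a legitimate asymptotic velocity. To invoke Proposition \ref{coro_xx} I need to exhibit $\vec d\in\mathbb{S}^1$ such that
\[\{t\vec d_\perp:t\in[-1/(2\sep),1/(2\sep)]\}\subset\Omega^\circ.\]
Here is the geometric step: since $\diam(\Omega)>1/\sep$, there exist $x,y\in\Omega$ with $\abs{x-y}>1/\sep$; by symmetry of $\Omega$, $-y\in\Omega$, and by convexity the midpoint $w:=(x-y)/2\in\Omega$ satisfies $\abs{w}>1/(2\sep)$; by convexity and symmetry again, the full segment $[-w,w]$ lies in $\Omega$. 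Using \eqref{body} with $\eps>0$ small enough that $(1-\eps)\abs{w}>1/(2\sep)$, the segment $[-(1-\eps)w,(1-\eps)w]$ lies in $(1-\eps)\Omega\subset\Omega^\circ$. Taking $\vec d_\perp:=w/\abs{w}$ (and $\vec d$ its perpendicular) delivers the inclusion required by Proposition \ref{coro_xx}, which then yields that $\traj$ is not a sampling trajectory for $PW^2(\Omega)$.

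\textbf{Expected main obstacle.} There is essentially no analytic obstacle left at this stage: the heavy lifting sits in Lemma \ref{lemma_wl} (weak convergence of translates of a spiraling curve to the grid $L_{\vec d,\tau}$) and in Proposition \ref{coro_xx} (aliasing by the sine $\sin(\pi\scal{\cdot,\vec d_\perp}/\tau)$). The one point where I would be careful is making sure the freedom in the choice of the asymptotic direction $\vec d$ is actually available for $A^\sep$ and $O^\sep$; this is precisely what Proposition \ref{prop_ex} is set up to provide, and without it one would need an additional rotation argument (cf. Remark \ref{rem_rot}). Given that, the proof reduces to the short assembly above.
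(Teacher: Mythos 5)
Your proof is correct and follows essentially the same route as the paper: part $(i)$ via Lemma \ref{lemma_sep_gap}, Theorem \ref{sufficientdense}, Theorem \ref{shrink} and Theorem D, and part $(ii)$ by locating a diametral direction $\vec d_\perp$ so that the segment $\{t\vec d_\perp: |t|\leqslant 1/(2\sep)\}$ sits in $\Omega^\circ$, then invoking Proposition \ref{prop_ex} and Proposition \ref{coro_xx}. The only cosmetic difference is in the geometric step of $(ii)$: the paper picks a point of $\Omega$ of maximal norm and shows that norm equals $\diam(\Omega)/2$, while you take a near-diametral pair and its (symmetrized) midpoint — both deliver the same segment inclusion.
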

\begin{proof}
Let $\traj$ be either $A^\sep$ or $O^\sep$. Note first that
\begin{equation}
\label{eq_incl}\Omega\subset\bar{B}_{\diam(\Omega)/2}(0).
\end{equation}
Indeed, if $x\in\Omega$ then by symmetry $-x\in\Omega$ and thus, $2\norm{x}=\norm{x-(-x)}\leqslant\diam(\Omega)$.

For $(i)$, assume that $\diam(\Omega)\sep<1$, and let $\eps>0$ and $\sep'>\sep$ be such that $\sep'(1+\eps)\diam(\Omega)<1$. We know from \eqref{eq_calc_gaps} that $\gap(\traj)=\sep/2$. By Lemma \ref{lemma_sep_gap}, there exists a separated set $\aset\subset\traj$ with $\gap(\aset)\leqslant\sep'/2$. Since
\begin{equation*}
\gap(\aset)\diam(\bar{B}_{(1+\eps)\diam({\Omega})/2}(0))
\leqslant \frac{\sep'}{2}(1+\eps)\diam({\Omega})<\frac{1}{2}.
\end{equation*}
Theorem \ref{sufficientdense} implies that $\aset$ is a sampling set of $PW^\infty(\bar{B}_{(1+\eps)\diam({\Omega})/2}(0))$. Finally, applying Theorem \ref{shrink}, we have that $\aset$ is a sampling set for $PW^2(\bar{B}_{\diam(\Omega)/2}(0))$ and thus for $PW^2(\Omega)$. We invoke Theorem D to conclude that $\traj$ is a sampling trajectory for $PW^2(\Omega)$.

For $(ii)$, we first note that exists $\vec{d}\in\mathbb{S}^1$ such that
\begin{equation*}
\{t\vec{d}_\perp:t\in[-\diam(\Omega)/2,\diam(\Omega)/2]\}
\end{equation*}
is contained in $\Omega$. Indeed, by compactness, we can select $x\in\Omega$ with maximal norm. Then $\Omega\subset\bar{B}_{\norm{x}}(0)$, and hence $\diam(\Omega)\leqslant2\norm{x}=\norm{x-(-x)}\leqslant\diam(\Omega)$. Thus,
$\norm{x}=\diam(\Omega)/2$. Letting $\vec{d}_\perp:=x/\norm{x}$, 
convexity reads \[\{t\vec{d}_\perp:t\in[-\diam(\Omega)/2,\diam(\Omega)/2]\}\subset\Omega.\] Now suppose that $\diam(\Omega)\sep>1$ and take $\eps\in(0,1)$ such that \[(1-\eps)^{-1} 1/(2\sep)=\diam(\Omega)/2.\] Therefore, 
\begin{equation*}
\{t\vec{d}_\perp:t\in[-1/(2\sep),1/(2\sep)]\}\subset(1-\eps)\Omega\subset\Omega^\circ
\end{equation*}
where we used \eqref{body}. By Proposition \ref{prop_ex}, $\traj$
is spiraling with asymptotic velocity $\vec{d}$ and asymptotic
separation $\tau=\sep$. We invoke Proposition \ref{coro_xx} and conclude that $\traj$ is not a sampling trajectory for $PW^2(\Omega)$.
\end{proof}

\section{Approximate aliasing}
\label{sec_al}

\subsection{Rates of convergence for weak limits}
We introduce the following class of curves.

\begin{definition}\label{class}
Let $\sep>0$, $\lambda>0$ and $\Omega\subset\R^2$ a convex centered symmetric body. We say that a regular trajectory $\traj$ belongs to the class $\Cc(\sep,\lambda,\Omega)$ if there exists a constant $C_\traj$ for which, given $\eps,R>0$ there exists $y\in\mathbb{R}^2$ such that
\begin{enumerate}
\item $(\traj-y)\cap(-R,R)^2\subset(\sep\mathbb{Z}\times\mathbb{R})+B_{\sep\eps}(0)$,

\item $\abs{y}\leqslant C_\traj\frac{R^2}{\eps}$,

\item $\lambda^{-1/p}\norm{f}_{L^p(\mu_\traj)}\leqslant\norm{f}_p$ for all $f\in PW^p(\Omega)$ with $1\leqslant p\leqslant\infty$.
\end{enumerate}
\end{definition}
We now prove that the curves $A^\sep$ and $O^\sep$ belong to this type of classes. For technical reasons we extend this result to any rotation $\Rr_{2\pi\theta_0}A^\sep$. As a first step, we show the following lemma, which quantifies the convergence in Lemma \ref{lemma_wl}.

\begin{lemma}\label{rate-spiral}
Let $\sep>\sqrt{2}/2$ and $\theta_0\in[0,1)$. Then there exists a constant $C$ independent from $\sep$ and $\theta_0$ such that given $R\geqslant1$ and $0<\eps<1/2$ the inclusion
\begin{equation*}
(\Rr_{2\pi\theta_0}A^\sep-(\sep(n+\theta_0),0))\cap(-R,R)^2\subset\sep\Z\times\R+B_\eps(0)
\end{equation*}
holds for all $n\geqslant C\eps^{-1}R^2$. The same inclusion holds for $O^\sep$, translating instead by $(\sep n,0)$ and eventually taking a larger constant.
\end{lemma}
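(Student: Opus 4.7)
\medskip

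\noindent\textbf{Proof proposal.} The plan is to make the abstract weak-limit argument of Lemma \ref{lemma_wl} quantitative by using the explicit parametrizations of $A^\sep$ and $O^\sep$. The key point is that both families are governed by curvature $\asymp 1/n$ and, after translating the $n$-th passage through the positive $x$-axis to the origin, the Taylor expansion of a nearby arm is of the form ``vertical line $+$ quadratic error in the parameter''. Choosing the window of the parameter so that the arm fills the height $(-R,R)$ automatically produces an error of order $R^2/n$, which forces $n\gtrsim R^2/\varepsilon$.

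First I would reduce to the case $\theta_0=0$ for the Archimedes spiral by rotating and reindexing. A direct computation (as in the proof of Proposition \ref{prop_ex} for $O^\sep$) shows that the rotated spiral $\Rr_{2\pi\theta_0}A^\sep$ admits the explicit parametrization
\[
\tilde\curve(\phi+\theta_0)=\sep(\phi+\theta_0)\bigl(\cos2\pi\phi,\sin2\pi\phi\bigr),\qquad\phi\geqslant-\theta_0,
\]
so it meets the positive $x$-axis exactly at the points $(\sep(m+\theta_0),0)$, $m\in\N$. After subtracting $(\sep(n+\theta_0),0)$, the $m$-th arm ($s:=\phi-m$ close to $0$) becomes
\[
h_m(s)=\bigl(\sep(s+m+\theta_0)\cos2\pi s-\sep(n+\theta_0),\;\sep(s+m+\theta_0)\sin2\pi s\bigr).
\]
Expanding $\cos2\pi s=1-2\pi^2 s^2+O(s^4)$ and $\sin2\pi s=2\pi s+O(s^3)$ gives
\[
h_m(s)=\bigl(\sep(m-n)+\sep s+E_1(s,m),\;2\pi\sep(m+\theta_0)s+E_2(s,m)\bigr),
\]
with $|E_1(s,m)|\lesssim\sep s^2(m+1)$ and $|E_2(s,m)|\lesssim\sep|s|^3(m+1)$.

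Next I would localize: if $h_m(s)\in(-R,R)^2$, then the second coordinate being bounded by $R$ forces $|s|\lesssim R/(\sep m)$ (for $m$ large, which is guaranteed by $m\geqslant n-R/\sep$ and $n\gtrsim R^2/\varepsilon$), and the first coordinate being bounded by $R$ forces $|m-n|\lesssim R/\sep+1$. Substituting these ranges into the error $E_1$ and into the linear term $\sep s$ (which has to be absorbed by choosing as reference line the vertical line $\{\sep(m-n)\}\times\R\in\sep\Z\times\R$) yields
\[
\sep|s|\lesssim R/m\asymp R/n,\qquad |E_1(s,m)|\lesssim\sep\cdot\frac{R^2}{\sep^2 m^2}\cdot m\asymp\frac{R^2}{\sep n}\leqslant\frac{2R^2}{n},
\]
using $\sep>\sqrt2/2$ for the last inequality. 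Thus the horizontal deviation of $h_m(s)$ from the lattice $\sep\Z$ is $\lesssim R^2/n$, which is $\leqslant\sep\eps\leqslant\eps$ as soon as $n\geqslant CR^2/\eps$ for a sufficiently large universal $C$. This proves the claim for $\Rr_{2\pi\theta_0}A^\sep$.

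For the concentric circles $O^\sep$, I would use the even simpler parametrization: after translating by $(\sep n,0)$, the circle of radius $\sep k$ becomes $\{(x,y):(x+\sep n)^2+y^2=\sep^2 k^2\}$. Solving for $x$ yields
\[
x=-\sep n\pm\sep k\sqrt{1-(y/\sep k)^2}=\sep(\pm k-n)\mp\frac{y^2}{2\sep k}+O\!\left(\frac{y^4}{\sep^3 k^3}\right),
\]
so on the window $|y|\leqslant R$ the translated circle lies within $R^2/(2\sep k)+O(R^4/(\sep^3 k^3))$ of the vertical line $\{\sep(\pm k-n)\}\times\R$, and only circles with $|k-n|\leqslant R/\sep+1$ enter the square. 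For $k\geqslant n-R/\sep$ and $n\geqslant CR^2/\eps$ this error is again $\leqslant\sep\eps$, completing the second case.

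The main obstacle I anticipate is not any single step but the bookkeeping: one must check that all implied constants in the Taylor remainders, as well as the $R/\sep+1$ range of relevant arms, combine into a \emph{single} constant $C$ that is independent of both $\sep$ (thanks to the assumption $\sep>\sqrt2/2$) and of $\theta_0$ (thanks to rotational invariance). Once one is careful to keep track of where powers of $\sep$ appear, the bound $n\geqslant C\eps^{-1}R^2$ drops out directly from the quadratic term in the Taylor expansion, which is the only genuinely critical estimate.
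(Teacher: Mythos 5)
Your proposal is correct and takes essentially the same approach as the paper: both parametrize the $m$-th arm of the rotated spiral in polar coordinates, use the constraint $|x_2|<R$ to get the angular bound $|s|\lesssim R/(\sep m)$, invoke a second-order Taylor/trigonometric estimate to bound the horizontal deviation of the arm from $\sep\Z$ by $O(R^2/m)$ with $m\gtrsim n$ (for which one needs, as the paper makes explicit via $x_1\leqslant\sep\theta\leqslant\sep(m+1/4+\theta_0)$, a lower bound on $m$ before substituting into the error terms), and then choose $n\geqslant CR^2/\eps$. Your Cartesian derivation for $O^\sep$ is a pleasant simplification; the paper merely states that the argument is similar.
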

\begin{proof}
Let $x=(\sep\theta\cos2\pi(\theta-\theta_0),\sep\theta\sin2\pi(\theta-\theta_0))\in \Rr_{2\pi\theta_0}A^\sep\cap(\R\times(-R,R))$ with $x_1>\sep(\theta_0+1/4)$. Then, there exists $k\in\N$ such that $k-1/4<\theta-\theta_0<k+1/4$. We will prove
\begin{equation}
\label{x1}\abs{x_1-\sep(k+\theta_0)}\leqslant\frac{33R^2}{k}.
\end{equation}
Assuming this for a moment, let us show how the lemma would then follow. Let $n\geqslant 2R$ and take $y\in(\Rr_{2\pi\theta_0}A^\sep-(\sep(n+\theta_0),0))\cap(-R,R)^2$. Write $y=x-(\sep(n+\theta_0),0))$ with $x\in\Rr_{2\pi\theta_0}A^\sep$. Then $x_1>\sep(n+\theta_0)-R$, and since $n\geqslant2R$, $R\geqslant1$ and $2\sep>1$, we have $x_1>\sep(\theta_0+1/4)$. Further, taking the same $k$ as before,
\begin{equation*}
\sep(n+\theta_0)-R<x_1=\sep\theta\cos2\pi(\theta-\theta_0)\leqslant\sep\theta\abs{\cos2\pi(\theta-\theta_0)}\leqslant\sep\theta\leqslant\sep(k+1/4+\theta_0).
\end{equation*}
This yields 
\begin{equation*}
k>n-1/4-R/\sep>n/2-R^2/\eps
\end{equation*}
since $n\geqslant1$ and $\sep>1/2>\eps$. Hence, taking $n\geqslant\frac{68R^2}{\eps}\geqslant2R$ we get $k>\frac{33R^2}{\eps}$ and then \eqref{x1} reads $\abs{y_1-\sep(n-k)}=\abs{x_1-\sep(k+\theta_0)}<\eps$. Therefore, $y\in\sep\Z\times\R+B_\eps(0)$, as claimed.

Let us now prove \eqref{x1}. Using the same notation as before for $x\in\Rr_{2\pi\theta_0}A^\sep\cap(\R\times(-R,R))$ with $x_1>\sep(\theta_0+1/4)$, we have
\begin{gather*}
k-1/4<\theta-\theta_0<k+1/4,
\\
-R<\sep\theta\sin2\pi(\theta-\theta_0)<R.
\end{gather*}
In particular, since $\arcsin(\theta)$ is a strictly increasing function and $\abs{\arcsin(\theta)}\leqslant2\abs{\theta}$ for $\theta\in[-1,1]$,
\begin{equation*}
\abs{\theta-(k+\theta_0)}<\frac{1}{2\pi}\cdot\frac{2R}{\sep\theta}<\frac{R}{\sep\pi(k-1/4+\theta_0)}<\frac{2R}{\sep\pi k}
\end{equation*}
since $\theta_0\geqslant0$, $k\geqslant1$. Next, using the triangle inequality and the fact that $\abs{\cos\theta-1}\leqslant\abs{\theta}^2/2$ we get
\begin{align*}
\abs{x_1-\sep(k+\theta_0)}&=\abs{\sep\theta\cos2\pi(\theta-\theta_0)-\sep(k+\theta_0)}
\\
&\leqslant\abs{\sep\theta\cos2\pi(\theta-\theta_0)-\sep(k+\theta_0)\cos2\pi(\theta-\theta_0)}
\\
&\qquad+\abs{\sep(k+\theta_0)\cos2\pi(\theta-\theta_0)-\sep(k+\theta_0)}
\\
&\leqslant\sep\abs{\theta-(k+\theta_0)}+\sep(k+\theta_0)\abs{\cos2\pi(\theta-k-\theta_0)-1}
\\
&\leqslant\sep\abs{\theta-(k+\theta_0)}+2\pi^2\sep(k+\theta_0)\abs{\theta-(k+\theta_0)}^2
\\
&\leqslant\frac{2R}{\pi k}+4\pi^2\sep k\frac{4R^2}{(\sep\pi k)^2}\leqslant\frac{R}{k}+\frac{16R^2}{\sep k}\leqslant\frac{33R^2}{k},
\end{align*}
since $2\sep,R,k\geqslant1>\theta_0$. The proof for $O^\sep$ is similar.
\end{proof}

\begin{proposition}\label{class-spiral}
Let $\sep>\sqrt{2}/2$, $\theta_0\in[0,1)$, and $R_0 >0$.
Then, there exists a constant $C>0$ independent of $\sep$ and $\theta_0$ such that the curves 
$\Rr_{2\pi\theta_0}A^\sep$ and $O^\sep$ belong to the class $\Cc(\sep,C(R_0+1),\bar{B}_{R_0}(0))$.
Moreover, the constant $C_\curve=C_{\Rr_{2\pi\theta_0}A^\sep}, C_{O^\sep}$ in Definition \ref{class}
is also independent of $\sep$, $\theta_0$.
\end{proposition}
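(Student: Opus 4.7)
The plan is to verify each of the three conditions of Definition \ref{class} separately, drawing on the tools already assembled. First, the regularity of both trajectories follows from Proposition \ref{prop_ex} together with the rotation-invariance of the spiraling property noted in Remark \ref{rem_rot}. The Bessel-type condition (3) is a direct consequence of Proposition \ref{JM} applied with $R = R_0$: since $\sep^{-1}<\sqrt{2}$ (from $\sep>\sqrt{2}/2$), the prefactor $\sep^{-1/p}+R_0^{1/p}$ can be absorbed into a universal constant multiple of $(R_0+1)^{1/p}$, yielding $\|f\|_{L^p(\mu_\traj)}\leqslant\lambda^{1/p}\|f\|_p$ with $\lambda=C(R_0+1)$ and $C$ independent of $\sep$, $\theta_0$, and $p$; the case $p=\infty$ is trivial since $\|f\|_{L^\infty(\mu_\traj)}\leqslant\|f\|_\infty$.

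For conditions (1) and (2), the strategy is to invoke Lemma \ref{rate-spiral} after rescaling its tolerance parameter to the $\sep\eps$ demanded by Definition \ref{class}. In the principal regime $\sep\eps<1/2$ and $R\geqslant 1$, applying Lemma \ref{rate-spiral} with its $\eps$-parameter replaced by $\sep\eps$ gives, for every $n\geqslant C(\sep\eps)^{-1}R^2$,
\[
(\Rr_{2\pi\theta_0}A^\sep-(\sep(n+\theta_0),0))\cap(-R,R)^2\subset\sep\Z\times\R+B_{\sep\eps}(0),
\]
which is exactly condition (1) with $y:=(\sep(n+\theta_0),0)$. Choosing $n=\lceil C(\sep\eps)^{-1}R^2\rceil$ gives $|y|\leqslant CR^2/\eps+2\sep$; using $\sep\eps<1/2$ (so $2\sep<1/\eps$) together with $R\geqslant 1$ (so $1/\eps\leqslant R^2/\eps$), the extra term is absorbed to yield $|y|\leqslant C_\curve R^2/\eps$ with $C_\curve$ independent of $\sep$ and $\theta_0$. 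For $O^\sep$ the argument is identical, with $y=(\sep n,0)$ and the analogous statement at the end of Lemma \ref{rate-spiral}.

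The main obstacle is the treatment of the degenerate parameter ranges, which require bookkeeping but no new ideas. When $R\leqslant\sep\eps$, the box $(-R,R)^2$ is contained in the central strip $\{|x_1|<\sep\eps\}$ of $\sep\Z\times\R+B_{\sep\eps}(0)$, so (1) holds trivially with $y=0$. When $\eps\geqslant 1/2$, the strips $\sep\Z\times\R+B_{\sep\eps}(0)$ cover $\R^2$, and again $y=0$ works. The intermediate regime $\sep\eps<R<1$ with $\eps<1/2$ is the most delicate: a reexamination of the proof of Lemma \ref{rate-spiral} yields a variant of its key estimate for $R<1$ (in which the $R^2/k$ term is replaced by $C\sep R/k$), and combined with an appropriate choice of $y$ (either on the positive $x_1$-axis far from the origin, or in a gap between consecutive turns of the trajectory when $R<\sep/2$), the bound $|y|\leqslant C_\curve R^2/\eps$ can be maintained with a universal constant.
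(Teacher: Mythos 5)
Your overall plan coincides with the paper's proof: condition (iii) is read off from Proposition \ref{JM}, and conditions (i)--(ii) are obtained by applying Lemma \ref{rate-spiral} with its tolerance parameter set to $\eps':=\sep\eps$, taking $y=(\sep(n+\theta_0),0)$ with $n\approx C(\eps')^{-1}R^2$. The paper simply declares ``without loss of generality $\eps'\in(0,1/2)$ and $R\geqslant 1$'' and stops, whereas you try to justify that reduction explicitly. Your treatment of the two easy degenerate regimes ($R\leqslant\sep\eps$ and $\eps\geqslant1/2$, both with $y=0$) is correct and goes beyond what the paper writes.

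Where your argument is not actually complete is the remaining intermediate regime. For $\sep\eps<R<1$ with $\eps<1/2$, you assert that ``a reexamination of the proof of Lemma \ref{rate-spiral} yields a variant of its key estimate'' and that a suitable $y$ (far along the positive axis, or in a gap between turns) gives $|y|\leqslant C_\curve R^2/\eps$ with a universal constant. This is stated, not proved, and it is not routine bookkeeping: the constraint $|y|\lesssim R^2/\eps$ can be genuinely tight here (when $R$ is just above $\sep\eps$, $R^2/\eps$ is of order $\sep^2\eps$, which forces $y$ to lie only a few spiral turns out, where curvature effects are not negligible). Your case split also quietly omits the regime $R\geqslant 1$, $\eps<1/2$, but $\sep\eps\geqslant 1/2$ (possible when $\sep>1$), in which Lemma \ref{rate-spiral} cannot be invoked at tolerance $\sep\eps$ and the bound $|y|\lesssim R^2/\eps$ with a $\sep$-independent constant again needs justification. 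To be fair, the paper's one-line ``WLOG'' suppresses exactly these same issues, so your write-up is not less rigorous than the paper's; but the sketch of the delicate intermediate case should not be presented as if it were a finished proof. If you want to actually close the gap, the cleaner route is to note that the class $\Cc(\sep,\lambda,\Omega)$ is only ever invoked downstream (in Lemma \ref{fhat}) with $R\geqslant 1$ large and $\sep\eps$ small, so it suffices to establish (i)--(ii) in that range; alternatively, adjust Definition \ref{class} to restrict the parameter range.

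One further small point shared with the paper: for condition (iii), the estimate $\sep^{-1/p}+R_0^{1/p}\lesssim(R_0+1)^{1/p}$ cannot hold with a $p$-independent constant factored into $\lambda^{1/p}$ for all $1\leqslant p\leqslant\infty$ (as $p\to\infty$ the left side tends to $2$ while $\lambda^{1/p}\to 1$). This is harmless in practice because only $p=1$ is used (in Lemma \ref{fhat}), and you correctly observe $p=\infty$ is trivial, but a uniform-in-$p$ statement as worded requires more care.
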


\begin{remark}
The proof below also works for $0<\sep\leqslant\sqrt{2}/2$ but then the constants $C_{\Rr_{2\pi\theta_0}A^\sep}$ and $C_{O^\sep}$ depend on $\sep$. The reader may check that they satisfy a bound of the form $C\sep^{-1}$.
\end{remark}

\begin{proof}[Proof of Proposition \ref{class-spiral}]
We treat only $\Rr_{2\pi\theta_0}A^\sep$, the case of $O^\sep$ being similar. 
Condition 
$(iii)$ of Definition \ref{class} follows from Proposition \ref{JM}: since $\sep>\sqrt{2}/2$, for all 
$f\in PW(\bar{B}_{R_0}(0))$
\begin{equation*}
\norm{f}_{L^p(\mu_\traj)}\lesssim(\sep^{-1/p}+R_{0}^{1/p})\norm{f}_p\lesssim(R_0+1)^{1/p}\norm{f}_p.
\end{equation*}
Conditions $(i)$ and $(ii)$ follow from the Lemma \ref{rate-spiral}. Indeed, 
let $\eps,R>0$ be given and set $\eps':=\sep\eps$. Without loss of generality, we can 
assume that $\eps'\in(0,1/2)$ and $R\geqslant1$. Taking $y=(\sep(n+\theta_0),0)$ with $n=\lceil C(\sep\eps')^{-1}R^2\rceil$, Lemma \ref{rate-spiral} gives
\begin{gather*}
(\Rr_{2\pi\theta_0}A^\sep-y)\cap(-R,R)^2\subset\sep\Z\times\R+B_{\sep\eps}(0),
\\
\abs{y}\leqslant2C\eps^{-1}R^2,
\end{gather*}
as desired.
\end{proof}

\subsection{Quantitative aliasing}
Having quantified the convergence in Lemma \ref{lemma_wl}, we turn into the quantification of aliasing. While a union of lines with sub-Nyquist density leads to aliasing, we show that spirals suffer from approximate aliasing.

Let us recall the definition of {\it variation} of a function $f\in L^1(\Omega)$:
\begin{equation}
\label{eq_def_var}
\var_\Omega(f):=\sup\left\{
\big|{
\int_\Omega f\text{div}h}\big|:h\in C_c^1(\Omega, 
\mathbb{C}^2)\text{, }\norm{h}_\infty\leqslant1\right\}.
\end{equation}
When $f$ belongs to the Sobolev space $W^{1,1}(\Omega)$ integration by parts shows that $\var_\Omega(f)=\norm{\nabla f}_{L^1(\Omega)}$. 

\begin{lemma}\label{fhat}
Let $\sep>\sqrt{2}/2$, $\lambda>0$ and consider $\traj\in\Cc(\sep,\sep\lambda,\bar{B}_2(0))$. Let
$Q$ be the square of vertices $(0,0)$, $(\sqrt{2}/4,\sqrt{2}/4)$, $(\sqrt{2}/2,0)$ and $(\sqrt{2}/4,-\sqrt{2}/4)$. Then given $\zeta>0$, there exists $g\in PW^2(Q\cup-Q)$ such that
\begin{enumerate}
\item $\norm{g}_2=1$,

\item $\sep^{-1/2}\norm{g}_{L^2(\mu_\traj)}\leqslant\zeta$,

\item $\var(\widehat{g})\leqslant C\left(\max((\sep-\sqrt{2}/2)^{-1},\sep)\frac{\lambda}{\zeta^2}\ln^4\left(C\frac{\lambda}{\zeta^2}\right)+1\right)$,

\item $\norm{\widehat{g}}_\infty\leqslant C\max((\sep-\sqrt{2}/2)^{-1},\sep)$,
\end{enumerate}
where $C>0$ is a universal constant.
\end{lemma}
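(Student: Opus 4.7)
The plan is to mimic the alias function $\sin(\pi x_1/\sep)$, which vanishes on the vertical lines $\sep\Z\times\R$ that approximate $\traj$ locally (by Definition \ref{class}), and smooth it with a bump whose Fourier support lies inside $Q\cup-Q$. Fix a Schwartz bump $\phi\in C_c^\infty(B_1(0))$ with $\|\phi\|_2=1$, and set
\[
\delta:=c\,\min\!\bigl(1/(2\sep),\,\sqrt{2}/2-1/(2\sep)\bigr)/\sqrt 2
\]
with $c\in(0,1)$ small enough that $\delta\asymp 1/\max((\sep-\sqrt 2/2)^{-1},\sep)$ and the disks $B_\delta(\pm(1/(2\sep),0))$ are disjoint and contained in $Q\cup-Q$. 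Define $h$ by $\hat h(\xi):=\delta^{-1}\phi(\xi/\delta)$, so that $\|h\|_2=1$, $h=\delta\check\phi(\delta\,\cdot)$, $\|\hat h\|_\infty\lesssim\delta^{-1}$, $\|\hat h\|_1\lesssim\delta$, $\|\nabla\hat h\|_1\lesssim 1$. Choose $\eps\sim\zeta/\sqrt\lambda$, $R\sim\delta^{-1}(\lambda/\zeta^2)^{1/4}$, apply Definition \ref{class}(i)-(ii) to get $y\in\R^2$ with $(\traj-y)\cap(-R,R)^2\subset\sep\Z\times\R+B_{\sep\eps}(0)$ and $|y|\leqslant C_\traj R^2/\eps$, and set
\[
g(x):=\sqrt 2\,\sin\!\bigl(\pi(x_1-y_1)/\sep\bigr)\,h(x-y).
\]

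Items (i), (iii), (iv) then follow from direct Fourier computation. Indeed,
\[
\hat g(\xi)=\frac{\sqrt 2}{2i}\,e^{-2\pi i y\cdot\xi}\bigl(\hat h(\xi-(1/(2\sep),0))-\hat h(\xi+(1/(2\sep),0))\bigr),
\]
and the two disjoint shifted copies of $\hat h$ sit inside $Q$ and $-Q$ respectively, so $g\in PW^2(Q\cup-Q)$, $\|g\|_2^2=\|\hat h\|_2^2=1$, and $\|\hat g\|_\infty\leqslant\sqrt 2\,\|\hat h\|_\infty\lesssim\delta^{-1}$. For the variation, $\nabla\hat g=-2\pi i y\,\hat g+\sqrt 2\,e^{-2\pi iy\cdot\xi}\nabla(\hat s*\hat h)$ gives
\[
\var(\hat g)\leqslant\int|\nabla\hat g|\,d\xi\lesssim|y|\,\|\hat h\|_1+\|\nabla\hat h\|_1\lesssim|y|\delta+1\lesssim\delta\cdot R^2/\eps+1\lesssim\delta^{-1}\lambda/\zeta^2+1,
\]
which is dominated by the stated bound (the $\ln^4$ factor being slack that can be absorbed).

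For item (ii), using $\int f(x-y)\,d\mu_\traj(x)=\int f(u)\,d\mu_{\traj-y}(u)$, split $\|g\|_{L^2(\mu_\traj)}^2=2(I_1+I_2)$ with $I_j$ the integrals of $|s(u)h(u)|^2$, where $s(u):=\sin(\pi u_1/\sep)$, over $(\traj-y)\cap(-R,R)^2$ and its complement. On the near region Definition \ref{class}(i) forces $|s(u)|\leqslant\pi\eps$. Since $|h|^2$ has Fourier support in $B_{2\delta}(0)\subset\bar B_2(0)$, Definition \ref{class}(iii) with $p=1$ (applied after translation) yields $\int|h|^2\,d\mu_{\traj-y}\leqslant\sep\lambda\|h\|_2^2=\sep\lambda$, hence $I_1\lesssim\eps^2\sep\lambda\lesssim\sep\zeta^2$. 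For $I_2$, the key trick is that multiplying $|h|^2$ by $(|u|/R)^{2N}$ corresponds in frequency to a constant-coefficient differential operator; consequently $(|u|/R)^{2N}|h|^2$ still lies in $PW^1(\bar B_2(0))$, so Definition \ref{class}(iii) gives
\[
I_2\leqslant\int(|u|/R)^{2N}|h|^2\,d\mu_{\traj-y}\leqslant\sep\lambda\,(\delta R)^{-2N}\!\int|v|^{2N}|\check\phi(v)|^2\,dv.
\]
Choosing $N=2$ and the stated $R$ makes $I_2\lesssim\sep\zeta^2$, hence $\sep^{-1/2}\|g\|_{L^2(\mu_\traj)}\lesssim\zeta$, which proves (ii) after absorbing constants into $\eps, R$.

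The main obstacle is controlling the tail of $|h|^2$ along $\traj-y$: one has no pointwise bound on the density of $\mu_\traj$ at large distances from $y$, so Schwartz decay of $h$ cannot be transferred directly into an $L^2(\mu_\traj)$ tail estimate. The polynomial-weight trick, which preserves the bandlimit while extracting the algebraic decay $(\delta R)^{-2N}$ via the $L^1$-Bessel bound (iii), is what resolves this; all other constraints (disjointness of shifted Fourier supports, $L^2$-normalization, modulation contribution to $\var$) then fall into place under the balance of $\eps$, $R$, $\delta$ chosen above.
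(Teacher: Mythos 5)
Your argument is correct and does deliver all four conclusions, but it diverges from the paper's proof in the way item (ii) is established, and the divergence is substantive. The paper takes a fixed $\phi\in PW^2(Q_0)$ with sub\-exponential decay $|\phi(x)|\lesssim e^{-|x|^{1/2}}$, bounds $\sup_{\traj}|g|$ pointwise by splitting into the near region (where the aliasing factor $\sin$ is $O(\eps)$) and the far region (where the decay of $\phi$ kicks in), and then interpolates $\|g\|_{L^2(\mu_\traj)}^2\leqslant\|g\|_{L^1(\mu_\traj)}\sup_{\traj}|g|$, applying the $L^1$ Bessel bound once. Since the decay is only subexponential, $R$ must be taken $\sim\beta^{-1}\ln^2(\lambda/\zeta^2)$, and this is exactly where the $\ln^4$ in item (iii) originates. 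You instead take a Schwartz $h$ (with $\hat h$ a dilated $C_c^\infty$ bump), split $\|g\|_{L^2(\mu_\traj)}^2$ itself into near and far pieces, and invoke the $L^1$ Bessel bound \emph{twice}: once on $|h|^2$, and once on the polynomially weighted $(|u|/R)^{2N}|h|^2$, both of which remain band\-limited to $\bar B_2(0)$ because polynomial multiplication corresponds to a constant-coefficient differential operator in frequency. This replaces pointwise decay on the trajectory (which, as you correctly note, cannot be transferred to an $L^2(\mu_\traj)$ tail bound without knowing the local density of $\mu_\traj$ at infinity) by an averaged argument that only uses the Bessel inequality. Two side benefits of your route: (a) you may take $\eps\sim\zeta/\sqrt\lambda$ rather than $\zeta^2/\lambda$ because the alias factor is squared inside the $L^2$ integral, and (b) the choice $R\sim\delta^{-1}(\lambda/\zeta^2)^{1/4}$ makes $|y|\delta\lesssim\delta^{-1}\lambda/\zeta^2$ with no logarithm, so your variation bound is in fact \emph{sharper} than the stated one (the $\ln^4$ factor is pure slack in your version). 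All the geometric checks --- that $B_\delta(\pm(1/(2\sep),0))$ are disjoint and lie in $\pm Q$ for $\delta\asymp\min(1/(2\sep),\sqrt2/2-1/(2\sep))$, and that $\delta^{-1}\asymp\max((\sep-\sqrt2/2)^{-1},\sep)$ --- are the same as in the paper and you have them right. One small remark: the constant in items (iii)--(iv) secretly depends on $C_\traj$ from Definition \ref{class} in both your proof and the paper's; this is resolved in Proposition \ref{class-spiral}, which shows $C_\traj$ is uniform for the relevant curves, so it is not a defect of your argument.
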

\begin{proof}
Let $Q_0$ be the square of vertices $(-1,0)$, $(0,1)$, $(1,0)$ and $(0,-1)$. Take $\phi\in PW^2(Q_0)$ such that $\norm{\phi}_2=\sqrt{2}/2$ and
\begin{equation}
\label{phiexp}\abs{\phi(x)}\leqslant C_1e^{-\abs{x}^{1/2}}
\end{equation}
for some constant $C_1>0$. Note that, integrating in polar coordinates
\begin{equation*}
\norm{\phi}_1\leqslant2\pi C_1\int_0^\infty e^{-r^{1/2}}r\d r=4\pi\int_0^\infty e^{-s}s^3\d s=24\pi C_1.
\end{equation*}
Also, as $\phi\in PW^2(Q_0)$ and the area of $Q_0$ is $2$ and $\norm{\widehat{\phi}}_2=\norm{\phi}_2=\sqrt{2}/2$, then Cauchy-Schwartz inequality gives $\norm{\widehat{\phi}}_1\leqslant\abs{Q_0}^{1/2}\norm{\widehat{\phi}}_2=1$.

Define $g_0(x):=\beta\sin\left(\frac{\pi}{\sep}x_1\right)\phi(\beta x)$ with $\beta=\min(1/(2\sep),\sqrt{2}/2-1/(2\sep))$. Note that, as $\sep>\sqrt{2}/2$, $\beta>0$. For $\eps,R>0$ to be fixed later take $y=y(\eps,R)$ as in Definition \ref{class} and set $g(x):=g_0(x-y)$. Then
\begin{multline}
\label{plancha}\widehat{g}(\xi)=\frac{e^{-2\pi iy\xi}}{2\beta i}\left[\widehat{\phi}(\beta^{-1}(\xi_1-1/(2\sep)),\beta^{-1}\xi_2)-\widehat{\phi}(\beta^{-1}(\xi_1+1/(2\sep)),\beta^{-1}\xi_2)\right]
\\
=:\phi_1(\xi)-\phi_2(\xi).
\end{multline}
As $\phi\in PW^2(Q_0)$, the support of $\phi_1$ is included in $Q_\sep:=\beta Q_0+(1/(2\sep),0)$ and the 
support of $\phi_2$ in $-Q_\sep$. We claim that $Q_\sep\subset Q$. 
To show this, we argue by cases 
on $1/(2\sep)$. If $1/(2\sep)\leqslant\sqrt{2}/4$ this means that $\beta=1/(2\sep)$ and hence the points 
$(1/(2\sep),\beta)$ and $(1/(2\sep),-\beta)$ belong to the square $Q$. Also since $1/(2\sep)-\beta=0$ and 
$1/(2\sep)+\beta\leqslant\sqrt{2}/2$, both $(1/(2\sep)-\beta,0)$ and $(1/(2\sep)+\beta,0)$ belong to $Q$. By 
convexity this yields $Q_\sep\subset Q$. If $1/(2\sep)\geqslant\sqrt{2}/4$ this means that 
$\beta=\sqrt{2}/2-1/(2\sep)$ and hence the points $(1/(2\sep),\beta)$ and $(1/(2\sep),-\beta)$ belong to 
$Q$.
Also in this case $1/(2\sep)-\beta\geqslant0$ and $1/(2\sep)+\beta=\sqrt{2}/2$, so that, again, 
$(1/(2\sep)-\beta,0)$ and $(1/(2\sep)+\beta,0)$ belong to $Q$. Thus $Q_\sep\subset Q$ also in this case. 
Thus, 
in any case, $\supp(\phi_1)\subset Q$ and $\supp(\phi_2)\subset -Q$, where $Q$ is the cube 
defined in the hypothesis. Altogether, $g\in PW^2(Q\cup-Q)$.
\begin{figure}[ht]
\begin{tikzpicture}[scale=0.5, every node/.style={scale=0.5}]
\draw[->](0,0)--(10,0)
node[pos=0.3,below]{$1/(2\sep)$};
\draw[|-|,blue](3,0)--(6,0);
\node at (4.5,0.2) {$\beta$};
\draw[->](0,-6)--(0,6);
\draw[fill=gray,fill opacity=0.2](3,3)--(6,0)--(3,-3)--(0,0);
\draw[fill=gray,fill opacity=0.1](0,0)--(4,4)--(8,0)--(4,-4)--(0,0);
\draw(4.4,2)node{$Q_\sep$};
\node at (4.4,4) {$Q$};
\end{tikzpicture}
\hspace*{5em}
\begin{tikzpicture}[scale=0.5, every node/.style={scale=0.5}]
\draw[->](0,0)--(10,0)
node[pos=0.5,below]{$1/(2\sep)$};
\draw[|-|,blue](2,0)--(5,0);
\node at (3.5,0.2) {$\beta$};
\draw[->](0,-6)--(0,6);
\draw[fill=gray,fill opacity=0.2](8,0)--(5,3)--(2,0)--(5,-3);
\draw[fill=gray,fill opacity=0.1](0,0)--(4,4)--(8,0)--(4,-4)--(0,0);
\draw(3.4,2)node{$Q_\sep$};
\node at (4.4,4) {$Q$};
\end{tikzpicture}
\caption{Sketch of both cases: $1/(2\sep)\leqslant\sqrt{2}/4$ (left), $1/(2\sep)\geqslant\sqrt{2}/4$ (right).}
\end{figure}

Since $Q$ and $-Q$ only intersect at $0$, then $\phi_1$ and $\phi_2$ have disjoint support up to a set of measure zero. Hence, using Plancharel we obtain
\begin{equation*}
\norm{g}_2^2=\norm{\widehat{g}}_2^2=\norm{\phi_1}_2^2+\norm{\phi_2}_2^2=2\norm{\phi}_2^2=1,
\end{equation*}
which proves $(i)$.

We now show that $\sep^{-1}\norm{g}_{L^2(\mu_\traj)}$ is small enough for an appropriate choice of $\eps$ and $R$. First note that $\sup_{x\in\sep\mathbb{Z}\times\mathbb{R}+\bar{B}_{\sep\eps}(0)}\abs{g_0(x)}\leqslant\beta\eps\norm{\phi}_{L^\infty(\R^2)}\leqslant\beta\eps C_1$. Then, combining this with condition $(i)$ of Definition \ref{class} and \eqref{phiexp} we have
\begin{align*}
\sup_{x\in\traj}\abs{g(x)}=\sup_{x\in\traj-y}\abs{g_0(x)}&\leqslant\sup_{x\in(\traj-y)\cap(-R,R)^2}\abs{g_0(x)}+\sup_{x\notin B_R(0)}\abs{g_0(x)}
\\
&\leqslant\sup_{x\in\sep\mathbb{Z}\times\mathbb{R}+\bar{B}_{\sep\eps}(0)}\abs{g_0(x)}+\beta C_1e^{-\beta^{1/2}R^{1/2}}
\\
&\leqslant\beta C_1(\eps+e^{-\beta^{1/2}R^{1/2}}).
\end{align*}
Since $Q\cup-Q\subset\bar{B}_2(0)$, we can use the condition $(iii)$ of Definition \ref{class} together with interpolation to get
\begin{align*}
\sep^{-1}\norm{g}^2_{L^2(\mu_\traj)}&\leqslant\sep^{-1}\norm{g}_{L^1(\mu)}\sup_{x\in\traj}\abs{g(x)}
\\
&\leqslant\lambda\norm{g}_1\sup_{x\in\traj}\abs{g(x)}=\lambda\norm{g_0}_1\sup_{x\in\traj}\abs{g(x)}
\\
&\leqslant\frac{\lambda\norm{\phi}_1}{\beta}\sup_{x\in\traj}\abs{g(x)}\leqslant24\pi\lambda
C_1^2(\eps+e^{-\beta^{1/2}R^{1/2}}).
\end{align*}

And finally taking
\begin{equation*}
\eps=\frac{\zeta^2}{48\pi\lambda C_1^2},\qquad R=\beta^{-1}\ln^2(48\pi\lambda C_1^2/\zeta^2),
\end{equation*}
we conclude that $\sep^{-1/2}\norm{g}_{L^2(\mu_\traj)}\leqslant\zeta$, which is point $(ii)$.

Note that
\begin{equation*}
\frac{R^2}{\eps}=C_2\frac{\lambda}{\zeta^2\beta^2}\ln^4(C_2\lambda/\zeta^2)
\end{equation*}
with $C_2=48\pi C_1^2$.

To estimate the variation of $\widehat{g}$ we use again \eqref{plancha} and the fact that $\widehat{\phi}$ is smooth and rapidly decreasing combined with Leibniz's rule:
\begin{equation*}
\var(\widehat{g})=\norm{\nabla\widehat{g}}_1\leqslant\pi\abs{y}\beta\norm{\widehat{\phi}}_1+\norm{\nabla\widehat{\phi}}_1\leqslant C_3\frac{\lambda}{\zeta^2\beta}\ln^4(C_2\lambda/\zeta^2)+C_4
\end{equation*}
where $C_3=\pi C_2C_\traj$ and $C_4=\norm{\nabla\widehat{\phi}}_1$. Also, \eqref{plancha} implies 
\begin{equation*}
\norm{\widehat{g}}_\infty\leqslant\beta^{-1}\norm{\widehat{\phi}}_\infty.
\end{equation*}
Finally we see that $\beta^{-1}\leqslant2\max((\sep-\sqrt{2}/2)^{-1},\sep)$ and thus parts $(iii)$ and $(iv)$ follow by taking $C=\max(C_2,2C_3,C_4,2\norm{\widehat{\phi}}_\infty)$.
\end{proof}

\begin{proposition}\label{prop_cube}
Let $\sep=(1+\eps)\sqrt{2}/2$ with $\eps\in(0,1)$, and $\traj=A^\sep$ or $\traj=O^\sep$. Then given $\zeta>0$, there exists $f\in L^2([-1/2,1/2]^2)$ such that
\begin{enumerate}
\item $\norm{f}_2=1$,

\item $\sep^{-1/2}\norm{\widehat{f}}_{L^2(\mu_\traj)}\leqslant\zeta$,

\item $\var(f)\leqslant C(\eps^{-1}\zeta^{-2}\ln^4(C\zeta^{-2})+1)$,

\item $\norm{f}_\infty\leqslant C\eps^{-1}$,
\end{enumerate}
where $C>0$ is a universal constant.
\end{proposition}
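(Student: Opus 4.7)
The plan is to reduce Proposition \ref{prop_cube} to Lemma \ref{fhat} via a rotation that places the bow-tie $Q \cup -Q$ inside the target square $[-1/2,1/2]^2$. A direct computation shows that the clockwise rotation $R := \Rr_{\pi/4}$ by $\pi/4$ sends $Q$ onto $[0,1/2]\times[-1/2,0]$ and $-Q$ onto $[-1/2,0]\times[0,1/2]$, so $R(Q \cup -Q) \subset [-1/2,1/2]^2$. Accordingly, I consider the rotated trajectory $R^{-1}\traj$: for $\traj = O^\sep$ this is just $O^\sep$ by rotation invariance, while for $\traj = A^\sep$ it is a rotated Archimedes spiral. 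In either case, Proposition \ref{class-spiral} applied with $R_0 = 2$ places $R^{-1}\traj \in \Cc(\sep, 3C', \bar{B}_2(0))$, which in the language of Lemma \ref{fhat} reads $R^{-1}\traj \in \Cc(\sep, \sep\lambda, \bar{B}_2(0))$ with $\lambda := 3C'/\sep$.

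Next I would apply Lemma \ref{fhat} to $R^{-1}\traj$ with this $\lambda$ and the given $\zeta$. This produces $g \in PW^2(Q \cup -Q)$ with $\norm{g}_2 = 1$, $\sep^{-1/2}\norm{g}_{L^2(\mu_{R^{-1}\traj})} \leqslant \zeta$, and quantitative control on $\var(\hat{g})$ and $\norm{\hat{g}}_\infty$. In the regime $\sep = (1+\eps)\sqrt{2}/2$ with $\eps \in (0,1)$, one has $\lambda \asymp 1$ and $\max((\sep-\sqrt{2}/2)^{-1},\sep) = \sqrt{2}/\eps$, so the estimates from Lemma \ref{fhat} specialise to $\var(\hat{g}) \lesssim \eps^{-1}\zeta^{-2}\ln^4(C\zeta^{-2}) + 1$ and $\norm{\hat{g}}_\infty \lesssim \eps^{-1}$.

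Finally, let $h$ be the function (in fact $h(x) = \hat{g}(-x)$) such that $g = \hat{h}$; then $h$ is supported in $Q \cup -Q$ and satisfies $\norm{h}_p = \norm{\hat{g}}_p$ and $\var(h) = \var(\hat{g})$. Setting
\[
f(x) := h(R^{-1}x),
\]
$f$ is supported in $R(Q \cup -Q) \subset [-1/2,1/2]^2$, and parts (i), (iii), (iv) of the proposition follow immediately from the rotation invariance of the $L^2$ norm, of total variation, and of the $L^\infty$ norm. For (ii), the Fourier transformation rule under the orthogonal change of coordinates $R$ gives $\hat{f}(\xi) = g(R^{-1}\xi)$, and combined with the rotation invariance of the one-dimensional Hausdorff measure this yields
\[
\int_\traj |\hat{f}(\xi)|^2 \, d\Hh^1(\xi) = \int_{R^{-1}\traj} |g(\eta)|^2 \, d\Hh^1(\eta) \leqslant \sep \zeta^2,
\]
which is exactly (ii). No single step is genuinely difficult; the only point that demands attention is the bookkeeping of the constants, in particular verifying that $\max((\sep-\sqrt{2}/2)^{-1},\sep)$ behaves like $1/\eps$ and that the parameter $\lambda$ from Lemma \ref{fhat} remains of order one, so that the bounds of Lemma \ref{fhat} translate to precisely those of Proposition \ref{prop_cube}.
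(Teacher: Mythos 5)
Your proof is correct and follows essentially the same route as the paper: apply Proposition \ref{class-spiral} to place a suitable rotation of the trajectory in the class $\Cc(\sep,\sep\lambda,\bar{B}_2(0))$ with $\lambda\asymp 1$, invoke Lemma \ref{fhat} to produce $g$, and then rotate the (Fourier-space) function into $[-1/2,1/2]^2$, transferring all four estimates via rotation invariance of the $L^2$ and $L^\infty$ norms, of total variation, and of $\Hh^1$. The only cosmetic difference is that you introduce $h$ with $g=\widehat h$ and set $f = h\circ R^{-1}$, whereas the paper works directly with $f=\widehat g\circ\Rr_{-\pi/4}$; since $Q\cup(-Q)$ is origin-symmetric these are equivalent.
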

\begin{proof}
By Proposition \ref{class-spiral}, $\Rr_{\pi/4}A^\sep\in\Cc(\sep,C,\bar{B}_2(0))$ for some constant $C>0$. Set $\lambda:=C\sep^{-1}$ so that $\Rr_{\pi/4}A^\sep\in\Cc(\sep,\sep\lambda,\bar{B}_2(0))$. Then using Lemma 
\ref{fhat}, we can construct $g\in L^2(\R^2)$ associated with $\Rr_{\pi/4}A^\sep$ and a given constant $\zeta>0$. Define $f:=\widehat{g}\circ\Rr_{-\pi/4}$. We will prove that $f$ satisfies the conditions $(i)-(iv)$ of the Proposition.

As in Lemma \ref{fhat}, let $Q$ be the cube defined by the vertices $(0,0)$, $(\sqrt{2}/4,\sqrt{2}/4)$, $(\sqrt{2}/2,0)$ and $(\sqrt{2}/4,-\sqrt{2}/4)$. Since $\supp(\widehat{g})\subset Q\cup-Q$ and
\begin{equation*}
\Rr_{-\pi/4}^{-1}(Q)=\Rr_{\pi/4}(Q)=[0,1/2]\times[0,-1/2],\quad \Rr_{-\pi/4}^{-1}(-Q)=[-1/2,0]\times[0,1/2],
\end{equation*}
we have
\begin{equation*}
\supp(f)\subset[0,1/2]\times[0,-1/2]\cup[-1/2,0]\times[0,1/2]\subset[-1/2,1/2]^2.
\end{equation*}
Point $(i)$ follows from the fact that rotations are norm-invariant transformations and $\norm{\widehat{g}}_2=1$. To see $(ii)$ note that $\widehat{f}=g\circ\Rr_{-\pi/4}^T$ and $\Rr_{-\pi/4}^T(A^\sep)=\Rr_{-\pi/4}^{-1}(A^\sep)=\Rr_{\pi/4}A^\sep$.

The variation of $f$ can be estimated by the chain rule and $(iii)$ of Lemma \ref{fhat}:
\begin{align*}
\var(f)&=\norm{\nabla f}_1=\norm{\nabla(\widehat{g}\circ\Rr_{-\pi/4})}_1\leqslant\norm{(\nabla\widehat{g})\circ\Rr_{-\pi/4}}_1\norm{\nabla \Rr_{-\pi/4}}_\infty
\\
&\lesssim\norm{\nabla\widehat{g}}_1=\var(\widehat{g})\lesssim\max((\sep-\sqrt{2}/2)^{-1},\sep)\frac{\lambda}{\zeta^2}\ln^4 \left(C\frac{\lambda}{\zeta^2}\right)+1.
\end{align*}
Then $(iii)$ follows by using the identities $\sep=(1+\eps)\sqrt{2}/2$ and $\lambda=C\sep^{-1}$ so that 
$\max((\sep-\sqrt{2}/2)^{-1},\sep)=(\sep-\sqrt{2}/2)^{-1}=\eps^{-1}$ and 
$\lambda\asymp\sep^{-1}\asymp\sep\asymp 1$. Since $\norm{f}_\infty=\norm{\widehat{g}}_\infty$, this 
argument also yields $(iv)$.
\end{proof}

\section{Compressibility and sub-Nyquist sampling}
\subsection{Analog compressibility}
We will now work on the cube $[-1/2,1/2]^2$. We recall the class $\Ff(W)$ defined in the introduction:
\begin{equation*}
\Ff(W):=\{f\in L^2([-1/2,1/2]^2):\var(f)\leqslant W\}.
\end{equation*}
The relevant {\it stability margin} is
\begin{equation*}
A(\traj,\Ff(W)):=\inf\{\norm{\hat{f}}_{L^2(\mu_\traj)}:\norm{f}_2=1\text{, }f\in\Ff(W)\}.
\end{equation*}
Let us now restate and prove Theorem B.
\begin{thmbv}
Let $\sep=(1+\eps)\sqrt{2}/2$ with $\eps\in(0,1)$, and $\traj=A^\sep$ or $\traj=O^\sep$. Then for $W>0$,
\begin{equation}
\label{eq_qwerty1}A(\traj,\Ff(W))\leqslant K (\eps W)^{-1/2}(\ln^2(\eps W)+1),
\end{equation}
where $K>0$ is a universal constant.
\end{thmbv}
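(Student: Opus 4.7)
The plan is to produce a single witness function in $\Ff(W)$ via Proposition \ref{prop_cube} and then optimize the auxiliary parameter $\zeta$. For any $\zeta>0$, Proposition \ref{prop_cube} yields $f\in L^2([-1/2,1/2]^2)$ satisfying $\norm{f}_2=1$, $\sep^{-1/2}\norm{\widehat f}_{L^2(\mu_\traj)}\leqslant \zeta$, together with the variation estimate $\var(f)\leqslant C(\eps^{-1}\zeta^{-2}\ln^4(C\zeta^{-2})+1)$. As soon as the right-hand side of this variation bound is at most $W$, we have $f\in\Ff(W)$, and using $\sep=(1+\eps)\sqrt{2}/2\leqslant\sqrt{2}$,
\[
A(\traj,\Ff(W))\leqslant \norm{\widehat f}_{L^2(\mu_\traj)}\leqslant \sep^{1/2}\zeta\leqslant 2^{1/4}\zeta.
\]

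I would then choose $\zeta$ so as to balance variation against Fourier-norm cost. The heuristic $\eps^{-1}\zeta^{-2}\ln^4(\zeta^{-2})\asymp W$ suggests setting
\[
\zeta := K_0\,\frac{\ln^2(\eps W)+1}{\sqrt{\eps W}}
\]
for a sufficiently large universal constant $K_0$. Substituting this $\zeta$ converts the variation bound into
\[
\eps^{-1}\zeta^{-2}\ln^4(C\zeta^{-2}) \;=\; \frac{W\,\ln^4(C\zeta^{-2})}{K_0^{2}\,(\ln^2(\eps W)+1)^2},
\]
so the task reduces to the auxiliary estimate $\ln^4(C\zeta^{-2})\lesssim (\ln^2(\eps W)+1)^2$ with a universal implicit constant. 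Writing $t:=\ln(\eps W)$, the chosen $\zeta$ gives $\ln(C\zeta^{-2})=t-2\ln(t^2+1)+O(1)$, and since the double-logarithmic correction is dominated by $|t|+1$ uniformly in $t\in\R$, the desired estimate holds. Thus, for $K_0$ large enough, $\var(f)\leqslant W$ holds whenever $W$ exceeds an absolute threshold $W_0$, and in that regime the inequality \eqref{eq_qwerty1} follows with $K\asymp K_0$.

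For the complementary regime $W<W_0$, the right-hand side of \eqref{eq_qwerty1} is bounded below by a positive constant that depends only on $W_0$ and $K$, so it is enough to exhibit any fixed competitor with bounded Fourier-sample norm. I would take $f_*:=\chi_{[-1/2,1/2]^2}$: from \eqref{eq_def_var}, compactly supported test fields $h\in C_c^1((-1/2,1/2)^2,\C^2)$ vanish on the boundary, hence $\var(f_*)=0$ so that $f_*\in\Ff(W)$ for every $W\geqslant 0$ with $\norm{f_*}_2=1$. Since $\widehat{f_*}\in PW^2(\bar B_{\sqrt{2}/2}(0))$, Proposition \ref{JM} together with $\sep\geqslant\sqrt{2}/2$ bounds $\norm{\widehat{f_*}}_{L^2(\mu_\traj)}$ by a universal constant, and enlarging $K$ to absorb this constant takes care of \eqref{eq_qwerty1} for $W<W_0$.

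The main technical obstacle is the logarithmic bookkeeping in the second paragraph: one must verify carefully that, under the chosen $\zeta$, the double-logarithm $\ln(\ln^2(\eps W)+1)$ is absorbed by $|\ln(\eps W)|+1$ uniformly in $\eps W>0$, so that $\ln^4(C\zeta^{-2})\lesssim (\ln^2(\eps W)+1)^2$ holds with a genuinely universal constant. One does this by splitting into the cases $|\ln(\eps W)|$ large and $|\ln(\eps W)|$ bounded; the remainder is a routine tracking of constants.
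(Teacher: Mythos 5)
Your proof is correct and takes essentially the same approach as the paper's. The paper parametrizes $\zeta=\eps^{-1/2}T^{-1/2}$, records $\var(f)\leqslant\Phi(T):=C(T\ln^4(C\eps T)+1)$, proves $T\gtrsim\Phi(T)/(\ln^4(\eps\Phi(T))+1)$ on a suitable range, and then inverts $W=\Phi(T)$; you instead guess $\zeta$ directly as an explicit function of $W$ and verify that the variation bound falls below $W$. These are equivalent reorganizations of the same optimization, and your small-$W$ treatment via the explicit competitor $f_*=\chi_{[-1/2,1/2]^2}$ and the Bessel bound matches the paper's (and is in fact slightly more explicit, since the paper leaves the existence of a unit-norm element of $\Ff(W)$ implicit). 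One bookkeeping point worth flagging: since $\zeta$ contains $K_0$, the quantity $\ln(C\zeta^{-2})$ carries a $-2\ln K_0$ term, so the auxiliary estimate $\ln^4(C\zeta^{-2})\lesssim(\ln^2(\eps W)+1)^2$ cannot hold with a constant that is genuinely independent of $K_0$; the implicit constant is of order $(1+\ln K_0)^4$. This is harmless because it is eventually dominated by the $K_0^{-2}$ arising from $\zeta^{-2}$ (so the final choice of a large universal $K_0$ is still legitimate), but your write-up should acknowledge the dependence rather than assert universality of that intermediate constant.
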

\begin{proof}
Let $T>0$, set $\zeta:=\eps^{-1/2}T^{-1/2}$ and take $f$ given by Proposition \ref{prop_cube} associated to $\zeta$. Then there exists a constant $C>0$ such that
\begin{equation*}
\var(f) \leqslant \Phi(T),
\end{equation*}
where 
\begin{equation*}
\Phi(T) := C (T\ln^4(C\eps T)+1).
\end{equation*}
Since $\norm{f}_2=1$, $\sep^{-1/2}\norm{\widehat{f}}_{L^2(\mu_\traj)}\leqslant\zeta$ and 
$\sep\asymp1$, we have that
\begin{equation}
\label{eq_qwerty2}
A(\traj,\Ff(\Phi(T)))\leqslant\norm{\widehat{f}}_{L^2(\mu_\traj)}\leqslant\sep^{1/2}\zeta\lesssim\eps^{-1/2}T^{-1/2}.
\end{equation}
We claim that
\begin{equation}
\label{eq_qwerty3}
T\gtrsim\frac{\Phi(T)}{\ln^4(\eps\Phi(T))+1},\quad\text{if}\quad T\geqslant C^{-1}\eps^{-1}e.
\end{equation}
Indeed, if $C\eps T\geqslant e$, then, since $\eps\in(0,1)$,
\begin{equation*}
CT\leqslant\Phi(T)\quad\text{and}\quad T\geqslant e/C.
\end{equation*}
Consequently,
\begin{equation*}
\Phi(T)\lesssim T\ln^4(C\eps T)+1\lesssim T\ln^4(\eps\Phi(T))+T
\end{equation*}
from which \eqref{eq_qwerty3} follows.
Combining \eqref{eq_qwerty2} and \eqref{eq_qwerty3}, we conclude that
\begin{equation}
\label{eq_qwerty4}A(\traj,\Ff(\Phi(T)))\lesssim\eps^{-1/2}\Phi(T)^{-1/2}(\ln^2(\eps\Phi(T))+1),
\end{equation}
provided that $T\geqslant C^{-1}\eps^{-1}e$.

Note that $\lim_{T \to +\infty}\Phi(T)=+\infty$, while $\Phi(C^{-1}\eps^{-1}e)=\eps^{-1}e+C$. This means that any number $W\geqslant\eps^{-1} e +C$ can be represented as $W=\Phi(T)$, for some $T\geqslant C^{-1}\eps^{-1}e$. Therefore, \eqref{eq_qwerty4} reads \eqref{eq_qwerty1} for all $W\geqslant\eps^{-1}e+C$.

Finally, if $W\leqslant\eps^{-1}e+C$, then $\eps W\lesssim 1$, and the right-hand side of \eqref{eq_qwerty1} is $\gtrsim 1$. On the other hand, the Bessel bound in Proposition \ref{JM}
implies that
\begin{equation*}
A(\traj,\Ff(W))\leqslant\norm{\widehat{f}}_{L^2(\mu_\traj)}\lesssim(\sep^{-1/2}+(1/2)^{1/2})\norm{\widehat{f}}_2\lesssim\norm{f}_2\lesssim1,
\end{equation*}
since $\sep\asymp1$. This completes the proof.
\end{proof}

\subsection{Sampling wavelet-sparse signals}
We work with the Haar basis in $L^2([-1/2,1/2]^2)$ constructed from the one in $L^2([0,1])$ by
translation and tensorization: from $h^0=\chi_{[0,1)}$ and $h^1=\chi_{[0,1/2)}-\chi_{[1/2,1)}$ one defines
\begin{equation*}
h^e_{j,k}(x)=2^{j/2}h^{e_1}(2^j(x_1+1/2)-k_1)2^{j/2}h^{e_2}(2^j(x_2+1/2)-k_2)
\end{equation*}
with $j\geqslant0$, $(k_1,k_2)\in\Z^2\cap 2^j[0,1)^2$, $e\in\{0,1\}^2\setminus\{(0,0)\}$. We denote $I$ the set of all such triples, and then define the sparsity classes as
\begin{equation*}
\Sigma_{N,J}:=\left\{\sum_{(j,k,e)\in I}c^e_{j,k}h^e_{j,k}:c^e_{j,k}\in\C,\#I\leqslant N\text{, }0\leqslant j\leqslant J\right\}
\end{equation*}
and the corresponding stability margin
\begin{equation*}
A_{N,J}(\traj):=\inf\left\{\norm{\widehat{f}}_{L^2(\mu_\traj)}:\norm{f}_2=1,f\in\Sigma_{N,J} \right\}.
\end{equation*}
Let us now restate and prove Theorem C.
\begin{thmwav}
Let $\sep=(1+\eps)\sqrt{2}/2$ with $\eps\in(0,1)$, and $\traj=A^\sep$ or $\traj=O^\sep$. Then for $N \geqslant 1$, 
\begin{equation}
\label{eq_anj}
A_{N,J}(\traj)\leqslant KN^{-1/6}\eps^{-1}\ln^4(K N^{1/3}),
\end{equation}
where $J=K\ln(\eps^{-1}N)$, and $K>0$ is a universal constant.
\end{thmwav}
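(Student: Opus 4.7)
The plan is to transfer Theorem~B to Haar sparsity by combining the extremal BV test function of Proposition~\ref{prop_cube} with a Jackson-type nonlinear approximation argument in the Haar basis. Fix a parameter $\zeta>0$ to be chosen at the end, and let $f$ be the function supplied by Proposition~\ref{prop_cube} for this $\zeta$, so that $f$ is supported in $[-1/2,1/2]^2$ with $\norm{f}_2=1$, $\sep^{-1/2}\norm{\widehat f}_{L^2(\mu_\traj)}\leqslant\zeta$, $V:=\var(f)\lesssim\eps^{-1}\zeta^{-2}\ln^4(C\zeta^{-2})+1$ and $\norm{f}_\infty\lesssim\eps^{-1}$. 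Let $f_J$ denote the truncation of the Haar expansion of $f$ to scales $\leqslant J$, and let $f_{N,J}\in\Sigma_{N,J}$ be the best $N$-term approximation of $f$ using Haar atoms at scales $\leqslant J$; orthonormality makes this the retention of the $N$ largest Haar coefficients inside $f_J$.

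The approximation error splits as $\norm{f-f_{N,J}}_2\leqslant\norm{f-f_J}_2+\norm{f_J-f_{N,J}}_2$, and I would estimate each coefficient $c^e_{j,k}=\scal{f,h^e_{j,k}}$ two ways: H\"older gives $|c^e_{j,k}|\leqslant\norm{f}_\infty\cdot 2^{-j}$, while the mean-zero property of $h^e_{j,k}$ combined with the scale-invariant two-dimensional BV--Poincar\'e inequality $\norm{f-f_{Q_{j,k}}}_{L^2(Q_{j,k})}\lesssim V(f,Q_{j,k})$ yields $|c^e_{j,k}|\lesssim V(f,Q_{j,k})$ on the dyadic cube $Q_{j,k}$ of side $2^{-j}$. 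Because $\{Q_{j,k}\}_k$ is an essentially disjoint cover at each scale, $\sum_{k,e}|c^e_{j,k}|\lesssim V$, and hence $\sum_{k,e}|c^e_{j,k}|^2\lesssim\norm{f}_\infty V\cdot 2^{-j}$; summation over $j>J$ produces $\norm{f-f_J}_2^2\lesssim\norm{f}_\infty V\cdot 2^{-J}$. Telescoping over $j\leqslant J$ gives $\sum_{j\leqslant J,k,e}|c^e_{j,k}|\lesssim(J+1)V$, so sorting yields $c^*_n\lesssim (J+1)V/n$ and $\norm{f_J-f_{N,J}}_2^2\leqslant\sum_{n>N}|c^*_n|^2\lesssim(J+1)^2V^2/N$.

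Since $f-f_{N,J}$ is supported in $[-1/2,1/2]^2\subset\bar B_{\sqrt2/2}(0)$, Proposition~\ref{JM} applied to $\widehat{f-f_{N,J}}\in PW^2(\bar B_{\sqrt2/2}(0))$ yields $\norm{\widehat{f-f_{N,J}}}_{L^2(\mu_\traj)}\lesssim\norm{f-f_{N,J}}_2$ (as $\sep\asymp 1$). The triangle inequality then gives
\[\norm{\widehat{f_{N,J}}}_{L^2(\mu_\traj)}\leqslant\norm{\widehat f}_{L^2(\mu_\traj)}+\norm{\widehat{f-f_{N,J}}}_{L^2(\mu_\traj)}\lesssim\zeta+\norm{f-f_{N,J}}_2,\]
and if $\norm{f-f_{N,J}}_2\leqslant 1/2$ then $\norm{f_{N,J}}_2\geqslant 1/2$, whence $A_{N,J}(\traj)\leqslant\norm{\widehat{f_{N,J}}}_{L^2(\mu_\traj)}/\norm{f_{N,J}}_2\lesssim\zeta+\norm{f-f_{N,J}}_2$. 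Forcing the scale and sparsity errors to be $\lesssim\zeta$ yields $J\gtrsim\log_2(\eps^{-1}\zeta^{-2})$ (modulo polylog absorption) and $N\gtrsim\eps^{-2}\zeta^{-6}(J+1)^2\ln^8(C\zeta^{-2})$; the choice $J=K\ln(\eps^{-1}N)$ together with $\zeta\sim N^{-1/6}\eps^{-1}\ln^4(CN^{1/3})$ satisfies both (the sharper $\eps^{-1/3}$ factor coming from the sparsity constraint is coarsened to $\eps^{-1}$ using $\eps\in(0,1)$), producing \eqref{eq_anj}.

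The main obstacle is the two-dimensional BV nonlinear-approximation bound $c^*_n\lesssim(J+1)V/n$: this $n^{-1}$ decay of sorted Haar coefficients -- a genuine consequence of BV-compressibility rather than a general $L^2$ property -- is what drives the characteristic $N^{-1/6}$ exponent once combined with $V\lesssim\eps^{-1}\zeta^{-2}\cdot\mathrm{polylog}$, while the $\eps^{-1}$ prefactor comes from transferring $\norm{f}_\infty\lesssim\eps^{-1}$ through the scale-truncation step. A secondary care is needed to ensure $\norm{f_{N,J}}_2\gtrsim 1$ survives the approximation, but this is automatic once the total error is kept below $1/2$, which holds throughout the regime of interest.
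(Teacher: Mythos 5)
Your overall strategy coincides with the paper's: feed the quantitative-aliasing test function from Proposition~\ref{prop_cube} into the stability margin, approximate it inside $\Sigma_{N,J}$, and close the argument via the triangle inequality and the Bessel bound of Proposition~\ref{JM}. The one genuine divergence is at the nonlinear-approximation step. The paper invokes the Cohen--DeVore--Petrushev--Xu theorem \cite[Thm.~8.2]{MR1738406} (giving the sharp bound $\|f-f_N\|_2\leqslant KN^{-1/2}\var(f)$) together with the $L^1$ truncation estimate \cite[Eq.~2.10]{MR1738406}, then interpolates with the $L^\infty$ bound. You instead re-derive a self-contained version from the scale-invariant $\mathrm{BV}$--Poincar\'e inequality, the $L^\infty$--H\"older bound, and scale-by-scale telescoping of the $\ell^1$ mass of Haar coefficients. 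Your argument is more elementary, but it loses a factor $(J+1)$: telescoping over $j\leqslant J$ gives only $\sum_{j\leqslant J}\sum_{k,e}|c^e_{j,k}|\lesssim(J+1)V$ and hence $c^*_n\lesssim(J+1)V/n$, whereas the genuine weak-$\ell^1$ content of \cite{MR1738406} is the global estimate $c^*_n\lesssim V/n$ with no log penalty, which requires a more careful argument across scales. You correctly compensate by setting $\zeta$ to be (up to constants) the full right-hand side of the theorem rather than the paper's bare $\zeta=N^{-1/6}$, so that the $\zeta$ term dominates and the log loss in the approximation term is absorbed; this costs some bookkeeping but does land the stated bound. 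Two minor remarks: (a) you build $f_{N,J}$ by truncating to scales $\leqslant J$ and then selecting the $N$ largest coefficients, whereas the paper first takes the unrestricted best $N$-term $f_N$ and then applies $P_J$; both are fine since $P_J$ contracts $L^2$, and your order is arguably cleaner. (b) You do not explicitly treat the small-$N$ regime where the constraint $N\gtrsim\eps^{-6}\,\mathrm{polylog}(N)$ fails and your chosen $\zeta$ would exceed~$1$; the paper closes this case by using the Bessel bound to get $A_{N,J}\lesssim1$, which suffices because the stated right-hand side is $\gtrsim1$ there. You should add that line to make the proof complete for all $N\geqslant1$.
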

\begin{proof}
Fix $N$ and let $\zeta=N^{-1/6}$ and take $f\in L^2([-1/2,1/2]^2)$ as in Proposition \ref{prop_cube}. Then 
\begin{gather}
\label{2}\norm{f}_2=1\quad\text{and}\quad\norm{\widehat{f}}_{L^2(\mu_\traj)}\lesssim N^{-1/6},
\\
\label{var}\var(f)\lesssim L:=\eps^{-1}N^{1/3}\ln^4(CN^{1/3})+1\quad\text{and}\quad\norm{f}_\infty\lesssim\eps^{-1}.
\end{gather}

\noindent {\it Step 1}. Let us consider the class
\begin{equation*}
\Sigma_N:=\left\{\sum_{(j,k,e)\in I}c^e_{j,k}h^e_{j,k}:c^e_{j,k}\in\C,\#I\leqslant N\right\}
\end{equation*}
of functions with at most $N$ active Haar coefficients {\it without} restrictions on the scale. Let 
$f_N$ the best approximation of $f$ in $\Sigma_N$. Since the variation of $f$ is bounded, we can use the following inequality from \cite[Theorem 8.2]{MR1738406}:
\begin{equation}
\label{f_N}\norm{f-f_N}_2\leqslant KN^{-1/2}\var(f)\lesssim N^{-1/2}L,
\end{equation}
where $K>0$ is some universal constant. Let $P_J$ be the orthogonal projection onto the span of wavelets with $0\leqslant j \leqslant J$. Then $\norm{f-P_Jf}_1\leqslant 2^{-J}\var(f)$ - see for example \cite[Eq. 2.10]{MR1738406} and the references therein, or \cite{MR1261635}. In addition, 
$\norm{f-P_Jf}_\infty\lesssim\norm{f}_\infty$, and therefore interpolation yields
\begin{equation*}
\norm{f-P_Jf}_2\lesssim2^{-J/2}\var(f)^{1/2}\norm{f}^{1/2}_\infty.
\end{equation*}
Plugging \eqref{var} we get
\begin{equation}
\label{P_Jf}\norm{f-P_Jf}_2\lesssim2^{-J/2}\eps^{-1}L^{1/2}\leqslant2^{-J/2}\eps^{-1}L
\end{equation}
where we have also used the fact that $L\geqslant1>\eps$. Now set $f_{N,J}:=P_Jf_N \in\Sigma_{N,J}$, 
and combine \eqref{P_Jf} and \eqref{f_N} to obtain
\begin{multline*}
\norm{f-f_{N,J}}_2\leqslant\norm{f-P_Jf}_2+\norm{P_Jf-P_Jf_N}_2
\\
\leqslant\norm{f-P_Jf}_2+\norm{f-f_N}_2\lesssim(2^{-J/2}\eps^{-1/2}+N^{-1/2})L.
\end{multline*}
Hence, choosing $J\asymp\ln(\eps^{-1}N)$,
\begin{equation}
\label{f_NJ}\norm{f-f_{N,J}}_2\lesssim N^{-1/2}L.
\end{equation}

\noindent {\it Step 2}. Write $A_{N,J}=A_{N,J}(\traj)$. Then, using \eqref{2} and the definition of $A_{N,J}$,
\begin{align}
\nonumber
A_{N,J}=A_{N,J}\norm{f}_2&\leqslant A_{N,J}\norm{f_{N,J}}_2+A_{N,J}\norm{f_{N,J}-f}_2
\\
\nonumber&\leqslant\norm{\widehat{f_{N,J}}}_{L^2(\mu_\traj)}+A_{N,J}\norm{f_{N,J}-f} _2
\\
\nonumber&\leqslant\norm{\widehat{f}}_{L^2(\mu_\traj)}+\norm{\widehat{f_{N,J}}-\widehat{f}}_{L^2(\mu_\traj)}+A_{N,J}\norm{f_{N,J}-f}_2
\\
&\leqslant K_0N^{-1/6}+K_0\norm{f_{N,J}-f}_2+A_{N,J}\norm{f_{N,J}-f}_2,\label{AN}
\end{align}
for some constant $K_0>0$ and where we have also applied Proposition \ref{JM} to estimate $\norm{\widehat{f_{N,J}}-\widehat{f}}_{L^2(\mu_\traj)}$. Now from \eqref{f_NJ}, if
\begin{equation*}
N\gtrsim L^2,
\end{equation*}
and the implicit constant is large enough, then $\norm{f-f_{N,J}}_2<1/2$. Going back to \eqref{AN} and re-applying \eqref{f_NJ} we get
\begin{equation*}
A_{N,J}\lesssim N^{-1/6}+N^{-1/2}L\lesssim N^{-1/6}\eps^{-1}\ln^4(C N^{1/3}).
\end{equation*}
Note that since $L=\eps^{-1}N^{1/3}\ln^4(CN^{1/3})+1$, then, for $N\gtrsim L^2$ to hold it is sufficient to have $N\gtrsim\eps^{-2}N^{2/3}\ln^8(CN^{1/3})$, or, equivalently,
\begin{equation*}
N\gtrsim\eps^{-6}\ln^{24}(CN^{1/3}).
\end{equation*}
Hence, there is a constant $C'>0$ such that \eqref{eq_anj} holds if
$N\geqslant C'\eps^{-6}\ln^{24}(C'N^{1/3})$. On the other hand, if $N\leqslant C'\eps^{-6}\ln^{24}(C'N^{1/3})$ then to prove \eqref{eq_anj} is enough to show $A_{N,J}\lesssim1$, which, as in the proof of Theorem B, follows from the Bessel bounds in Proposition \ref{JM}.
\end{proof}

\section*{Acknowledgments}
Ph. J. kindly acknowledges financial support from the French ANR program, ANR-12-BS01-0001 (Aventures), the French-Tunisian CMCU/UTIQUE project 32701UB Popart. This study has been carried out with financial support from the French State, managed by the French National Research Agency (ANR) in the frame of the Investments for the Future Program IdEx Bordeaux - CPU (ANR-10-IDEX-03-02). F.N. is supported by the doctoral grant POS-CFRA-2015-1-125008 of Agencia Nacional de Innovación e Investigación (Uruguay) and Campus France (France). F.N. would like to thank Andreas Hartmann and Karim Kellay for helpful discussions about sampling measures.  J. L. R. gratefully acknowledges support from the Austrian Science Fund (FWF):P 29462-N35, and from the WWTF grant INSIGHT (MA16-053). The three authors kindly acknowledge support from the Austrian-French AMADEUS project 35598VB - ChargeDisq.

\bibliographystyle{abbrv}

\end{document}